\newtheorem{theorem}{Theorem}[section]
\newtheorem{lemma}[theorem]{Lemma}
\newtheorem{corollary}[theorem]{Corollary}
\newtheorem{proposition}[theorem]{Proposition}
\theoremstyle{definition}
\theoremstyle{remark}
\numberwithin{equation}{section}
\def\girth{\operatorname{girth}}
\def\id{\operatorname{id}}
\def\im{\operatorname{im}}
\def\ker{\operatorname{ker}}
\def\red{\operatorname{red}}
\def\vup{\nu_{\uparrow}}
\def\vdown{\nu_{\downarrow}}
\def\rank{\operatorname{rank}}
\def\Tor{\operatorname{Tor}}
\def\co{\colon\thinspace}
\definecolor{lsupurple}{RGB}{70,29,124}
\definecolor{lsugold}{RGB}{253,208, 35}
\begin{document}

\title{Chromatic homology, Khovanov homology, and torsion }

\author{Adam M. Lowrance}
\address{Department of Mathematics and Statistics\\
Vassar College\\
Poughkeepsie, NY} 
\email{adlowrance@vassar.edu}

\author{Radmila Sazdanovi\'{c}}
\address{Department of Mathematics\\
North Carolina State University\\
Raleigh, NC}
\email{rsazdanovic@math.ncsu.edu}

\subjclass{}
\date{}

\begin{abstract}
In the first few homological gradings, there is an isomorphism between the Khovanov homology of a link and the categorification of the chromatic polynomial of a graph related to the link. In this article, we show that all torsion in the categorification of the chromatic polynomial is of order two, and hence all torsion in Khovanov homology in the gradings where the isomorphism is defined is of order two. We also prove that odd Khovanov homology is torsion-free in its first few homological gradings.
\end{abstract}

\maketitle

\section{Introduction} Khovanov homology is a categorification of the Jones polynomial  constructed by Khovanov in \cite{Kh:CategorificationJones}. The Khovanov homology $Kh(L)$ of a link $L$ is a finitely generated, bigraded abelian group. Experimental computations show that Khovanov homology frequently has nontrivial torsion. Torsion of order two has been studied extensively by Shumakovitch \cite{Shumakovitch:Torsion}, Asaeda and Przytycki \cite{AP:TorsionThickness}, Pabinak, Przytycki, and Sazdanovi\'{c} \cite{PPS:First}, and Przytycki and Sazdanovi\'{c} \cite{PS:Semiadequate}. Shumakovitch conjectures that the Khovanov homology of every link except disjoint unions and connected sums of unknots and Hopf links contains torsion of order two. This conjecture has been confirmed for many special cases, including alternating links and many semi-adequate links.

Much less is known about torsion of order not equal to two. Computations by Bar-Natan \cite{BN:FastKh} show that the Khovanov homology of the $(4,q)$ torus knot for $q=5, 7, 9,$ or $11$ contains torsion of order four.  Further computer computations by Bar-Natan and Green \cite{BNG:JavaKh} show that torus knots of higher braid index can have odd torsion. In a recent paper \cite{MPSWY:KhTorsion}, Mukherjee, Przytycki, Silvero, Wang, and Yang give many more examples of links whose Khovanov homology contains torsion of odd order. Experimental computations show that among knots with few crossings, knots with torsion of orders other than two in their Khovanov homology are less common than knots whose Khovanov homology contains only torsion of order two.

Theorem \ref{theorem:KhTorsion} gives a partial explanation to this observation, at least in the first few and last few homological gradings of Khovanov homology. The {\em all-$A$ state graph} $G_A(D)$ of $D$ is a graph obtained from the all-$A$ Kauffman state of $D$. See Section \ref{section:background} for more on Kauffman states and Section \ref{section:mainresults} for the precise construction of state graphs. The {\em girth} of $G_A(D)$ is the length of the shortest cycle in $G_A(D)$. See Figure \ref{figure:CrossingSign} for our conventions on positive and negative crossings.

\begin{theorem}
\label{theorem:KhTorsion}
Let $D$ be a link diagram with $c_-$ negative crossings such that its all-$A$ state graph $G_A(D)$ has  girth $g$ of at least two. If $-c_- \leq i \leq -c_- + g-1$, then all torsion in $Kh^{i,j}(D)$ is of order two.
\end{theorem}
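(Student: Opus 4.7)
The plan is to reduce Theorem \ref{theorem:KhTorsion} to the analogous statement about chromatic graph homology of $G_A(D)$, which the abstract identifies as the principal technical contribution of the paper.

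First, I set up the Khovanov cochain complex with its standard homological shift by $-c_-$, so that the all-$A$ Kauffman state lives in homological grading $-c_-$ and the states obtained by switching $k$ resolutions from all-$A$ occupy homological grading $-c_- + k$. Thus $Kh^{i,j}(D)$ for $-c_- \leq i \leq -c_- + g - 1$ is computed from the portion of the cochain complex involving only the all-$A$ state and states obtained by at most $g-1$ resolution switches.

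Second, I invoke the isomorphism, due to Przytycki and made precise in the Helme-Guizon--Rong framework, between Khovanov homology and chromatic graph homology in low homological gradings. The crossings of $D$ correspond bijectively to the edges of $G_A(D)$, and the Khovanov Frobenius algebra $V = \mathbb{Z}[X]/(X^2)$ is also the Frobenius algebra defining the chromatic graph homology $H^{*,*}(G_A(D))$. The girth hypothesis $g \geq 2$ rules out loops in $G_A(D)$, and for $k < g$ the edges switched from all-$A$ span a forest of $G_A(D)$, so every such switch merges two state circles rather than splits one. Consequently the Khovanov differentials in this range match the chromatic differentials under the natural identification of tensor factors with state circles, yielding a bigraded isomorphism $Kh^{-c_- + k,\, j}(D) \cong H^{k,\, j'}(G_A(D))$ for $0 \leq k \leq g - 1$ and an appropriate quantum grading shift $j' = j'(j,D)$.

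Third, the theorem then follows by combining this isomorphism with the main structural result of the paper: that all torsion in $H^{*,*}(G)$ is of order two for every graph $G$. The main difficulty in this strategy is verifying the isomorphism throughout the full range $0 \leq k \leq g - 1$, and in particular checking that cycles of length exactly $g$ in $G_A(D)$ do not yet contribute any Khovanov-only (circle splitting) differentials into homological grading $-c_- + g - 1$. Beyond this, the work amounts to careful bookkeeping of sign conventions and quantum grading shifts.
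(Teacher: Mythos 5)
This is essentially the paper's proof: Theorem \ref{theorem:ChromaticKhovanov} (Helme-Guizon--Przytycki--Rong) supplies exactly the comparison with chromatic homology of $G_A(D)$ that you describe, and Theorem \ref{theorem:ChromaticTorsion} then finishes the argument. The one adjustment needed is at the top grading $i=-c_-+g-1$, where the full bigraded isomorphism you posit does not hold (cycles of length $g$ do contribute splitting differentials out of that grading); however, the cited result still provides an isomorphism of torsion subgroups in that grading, which is all the theorem requires.
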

Since $Kh^{i,j}(D)=0$ when $i<-c_-$, the gradings where Theorem \ref{theorem:KhTorsion} apply are the first few homological gradings where the Khovanov homology is nonzero. An analogous statement holds for the last few homological gradings of $Kh(D)$. Let $g'$ be the girth of the all-$A$ state graph of the mirror of $D$ (or equivalently of the all-$B$ state graph of $D$). Theorem \ref{theorem:KhTorsion} and the relationship between the Khovanov homology of a link and its mirror then implies that all torsion in $Kh^{i,j}(D)$ is of order two when $c_+ - g'+1 \leq i \leq c_+$, where $c_+$ is the number of positive crossings in $D$. Moreover $Kh^{i,j}(D) = 0$ if $i> c_+$.
\begin{figure}[h]
$$\begin{tikzpicture}[scale=.8, >=triangle 45]
\draw[->] (-1,-1) -- (1,1);
\draw[->] (-.25,.25) -- (-1,1);
\draw (.25,-.25) -- (1,-1);
\draw (0,-1.5) node{\Large{$+$}};

\begin{scope}[xshift = 5cm]
\draw [->] (1,-1) -- (-1,1);
\draw [->] (.25,.25) -- (1,1);
\draw (-.25,-.25) --(-1,-1);
\draw (0,-1.5) node{\Large{$-$}};
\end{scope}
\end{tikzpicture}$$
\caption{Positive and negative crossings in a link diagram.}
\label{figure:CrossingSign}
\end{figure}
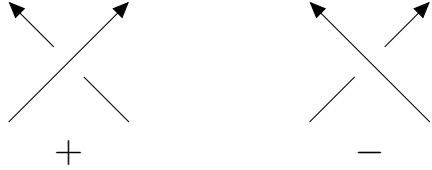

Ozsv\'ath, Rasmussen, and Szab\'o \cite{ORS:OddKh} define the odd Khovanov homology $Kh_{\text{odd}}(L)$ of a link $L$, a categorification of the Jones polynomial that agrees with Khovanov homology with $\mathbb{Z}_2$ coefficients, but differs with $\mathbb{Z}$ coefficients. In Section \ref{section:oddKh}, we prove a version of Theorem \ref{theorem:KhTorsion} for odd Khovanov homology.
\begin{theorem}
\label{theorem:OddTorsionFree}
Let $D$ be a link diagram with $c_-$ negative crossings such that its all-$A$ state graph $G_A(D)$ is planar and has girth $g$ of at least two. If $-c_-\leq i \leq -c_- + g-1$, then $Kh_{\text{odd}}^{i,j}(D)$ has no torsion.
\end{theorem}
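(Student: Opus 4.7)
The plan is to follow the template of Theorem \ref{theorem:KhTorsion} and identify the truncated odd Khovanov chain complex $C^{i,j}_{\text{odd}}(D)$ for $-c_- \leq i \leq -c_- + g - 1$ with a combinatorial complex built directly from $G_A(D)$, and then show that this combinatorial complex has torsion-free homology.

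The states of $D$ contributing to chain groups in this homological range are indexed by subsets $E' \subseteq E(G_A(D))$ with $|E'| \leq g - 1$. Since $G_A(D)$ has girth $g$, every such $E'$ is a forest, and so the associated Kauffman smoothing is obtained from the all-$A$ smoothing by $|E'|$ merges with no splits. In the Ozsv\'ath-Rasmussen-Szab\'o construction, splits are the source of the exterior-algebra sign contributions that distinguish odd from even Khovanov homology over $\mathbb{Z}$, while merges simply identify generators of $\Lambda^* V(s)$ up to an overall sign; hence in our range the differentials take a particularly simple form.

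Planarity of $G_A(D)$ enters in the next step: it allows a coherent orientation of the chord diagrams of the all-$A$ state compatible with the planar embedding, yielding a canonical sign assignment for all merge maps throughout the truncated complex. With such an identification in place, the truncated odd Khovanov complex becomes the low-grading portion of an ``odd chromatic'' complex on $G_A(D)$, whose chain groups $\bigoplus_{|E'|=k}\Lambda^* V(s_{E'})$ are free abelian. I would prove torsion-freeness of its homology either by a filtration argument (filtering by the number of components of the partially contracted smoothing) or by a deletion-contraction style long exact sequence over the poset of forest subgraphs of $G_A(D)$, reducing the problem to a base case where $G_A(D)$ is itself a forest and the homology is manifestly free.

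The main obstacle will be the sign analysis: verifying carefully that planarity does rigidify all signs in the odd Khovanov differential on this subcomplex, and that the ``odd chromatic'' complex admits a resolution without torsion in the homological range under consideration. This is precisely why we must assume planarity rather than only girth at least two: without a planar embedding, globally consistent sign choices for merge maps can fail, leaving room for residual torsion even though the underlying edge subsets are forests. Once the sign convention is pinned down and shown to be compatible with $Kh_{\text{odd}}$, torsion-freeness will follow directly because the complex is then built from exterior powers of free $\mathbb{Z}$-modules with signed identifications that split cleanly over $\mathbb{Z}$.
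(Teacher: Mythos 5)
Your proposal has a genuine gap at its crucial step, and it also misidentifies the role of planarity. The paper's actual argument is much shorter: Theorem \ref{theorem:oddKhribbon} shows that \emph{any} two diagrams with isomorphic all-$A$ state graphs of girth at least two have isomorphic odd Khovanov homology in the gradings $-c_-\leq i\leq -c_-+g-1$ (the sign-assignment issue is handled by noting that the truncated hypercube with its squares filled in is contractible, so the ratio of two sign assignments is a coboundary); planarity of $G_A(D)$ is then used only to produce an \emph{alternating} diagram $D'$ with $G_A(D')\cong G_A(D)$, and the torsion-freeness is imported wholesale from the Ozsv\'ath--Rasmussen--Szab\'o theorem that alternating links have torsion-free odd Khovanov homology. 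Your claim that planarity is needed because ``globally consistent sign choices for merge maps can fail'' otherwise is not correct: the coherence of signs on the truncated cube has nothing to do with planarity of the state graph, since the relevant obstruction lives in $H^1$ of a contractible complex for any $G_A(D)$.

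The more serious problem is that the torsion-freeness of your proposed ``odd chromatic'' complex is asserted rather than proved, and the two strategies you offer cannot close the argument as stated. A deletion--contraction long exact sequence whose outer terms are free abelian does not force the middle term to be free: homology groups arise as quotients $\ker/\operatorname{im}$, and the cokernel of a map of free abelian groups can have torsion. Indeed, the identical inductive scheme --- deletion--contraction with free base case on forests --- applies verbatim to the \emph{even} chromatic homology, which by Theorem \ref{theorem:ChromaticTorsion} does carry $2$-torsion in exactly these gradings. So your induction must isolate what is special about the odd construction (the exterior-algebra merges) that kills the $2$-torsion present in the even theory, and the proposal never identifies this. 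There is also a smaller issue at the top of the range: to compute homology in grading $i=-c_-+g-1$ you need the differential into height $g$ of the cube, where an edge subset can close a $g$-cycle and the edge map is a split ($\Delta_{\text{odd}}$), not a merge, so your description of the truncated complex as ``merges only'' does not cover all the maps you need. Absent a correct proof that the truncated odd complex has free homology --- which is essentially the content the paper obtains for free from the alternating case --- the proposal does not establish the theorem.
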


Helme-Guizon and Rong \cite{HGR:Chromatic} define a categorification of the chromatic polynomial of a graph $G$, which we call the {\em chromatic homology} of $G$. One can view this theory as a comultiplication-free version of Khovanov homology. The Khovanov homology of a link and the chromatic homology of the all-$A$ state graph of a diagram of the link are isomorphic in certain bigradings. Theorem \ref{theorem:KhTorsion} is a consequence of this relationship together with the following result on chromatic homology.

\begin{theorem}
\label{theorem:ChromaticTorsion}
All torsion in the chromatic homology of a graph is of order two.
\end{theorem}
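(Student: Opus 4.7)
The plan is to prove Theorem \ref{theorem:ChromaticTorsion} by induction on the number of edges of $G$, using a deletion-contraction short exact sequence of chromatic chain complexes together with a chain-level map to rule out higher $2$-power torsion.

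The base case $|E(G)| = 0$ is immediate: $C^*(G;\mathbb{Z}) = A^{\otimes |V(G)|}$ is concentrated in homological degree zero and is free abelian, so has no torsion. For the inductive step, pick any edge $e \in E(G)$. Separating the cube of states of $G$ by whether $e$ is included produces a short exact sequence of chromatic chain complexes
$$ 0 \to C^*(G/e;\mathbb{Z})\{1\}[1] \to C^*(G;\mathbb{Z}) \to C^*(G-e;\mathbb{Z}) \to 0, $$
with the standard modifications when $e$ is part of a multi-edge or when contraction creates a loop. Both $G-e$ and $G/e$ have strictly fewer edges than $G$, so the inductive hypothesis says their chromatic cohomologies have torsion only of order two.

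The resulting long exact sequence presents each $H^i(G;\mathbb{Z})$ as an extension of a subgroup of $H^i(G-e;\mathbb{Z})$ by a quotient of $H^{i-1}(G/e;\mathbb{Z})$. Subgroups of a group with only $\mathbb{Z}/2$-torsion retain that property, but quotients do not: for example $(\mathbb{Z}\oplus\mathbb{Z}/2)/\langle (2,1)\rangle \cong \mathbb{Z}/4$, so the cokernel of the connecting map could in principle carry $\mathbb{Z}/4$-summands. Moreover, even if the cokernel has only $\mathbb{Z}/2$-torsion, the extension problem itself can produce a $\mathbb{Z}/4$-summand in the middle. The main obstacle is to rule out both failures simultaneously.

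The chain-level tool I would use is the endomorphism $X \co C^*(G;\mathbb{Z}) \to C^*(G;\mathbb{Z})$ defined as left multiplication by $x \in A$ on the tensor factor indexed by the component of $G[s]$ containing a fixed basepoint vertex $v_0$. A direct check using $m(xc_1 \otimes c_2) = x \cdot m(c_1 \otimes c_2)$ in the $v_0$-factor shows that $X$ is a chain map satisfying $X^2 = 0$ that is natural with respect to the deletion-contraction short exact sequence above. Paired with the Bockstein sequence $0 \to \mathbb{Z} \xrightarrow{2} \mathbb{Z} \to \mathbb{Z}/2 \to 0$, I expect to use $X$ to construct explicit $\mathbb{Z}/4$-integral lifts of mod-$2$ cohomology classes at the chain level, thereby showing that the secondary Bockstein $\beta_2$ vanishes on $H^*(G;\mathbb{Z}/2)$; combined with the inductive vanishing of higher Bocksteins inherited from the outer terms, this collapses the Bockstein spectral sequence at $E_2$ and forces every torsion class in $H^*(G;\mathbb{Z})$ to have order exactly two, not merely to lie in the $2$-primary part. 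The delicate point will be constructing these $\mathbb{Z}/4$-lifts cleanly, and tracking the naturality of $X$ through contractions that introduce loops, which is where I expect most of the chain-level bookkeeping to live.
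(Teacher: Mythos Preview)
Your proposal has two genuine gaps.

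\textbf{Odd torsion is not handled.} The $\mathbb{Z}_2$-Bockstein spectral sequence only detects $2$-primary torsion; its collapse at $E_2$ says nothing about torsion of odd order. Your claim that it ``forces every torsion class \ldots\ to have order exactly two, not merely to lie in the $2$-primary part'' is incorrect. Nor does the deletion-contraction induction save you: as you yourself note, quotients of groups with only $\mathbb{Z}/2$-torsion need not have only $\mathbb{Z}/2$-torsion, and the same failure lets odd torsion appear --- for instance $\mathbb{Z}/3\mathbb{Z}$ is a quotient of $\mathbb{Z}$. The paper handles odd torsion by a completely separate argument (a Lee-type spectral sequence over $\mathbb{Z}_p$ for odd $p$, giving a knight-move decomposition that matches the rational one and forces $\dim H(G;\mathbb{Z}_p)=\dim H(G;\mathbb{Q})$); you would need something comparable.

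\textbf{The $2$-primary step is only a sketch, and the sketched mechanism is not the one that works.} You correctly identify that the crux is showing the secondary Bockstein vanishes, but you have not done it; ``I expect to use $X$ to construct explicit $\mathbb{Z}/4$-lifts'' is a hope, not an argument. Your map $X$ is essentially the paper's $\vup$, but by itself $\vup$ does not control $\beta$. The paper needs three more ingredients you do not have: the dual map $\vdown$ (sum over relabelings $x\mapsto 1$), the Turner differential $d_T$ (the $\mathbb{Z}_2$ analogue of the Lee perturbation), and the chain-level identity $d_T^*=\beta\circ\vdown^*+\vdown^*\circ\beta$. It is this identity, combined with the two-diagonal support of $H(G;\mathbb{Z}_2)$ (which makes $\vdown^*$ an isomorphism between the diagonals) and a rank computation for $d_T^*$ coming from the $(d+d_T)$-homology, that pins down $\rank\beta$ exactly and forces $E_2=E_\infty$. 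Your inductive framework with naturality of $X$ under deletion-contraction does not give access to any of this structure, and ``inductive vanishing of higher Bocksteins inherited from the outer terms'' is not a thing: Bockstein differentials do not pass cleanly through long exact sequences, precisely because of the extension and cokernel problems you already flagged.
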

Theorem \ref{theorem:ChromaticTorsion} is proved in two major steps. First, we show that chromatic homology cannot have torsion of odd order by examining a version of Lee's spectral sequence \cite{Lee:Endo} for chromatic homology with $\mathbb{Z}_p$ coefficients where $p$ is an odd prime. Next, we prove that the only torsion of order $2^k$ in chromatic homology is in fact of order two. In order to achieve this second step, we define new maps $\vdown$, $\vup$, and $d_T$ on the chromatic complex with $\mathbb{Z}_2$ coefficients, each of which induces a map on chromatic homology with $\mathbb{Z}_2$ coefficients. These maps are inspired by similar maps on the Khovanov complex with $\mathbb{Z}_2$ coefficients defined by Shumakovitch \cite{Shumakovitch:Torsion} and Turner \cite{Turner:KhTwo}. We relate the induced maps to the differential in the $\mathbb{Z}_2$-Bockstein spectral sequence for chromatic homology to prove the desired result.

Chmutov, Chmutov, and Rong \cite{CCR:Knight} show that the chromatic homology of a graph $G$ with rational coefficients is determined by the chromatic polynomial of $G$. The following theorem generalizes their result to integer coefficients.

\begin{theorem}
\label{theorem:ChromaticPoly}
The chromatic homology of a graph $G$ with integer coefficients is determined by the chromatic polynomial of $G$.
\end{theorem}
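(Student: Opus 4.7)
The plan is to combine Theorem \ref{theorem:ChromaticTorsion}, the Chmutov--Chmutov--Rong theorem, and the universal coefficient theorem. By Theorem \ref{theorem:ChromaticTorsion}, every torsion element of chromatic homology has order two, so in each bigrading
$$H^{i,j}(G;\mathbb{Z}) \cong \mathbb{Z}^{b_{i,j}} \oplus (\mathbb{Z}_2)^{t_{i,j}},$$
and the theorem reduces to expressing both $b_{i,j}$ and $t_{i,j}$ as functions of the chromatic polynomial $P_G(\lambda)$. The rational Betti numbers $b_{i,j}$ are already given by CCR, so only the torsion ranks $t_{i,j}$ need to be pinned down.

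I would extract the $t_{i,j}$ from the UCT identity
$$\dim_{\mathbb{Z}_2} H^{i,j}(G;\mathbb{Z}_2) \;=\; b_{i,j} \;+\; t_{i,j} \;+\; t_{i+1,j}.$$
Starting from the top homological grading, where the chromatic complex itself vanishes and therefore $t_{i,j}=0$, the $t_{i,j}$ can be solved for by downward induction on $i$, provided the left-hand side is known. The outstanding task is therefore to show that $\dim_{\mathbb{Z}_2} H^{i,j}(G;\mathbb{Z}_2)$ is itself determined by $P_G(\lambda)$. I propose to do this by rerunning CCR's strategy in characteristic two: invoke the edge-deletion/contraction long exact sequence for chromatic homology---which exists verbatim over $\mathbb{Z}_2$---and induct on the number of edges of $G$, with the empty graph as the transparent base case.

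The main obstacle is that CCR's inductive step leans on a knight-move-type cancellation that is automatic over $\mathbb{Q}$ but must be verified in characteristic two. To cross this gap I would exploit the maps $\vup$, $\vdown$, and $d_T$ on the $\mathbb{Z}_2$-chromatic complex introduced earlier in the paper: they provide a characteristic-two Lee-style deformation whose associated spectral sequence should collapse at a page with $E_\infty$ visibly governed by $P_G(\lambda)$. Verifying this collapse, and checking that no unexpected generators survive in $E_\infty$ beyond those already accounted for in the rational case, is the heart of the argument. Once the $\mathbb{Z}_2$-Betti numbers are known to be functions of $P_G(\lambda)$, the UCT identity above recovers the $t_{i,j}$, and Theorem \ref{theorem:ChromaticPoly} follows.
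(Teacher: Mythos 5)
Your reduction is the same as the paper's: by Theorem \ref{theorem:ChromaticTorsion} all torsion is $\mathbb{Z}_2$, the free ranks $b_{i,j}$ come from Chmutov--Chmutov--Rong, and the universal coefficient identity $\dim_{\mathbb{Z}_2}H^{i,j}(G;\mathbb{Z}_2)=b_{i,j}+t_{i,j}+t_{i+1,j}$ recovers the torsion ranks once the $\mathbb{Z}_2$-Betti numbers are known. The problem is that you leave exactly the step you call ``the heart of the argument'' --- showing that $\dim_{\mathbb{Z}_2}H^{i,j}(G;\mathbb{Z}_2)$ is determined by $P_G(\lambda)$ --- as an unverified plan, and the route you sketch for it (rerunning the deletion--contraction induction over $\mathbb{Z}_2$ and establishing a collapse of the $d_T$-spectral sequence) is both heavier than necessary and not obviously self-contained: a collapse result of that type only controls total ranks along each diagonal, and to distribute dimensions between the two diagonals in each bigrading you would end up invoking the vertical isomorphism anyway (this is precisely what happens in the proof of Lemma \ref{lemma:TurnerRank}).

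The missing ingredient is already available earlier in the paper and closes the gap in one line. Lemma \ref{lemma:VerticalIso} says $\vdown^*\co H^{i,n-i}(G;\mathbb{Z}_2)\to H^{i,n-i-1}(G;\mathbb{Z}_2)$ is an isomorphism, and Theorem \ref{theorem:ThinChromatic} confines the homology to the diagonals $i+j\in\{n,n-1\}$ with torsion only on $i+j=n$. Writing $T_i=t_{i,n-i}$, the universal coefficient theorem gives $\dim H^{i,n-i}(G;\mathbb{Z}_2)=b_{i,n-i}+T_i$ and $\dim H^{i,n-i-1}(G;\mathbb{Z}_2)=b_{i,n-i-1}+T_{i+1}$, so the vertical isomorphism yields the recursion $T_i=b_{i,n-i-1}-b_{i,n-i}+T_{i+1}$, which determines every $T_i$ from the rational Betti numbers (and simultaneously shows the $\mathbb{Z}_2$-Betti numbers are determined by $P_G$, rather than requiring them as input). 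This is exactly the paper's proof, phrased there as: each knight move pair of Theorem \ref{theorem:CCRKnight} becomes a tetromino over $\mathbb{Z}_2$ and hence the integral configuration of Figure \ref{figure:tetromino}, with $\mathbb{Z}_{2^k}=\mathbb{Z}_2$ by Theorem \ref{theorem:ChromaticTorsion}. So your outline is salvageable, but as written it defers the decisive step to a new induction that you have not carried out and do not need.
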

The version of chromatic homology studied in this paper uses the algebra $\mathcal{A}_2=\mathbb{Z}[x]/(x^2)$. Helme-Guizon and Rong \cite{HR:Arbitrary} show that if an algebra $\mathcal{A}$ satisfies certain mild conditions, then there is a categorification of the chromatic polynomial associated to $\mathcal{A}$. Pabinak, Przytycki, and Sazdanovi\'{c} \cite{PPS:First} show that Theorem \ref{theorem:ChromaticPoly} fails for algebras other than $\mathcal{A}_2$; in particular, it fails for the algebra $\mathcal{A}_3=\mathbb{Z}[x]/(x^3)$.

This article is organized as follows. In Section \ref{section:background}, we recall the constructions of Khovanov homology and chromatic homology. In Section \ref{section:odd}, we prove that chromatic homology has no torsion of odd order. In Section \ref{section:even}, we show that the only possible torsion in chromatic homology is of order two. We also show Theorem \ref{theorem:ChromaticPoly}. In Section \ref{section:mainresults}, we define state graphs, recall the connection between Khovanov and chromatic homology, and prove Theorem \ref{theorem:KhTorsion}. In Section \ref{section:oddKh}, we recall the definition of odd Khovanov homology and prove Theorem \ref{theorem:OddTorsionFree}.\medskip

\noindent{\bf Acknowledgement:} The authors are thankful for helpful conversations with and guidance from both Alex Shumakovitch and John McCleary. Both authors are partially supported by Simons Collaboration Grants.

\section{Background}
\label{section:background}

In this section, we review the definitions of Khovanov homology and chromatic homology. We present both Bar-Natan's cube-of-resolution approach \cite{BN:Kh} and Viro's enhanced state approach \cite{Viro:Kh}. 

\subsection{Khovanov homology}

Let $R$ be a commutative ring with identity. Usually $R$ will be the integers $\mathbb{Z}$, the rationals $\mathbb{Q}$, or $\mathbb{Z}_p$, the integers modulo $p$. A bigraded $R$-module $M$ is an $R$-module with a direct sum decomposition $M=\bigoplus_{i,j\in\mathbb{Z}} M^{i,j}$ where the summand $M^{i,j}$ is said to have bigrading $(i,j)$. Equivalently, an $R$-module is bigraded if a bigrading $(i,j)$ is assigned to each element in a chosen basis. If $M=\bigoplus_{i,j\in\mathbb{Z}} M^{i,j}$ and $N=\bigoplus_{k,\ell\in\mathbb{Z}} N^{k,\ell}$ are bigraded $R$-modules, then both the direct sum $M\oplus N$ and tensor product $M \otimes N$ (understood to be taken over $R$) are bigraded with $(M\oplus N)^{m,n} = M^{m,n}\oplus N^{m,n}$ and $(M \otimes N)^{m,n} = \bigoplus _{i+k=m,j+\ell = n}M^{i,j}\otimes N^{k,\ell}$. We also define grading shift operators $[\cdot]$ and $\{\cdot\}$ by $(M[r]\{s\})^{i,j}=M^{i-r,j-s}$ where $r$ and $s$ are integers.

Let $\{0,1\}^n$ be the $n$-dimensional hypercube with vertex set $\mathcal{V}(n)$ and edge set $\mathcal{E}(n)$.  A vertex $I=(k_1,\dots, k_n)$ of the hypercube $\{0,1\}^n$ is an $n$-tuple of $0$'s and $1$'s, and there is a directed edge $\xi$ from vertex $I$ to vertex $J$ if every entry of $I$ and $J$ are the same except one entry where $I$ is $0$ and $J$ is $1$. The height $h(I)$ of a vertex is the number of $1$'s in that vertex, that is if $I=(k_1,\dots, k_n)$, then $h(I) = \sum_{i=1}^n k_i$. If $\xi$ is an edge from $I$ to $J$, then the height of $\xi$, denoted $|\xi|$, is the height of the vertex $I$. Suppose that $I$ and $J$ disagree at the $r$-th entry. Define the sign of $\xi$ by $(-1)^{\xi} = (-1)^{ \sum_{i=1}^{r} k_i}$, that is the sign of $\xi$ is $+1$ if the number of $1$'s appearing before the $r$-th entry of $I$ is even, and $-1$ otherwise.

Let $D$ be a link diagram with crossings $c_1,\dots, c_n$. Each crossing has an $A$-resolution and a $B$-resolution as in Figure \ref{figure:resolution}. The collection of curves resulting from a resolution of each crossing is called a {\em Kauffman state} $s$ of $D$. Define $|s|$ to be the number of components of the Kauffman state $s$. When resolving a crossing, replace the crossing with a small line segment, called the {\em trace of the crossing}, connecting the two strands. 

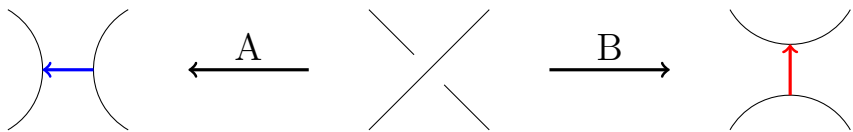
\begin{figure}[h]
$$\begin{tikzpicture}[scale=.8]
\draw (-1,-1) -- (1,1);
\draw (-1,1) -- (-.25,.25);
\draw (.25,-.25) -- (1,-1);
\draw (-3,0) node[above]{\Large{A}};
\draw[->,very thick] (-2,0) -- (-4,0);
\draw (3,0) node[above]{\Large{B}};
\draw[->,very thick] (2,0) -- (4,0);
\draw (-5,1) arc (120:240:1.1547cm);
\draw (-7,-1) arc (-60:60:1.1547cm);
\draw (5,1) arc (210:330:1.1547cm);
\draw (7,-1) arc (30:150:1.1547cm);
\draw[->,blue,very thick,] (-5.57735,0) -- (-6.4226,0);
\draw[<-,red,very thick] (6,0.422625) -- (6,-0.422625);
\end{tikzpicture}$$
\caption{The resolutions of a crossing and their traces in a link diagram. The orientations of the traces are important for the construction of odd Khovanov homology in Section \ref{section:oddKh}.} 
\label{figure:resolution}
\end{figure}

Define $\mathcal{A}_2=R[x]/(x^2) = R \oplus Rx$. In the construction of Khovanov homology, we assign a bigrading to $\mathcal{A}_2$ where $1$ has bigrading $(0,1)$  and $x$ has bigrading $(0,-1)$. The first entry in the bigrading is called the {\em homological grading}, and the second entry is the {\em polynomial grading}. Let $I=(k_1,\dots, k_n)$ be a vertex of the hypercube $\{0,1\}^n$, and define $D(I)$ to be the Kauffman state of $D$ with an $A$-resolution at crossing $c_i$ if $m_i=0$ and with a $B$-resolution at crossing $c_i$ if $m_i=1$. Define $C(D(I))=\mathcal{A}_2^{\otimes|D(I)|}[h(I)]\{h(I)\}$, where each tensor factor of $\mathcal{A}_2$ is understood to be associated to a component of $D(I)$.

Define $R$-linear maps by
\begin{equation}
\label{eq:edgemaps}
\begin{array}{l c l}
m\co\mathcal{A}_2\otimes \mathcal{A}_2\to \mathcal{A}_2 & \quad &
     m\co\begin{cases}
      1\otimes 1 \mapsto 1 &
      1\otimes x\mapsto x \\
      x \otimes 1\mapsto x &
      x \otimes x \mapsto 0
    \end{cases}\\
    \Delta\co \mathcal{A}_2\to \mathcal{A}_2\otimes \mathcal{A}_2 & \quad &
   \Delta\co\begin{cases}
      1 \mapsto 1\otimes x + x \otimes 1 &\\
      x \mapsto x\otimes x. &
    \end{cases}
  \end{array}
\end{equation}
Suppose $\xi$ is an edge in $\mathcal{E}(n)$ from $I$ to $J$. If $|D(J)| = |D(I)| -1$, then define $d_\xi\co C(D(I))\to C(D(J))$ to be multiplication $m$ on the two tensor factors of $\mathcal{A}_2$ corresponding to the components being merged and the identity on all other tensor factors of $\mathcal{A}_2$. If $|D(J)| = |D(I)| +1$, then define $d_\xi\co C(D(I))\to C(D(J))$ to be comultiplication $\Delta$ on the tensor factor of $\mathcal{A}_2$ corresponding to the component being split and the identity on all other tensor factors of $\mathcal{A}_2$. Each map $d\co C(D(I))\to C(D(J))$ is of bidegree $(1,0)$, i.e. it increases the homological grading by one and preserves the polynomial grading.

Suppose the diagram $D$ has $c_+$ positive crossings and $c_-$ negative crossings. Define $CKh(D) = \bigoplus_{I\in\mathcal{V}(n)} C(D(I))[-c_-]\{c_+-2c_-\}$. The module $CKh(D)=\bigoplus_{i,j\in\mathbb{Z}} CKh^{i,j}(D)$ is bigraded with homological grading $i$ and polynomial grading $j$. Let $CKh^{i,*}(D)$ denote $\bigoplus_{j\in\mathbb{Z}} CKh^{i,j}(D)$. Define $d^{i}\co CKh^{i,*}(D)\to CKh^{i+1,*}(D)$ by $d^{i}=\sum_{|\xi|=i}(-1)^\xi d_\xi$. Then $(CKh(D),d)$ is a chain complex whose homology $Kh(D;R)=\bigoplus_{i,j\in\mathbb{Z}}Kh^{i,j}(D;R)$ is called the {\em Khovanov homology} of $D$ with coefficients in $R$. If $R=\mathbb{Z}$, then we will write $Kh(D)$ in place of $Kh(D;\mathbb{Z})$.

Alternately, one can think of $CKh(D)$ as a bigraded module freely generated by enhanced states. An {\em enhanced state} of $D$ is a Kauffman state of $D$ where each component is either labeled with a $1$ or an $x$. The differential can be described using incidence numbers. Let $s$ be an enhanced state viewed as a basis element of $CKh^{i,j}(D)$, and suppose that $d^i(s) = \sum \pm t_k$ where each $t_k$ is an enhanced state viewed as a basis element of $CKh^{i+1,j}(D)$. The incidence number of two enhanced states $s$ and $t$ relative to $d$, denoted $d(s\co t)$, is defined to be zero unless $s$ and $t$ are related in one of the following ways, in which case $d(s\co t)=1$.
\begin{enumerate}
\item The state $t$ can be obtained from $s$ by merging two components $\gamma_1$ and $\gamma_2$ of $s$ into one component $\gamma$ of $t$. Moreover, if $\gamma_1$ and $\gamma_2$ are both labeled $1$, then $\gamma$ is also labeled $1$, and if one of $\gamma_1$ and $\gamma_2$ is labeled $1$ and the other is labeled $x$, then $\gamma$ is labeled $x$. 
\item The state $t$ can be obtained by splitting one component $\gamma$ of $s$ into two components $\gamma_1$ and $\gamma_2$ of $t$. Moreover, if the label on $\gamma$ is $1$, then one of $\gamma_1$ and $\gamma_2$ is labeled $1$ and one is labeled $x$. If the label on $\gamma$ is $x$, then both $\gamma_1$ and $\gamma_2$ are labeled $x$.
\end{enumerate}
Then $d^i(s) = \sum \pm d(s\co t) t$ where the sum is taken over all enhanced states $t$ and the sign is determined by $(-1)^\xi$ for $\xi$ being the edge in the hypercube between the underlying Kauffman states of $s$ and $t$.

See Figures \ref{figure:12n888} and \ref{figure:Kh12n888} for a diagram of the mirror of the knot $12n_{888}$ and a table of its Khovanov homology $Kh(\overline{12n_{888}})$.

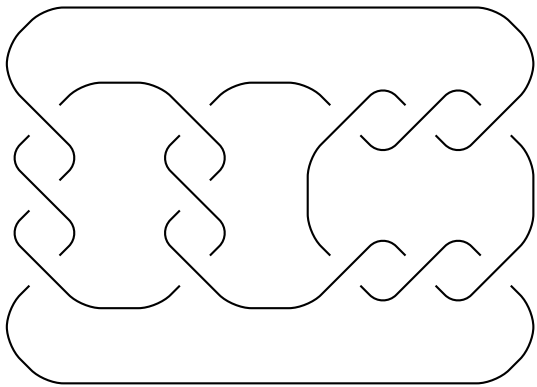
\begin{figure}[h]
%The mirror of 12n888
$$\begin{tikzpicture}[thick, rounded corners = 2.5mm]

\draw (6.7,2.3) -- (7,2)--(8,3)--(8,3.5)--(7.5,4)-- (1.5,4) -- (1,3.5) -- (1,3) -- (2,2) -- (1.7,1.7);
\draw (1.3,1.3) -- (1,1) -- (2,0) -- (3,0) -- (3.3,.3);
\draw (3.7,.7) -- (4,1) -- (3,2) -- (3.3,2.3);
\draw (1.3,2.3) -- (1,2) -- (2,1) -- (1.7,.7);
\draw (1.7,2.7) -- (2,3) -- (3,3) -- (4,2) -- (3.7,1.7);
\draw (3.3,1.3) -- (3,1) -- (4,0) -- (5,0) -- (6,1) -- (6.3,.7);

\draw (3.7,2.7) -- (4,3) -- (5,3) -- (5.3,2.7);
\draw (5.7,2.3) -- (6,2) -- (7,3) -- (7.3,2.7);
\draw (5.3,.7) -- (5,1) -- (5,2) -- (6,3) -- (6.3,2.7);
\draw (5.7,.3) -- (6,0) -- (7,1) -- (7.3,.7);
\draw (6.7,.3) -- (7,0) --(8,1) -- (8,2) -- (7.7,2.3);
\draw (7.7, .3) -- (8,0) -- (8,-.5) -- (7.5,-1) -- (1.5,-1) -- (1,-.5) -- (1,0) -- (1.3,.3);

\end{tikzpicture}$$
\caption{The mirror of the knot $12n_{888}$.}
\label{figure:12n888}
\end{figure}

\begin{figure}[h]
\begin{tabular}{|| c || c | c | c | c | c | c | c | c | c | c | c | c | c ||}
\hline
\hline
$j\backslash i$ & -12 & -11 & -10  & -9 & -8 & -7 & -6 & -5 & -4 & -3 & -2 & -1 & 0 \\
 \hline

 \hline
-9 & & & & & & & & & & & & & 1 \\
 \hline
 -11 & & & & & & & & & & & & &1 \\
 \hline
 -13 & & & & & & & & & & & 1& & \\
 \hline
  -15  & & & & & & & & & 2 & &$1_2$ & & \\
 \hline
   -17  & & & & & & & &  3 & 1,$1_2$ & 1 & & &  \\
 \hline
    -19  & & & & & & &4 & 2,$3_2$ & & & & & \\
 \hline
    -21   & & & & & & 4 & 3,$4_2$ & 1 & & & & & \\
 \hline
   -23     & & & & & 5& 4,$4_2$ &  & & & & & & \\
 \hline
      -25   & & & & 3& 4,$5_2$ & & & & & & & & \\
 \hline
       -27   & & &3  & 5,$3_2$ &  & & & & & & & & \\
 \hline
       -29   & & 1 &3,$3_2$ & & & & & & & & &  &\\
 \hline
      -31      &  &3,$1_2$ & & & & & & & & & & & \\
 \hline
         -33    & 1 & & & & & & & & & & & & \\
 \hline
 \hline
\end{tabular}
\caption{The Khovanov homology $Kh(\overline{12n_{888}})$ of the mirror of the knot  $12n_{888}$. A constant $k$ denotes a summand $\mathbb{Z}^k$, and the notation $k_{\ell}$ denotes a summand $\mathbb{Z}_{\ell}^k$. Computations done by \cite{BNG:JavaKh}.}
\label{figure:Kh12n888}
\end{figure}

\subsection{Chromatic homology}
The {\em chromatic polynomial} $P_G(\lambda)$ of a graph $G$ is the unique polynomial such that $G$ has $P_G(\lambda)$ proper $\lambda$-colorings of its vertices. Let $s$ be a spanning subgraph of $G$, let $|s|$ be the number of edges in $s$, and let $k(s)$ be the number of components of $s$. Then 
$$P_G(\lambda) = \sum_{s} (-1)^{|s|}\lambda^{k(s)}.$$
The chromatic homology $H(G)$ of $G$ categorifies the polynomial 
$$P_G(1+x) = \sum_{s} (-1)^{|s|}(1+x)^{k(s)},$$
and is constructed as follows.

Suppose the edge set of $G$ is $E(G)=\{e_1,\dots,e_n\}$. The set $S(G)$ of spanning subgraphs of $G$ is in one-to-one correspondence with $\mathcal{V}(n)$. Define $G(I)$ to be the spanning subgraph of $G$ with edge set $E(G(I))=\{e_i\in E(G)~|~k_i=1\}$.

Again, we let $\mathcal{A}_2=R[x]/(x^2)$, where now $1$ has bigrading $(0,0)$ and $x$ has bigrading $(0,1)$. Define $C(G(I)) = \mathcal{A}_2^{\otimes k(G(I))}[h(I)]$, where $k(G(I))$ is the number of components of $G(I)$. Let $m$ be the multiplication map defined in Equation \ref{eq:edgemaps}. Let $\xi$ be an edge in the hypercube $\{0,1\}^n$ from vertex $I$ to vertex $J$. If $k(G(I)) = k(G(J))+1$, then define the map $d_{\xi}\co  C(G(I))\to C(G(J))$ to be multiplication on the factors of $\mathcal{A}_2$ that correspond to the components being merged and the identity on all other factors of $\mathcal{A}_2$. If $k(G(I))=k(G(J))$, then define $d_{\xi}\co C(G(I))\to C(G(J))$ to be the identity map.

Define $C(G) = \bigoplus_{I\in\mathcal{V}(n)} C(G(I))$. Let $C^{i,j}(G)$ be the summand of $C(G)$ in homological grading $i$ and polynomial grading $j$. Define $C^{i,*}(G) = \bigoplus_{j\in\mathbb{Z}} C^{i,j}(G)$ to be the summand in homological grading $i$. The differential $d^i\co C^{i,*}(G)\to C^{i+1,*}(G)$ is defined by $d^i = \sum_{h(\xi)=i} (-1)^{\xi} d_{\xi}$. Then $(C(G),d)$ is a chain complex whose homology $H(G;R)=\bigoplus H^{i,j}(G;R)$ is the {\em chromatic homology} over $R$. If $R=\mathbb{Z}$, then we use $H(G)$ in place of $H(G;\mathbb{Z})$. Figure \ref{figure:chromaticexample} shows an example of a graph and its chromatic homology.
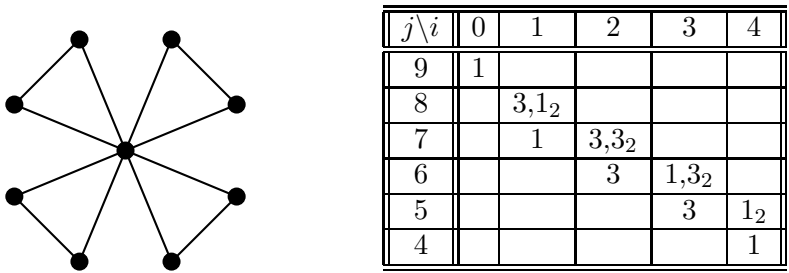
\begin{figure}[h]
\begin{minipage}[b]{0.40\linewidth}
    \centering
$\begin{tikzpicture}[thick,scale = .8]

\fill(1.847,.765) circle (.15cm);
\fill (.765,1.847) circle (.15cm);
\fill (-.765, 1.847) circle (.15cm);
\fill(-1.847,.765) circle (.15cm);
\fill (-1.847,-.765) circle (.15cm);
\fill (-.765,-1.847) circle (.15cm);
\fill (.765,-1.847) circle (.15cm);
\fill (1.847, -.765) circle (.15cm);

\draw (0,0) -- (1.847,.765) -- (.765,1.847) -- (0,0);
\draw (0,0) -- (-1.847,.765) -- (-.765,1.847) -- (0,0);
\draw (0,0) -- (1.847,-.765) -- (.765,-1.847) -- (0,0);
\draw (0,0) -- (-1.847,-.765) -- (-.765,-1.847) -- (0,0);

\fill (0,0) circle (.15cm);
%\draw (0,0) circle (.25cm);

\end{tikzpicture}$
\end{minipage}
 \begin{minipage}[b]{0.30\linewidth}
    \centering

\begin{tabular}{ || c || c | c | c | c | c ||}
\hline\hline
$j \backslash i$ & 0 & 1 & 2 & 3 & 4 \\
\hline
\hline
9 & 1 & & & &  \\
\hline
8 &  & 3,$1_2$ & & & \\
\hline
7 & & 1 & 3,$3_2$ & & \\
\hline
6 & & & 3 & 1,$3_2$ &  \\
\hline
5 & & & & 3 & $1_2$ \\
\hline
4 & & & & & 1 \\
\hline\hline
\end{tabular}
\par\vspace{0pt}
\end{minipage}
\caption{The graph obtained by gluing four triangles at a vertex has the displayed chromatic polynomial cohomology.}
\label{figure:chromaticexample}
\end{figure}

An {\em enhanced state} of $G$ is a spanning subgraph $H$ of $G$ where each component of $H$ is labeled either $1$ or $x$. The incidence between two enhanced states $s$ and $t$, denoted $d(s\co t)$, is either zero or one. The incidence number is one if and only if $t$ can be obtained from $s$ by adding an edge $e$ and one of the following conditions hold.
\begin{enumerate}
\item Adding the edge $e$ to $s$ merges two components $\gamma_1$ and $\gamma_2$ of $s$ into one component $\gamma$ of $t$. If $\gamma_1$ and $\gamma_2$ are labeled $1$, then $\gamma_2$ is labeled $1$, and if one of $\gamma_1$ and $\gamma_2$ is labeled $1$ and the other is labeled $x$, then $\gamma$ is labeled $x$. All other labels are the same in $s$ and $t$.
\item Adding the edge $e$ to $s$ preserves the number of components. Then there is a natural bijection between the components of $s$ and $t$, and that bijection must preserve labels.
\end{enumerate}
As in Khovanov homology, the differential $d$ can be written as $d(s) = \sum \pm d(s\co t) t$ where $t$ ranges over all other enhanced states. 

Helme-Guizon, Przytycki, and Rong \cite{HPR:Torsion} prove that the chromatic homology $H(G)$ is entirely supported on two adjacent $(i,j)$ diagonals and that all of the torsion is supported on the upper diagonal.
\begin{theorem}[Helme-Guizon, Przytycki, Rong]
\label{theorem:ThinChromatic}
Let $G$ be a graph with $n$ vertices. Then $H^{i,j}(G)=0$ unless $n-1\leq i+j \leq n$ and $\Tor H^{i,j}(G)=0$ unless $i+j=n$.
\end{theorem}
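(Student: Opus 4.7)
The plan is to induct on the number of edges of $G$, using the deletion-contraction long exact sequence. For a non-loop edge $e \in E(G)$, I would build the short exact sequence of chain complexes
$$0 \longrightarrow C(G/e)[1] \longrightarrow C(G) \longrightarrow C(G-e) \longrightarrow 0$$
by partitioning the hypercube $\{0,1\}^{|E(G)|}$ according to the value of the $e$-coordinate: subgraphs in which $e$ is present form a subcomplex isomorphic to $C(G/e)[1]$ (the homological shift absorbing the extra edge), and subgraphs in which $e$ is absent form the quotient $C(G-e)$. Since neither the multiplication $m$ nor the identity map alters the polynomial grading, the associated long exact sequence is internal to each polynomial grading $j$:
$$\cdots \to H^{i-1,j}(G/e) \to H^{i,j}(G) \to H^{i,j}(G-e) \to H^{i,j}(G/e) \to \cdots.$$
The base case is a single vertex, where $C(G) = \mathcal{A}_2$ is concentrated in homological grading $0$, giving $H^{0,0}(G) = R$ and $H^{0,1}(G) = R$, both torsion-free and supported on $i+j \in \{0,1\}$.

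For the thinness claim, I would assume inductively the theorem for $G-e$ (with $n$ vertices) and $G/e$ (with $n-1$ vertices). If $i + j \notin \{n-1,n\}$, then $H^{i,j}(G-e)=0$ and $H^{i-1,j}(G/e)=0$ (the latter because $(i-1)+j \notin \{n-2,n-1\}$), so the long exact sequence forces $H^{i,j}(G) = 0$.

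For the torsion claim, fix $i+j = n-1$ and extend the long exact sequence one term to the left:
$$H^{i-1,j}(G-e) \to H^{i-1,j}(G/e) \to H^{i,j}(G) \to H^{i,j}(G-e) \to H^{i,j}(G/e).$$
The leftmost term vanishes because $(i-1)+j = n-2$ lies outside the support of $H^{*,*}(G-e)$, so $H^{i-1,j}(G/e)$ injects into $H^{i,j}(G)$. By induction, $H^{i-1,j}(G/e)$ (lying on the lower diagonal of $G/e$) and $H^{i,j}(G-e)$ (lying on the lower diagonal of $G-e$) are both torsion-free, and hence so is the image of $H^{i,j}(G)$ inside $H^{i,j}(G-e)$. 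The short exact sequence
$$0 \to H^{i-1,j}(G/e) \to H^{i,j}(G) \to \im\bigl(H^{i,j}(G)\to H^{i,j}(G-e)\bigr) \to 0$$
then exhibits $H^{i,j}(G)$ as an extension of a torsion-free module by a torsion-free module; any torsion element of the middle term would project to a torsion (hence zero) element of the torsion-free quotient, would therefore lie in the torsion-free submodule, and would thus be zero. So $H^{i,j}(G)$ is torsion-free.

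The main obstacle is not the long exact sequence argument itself but ensuring the inductive hypothesis applies to $G-e$ and $G/e$. Contraction may produce loops or parallel edges; loops are harmless since a graph containing a loop has acyclic chromatic chain complex, and parallel edges cause no issue in the hypercube construction. Deletion, however, may disconnect $G$, which appears to widen the support band to $n - k \leq i+j \leq n$ for a graph with $k$ components. This forces one either to strengthen the induction to cover disconnected graphs — with torsion confined to the top $k$ diagonals — or to reduce the disconnected case to the connected one via the identification $C(G_1 \sqcup G_2) \cong C(G_1) \otimes C(G_2)$ and a Künneth argument. Once this bookkeeping is handled, the long exact sequence argument above establishes both claims of the theorem.
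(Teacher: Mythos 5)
The paper does not prove this theorem; it is quoted from Helme-Guizon--Przytycki--Rong \cite{HPR:Torsion}, so there is no internal proof to compare against. Your deletion--contraction induction is the standard route to this result (and matches the paper's own Theorem \ref{theorem:LES} machinery), and the core steps --- the short exact sequence with $C(G/e)[1]$ as the subcomplex of subgraphs containing $e$, the observation that the long exact sequence preserves the polynomial grading $j$, and the extension-of-torsion-free-by-torsion-free argument --- are all correct. Note also that your own observation about disconnected graphs shows the theorem as literally stated is false without a connectivity hypothesis (two isolated vertices have $H^{0,0}\neq 0$ with $i+j=0<n-1$); the result should be read for connected $G$, which is how it is applied later in the paper.

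The genuine gap is the bridge case, which you dismiss as bookkeeping but which actually breaks both halves of your argument. Suppose $G$ is connected and $e$ is a bridge (unavoidable if $G$ is a tree). Then $G-e$ has two components, and under your strengthened inductive hypothesis its support widens to $n-2\leq i+j\leq n$ with torsion allowed on $i+j\geq n-1$. For thinness on the diagonal $i+j=n-2$, the long exact sequence now only gives an injection $H^{i,j}(G)\hookrightarrow H^{i,j}(G-e)$ into a group that is \emph{not} zero by induction; to conclude vanishing you must show the connecting map $H^{i,j}(G-e)\to H^{i,j}(G/e)$ is injective there, which requires its explicit description as a multiplication map (compare case (3) of the proof of Lemma \ref{lemma:meta}). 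Likewise, for torsion-freeness on $i+j=n-1$, your quotient term is a submodule of $H^{i,j}(G-e)$, which the strengthened hypothesis no longer guarantees to be torsion-free, so the extension argument stalls. The standard fix is to first reduce to connected graphs via $C(G_1\sqcup G_2)\cong C(G_1)\otimes C(G_2)$ and K\"unneth, then for connected $G$ always delete an edge lying on a cycle (so $G-e$ stays connected), and treat trees as an enlarged base case by a direct computation showing $H^i(T)=0$ for $i>0$ and $H^0(T)$ free and supported in polynomial degrees $n-1$ and $n$. That base case is itself a nontrivial step, not bookkeeping, and your proposal as written does not supply it.
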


Helme-Guizon and Rong \cite{HR:Arbitrary} also give a categorification of the chromatic polynomial using an arbitrary algebra $\mathcal{A}$ in place of $\mathcal{A}_2$. The spaces in this categorification are tensor products of $\mathcal{A}$. Instead of using the multiplication in $\mathcal{A}_2$ when adding an edge that merges two components, one instead uses the multiplication from $\mathcal{A}$. It will be useful for us to take this viewpoint, particularly by using a non-standard multiplication on an $R$-module isomorphic to $\mathcal{A}_2$.

\section{Odd torsion in chromatic homology}
\label{section:odd}

In this section, we prove that the chromatic homology of a graph contains no torsion of odd order. The proof is inspired by Shumakovitch's proof that the Khovanov homology of a homologically thin link has no odd torsion \cite{Shumakovitch:Torsion}. Chmutov, Chmutov, and Rong \cite{CCR:Knight} prove that chromatic homology with $\mathbb{Q}$ has the following {\em knight move decomposition}.
\begin{theorem}[Chmutov, Chmutov, Rong]
\label{theorem:CCRKnight}
Let $G$ be a connected graph with $n$ vertices. 
\begin{enumerate}
\item If $G$ is not bipartite, then $H^{i,n-i}(G;\mathbb{Q})\cong H^{i+1,n-i-2}(G;\mathbb{Q})$ for all $i$.
\item If $G$ is bipartite, then 
$$
H^{0,n}(G;\mathbb{Q}) \cong  H^{1,n-2}(G;\mathbb{Q})\oplus \mathbb{Q}~\text{and}~
H^{i,n-i}(G;\mathbb{Q})\cong  H^{i+1,n-i-2}(G;\mathbb{Q})$$
for all $i>0$.
\end{enumerate}
\end{theorem}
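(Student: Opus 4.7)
The plan is to mimic Lee's deformation argument for Khovanov homology. I would replace the algebra $\mathcal{A}_2=\mathbb{Q}[x]/(x^2)$ used in the chromatic complex by $\mathcal{A}'=\mathbb{Q}[x]/(x^2-1)$; as $\mathbb{Q}$-modules they are identical, but now $m'(x\otimes x)=1$ instead of $0$. This deforms each edge map $d_\xi$ by an additional term of polynomial bidegree $-2$, giving a new complex on the same underlying module. Filtering by polynomial grading yields a spectral sequence whose $E_1$ page is $H^{*,*}(G;\mathbb{Q})$ and whose $E_\infty$ page is the homology $H(G;\mathcal{A}')$ of the deformed complex.

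The next step is to compute $H(G;\mathcal{A}')$ directly. Since $\mathcal{A}'\cong\mathbb{Q}\oplus\mathbb{Q}$ via the idempotents $a=(1+x)/2$ and $b=(1-x)/2$, the deformed complex decomposes as a direct sum of subcomplexes $C_{\ell'}$ indexed by vertex labelings $\ell'\co V(G)\to\{a,b\}$. Because $ab=0$, any state in $C_{\ell'}$ whose underlying spanning subgraph contains a bichromatic edge (endpoints with different $\ell'$-labels) is zero, so $C_{\ell'}$ is supported on subgraphs of the monochromatic subgraph $G_{\ell'}\subseteq G$, with differential simply adjoining a monochromatic edge up to sign. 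A standard contracting homotopy using any fixed monochromatic edge then shows $C_{\ell'}$ is acyclic whenever $G_{\ell'}$ has at least one edge, that is, whenever $\ell'$ fails to be a proper 2-coloring of $G$. Hence $\dim_\mathbb{Q}H(G;\mathcal{A}')$ equals the number of proper 2-colorings of $G$, which is $2$ for connected bipartite $G$ and $0$ for connected non-bipartite $G$.

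Now I bring in Theorem \ref{theorem:ThinChromatic}: $H^{i,j}(G;\mathbb{Q})$ is supported on the two diagonals $i+j\in\{n-1,n\}$. The differential $d_r$ on the $E_r$ page shifts the bidegree by $(1,-2r)$ and hence the total degree $i+j$ by $1-2r$. For $r\geq 2$ this pushes either diagonal outside the support, so $d_r=0$, and the spectral sequence collapses at $E_2=E_\infty$. The only potentially nonzero differential is $d_1\co H^{i,n-i}(G;\mathbb{Q})\to H^{i+1,n-i-2}(G;\mathbb{Q})$, mapping the top diagonal to the bottom diagonal.

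For connected non-bipartite $G$, the fact that $E_\infty=0$ forces $d_1$ to be an isomorphism in every bigrading, giving part (1). For connected bipartite $G$, a direct expansion in the $\{1,x\}$ basis shows that the two Lee survivors $\bigotimes_v e_{\ell'_i(v)}$ (for the two proper 2-colorings $\ell'_1,\ell'_2$) combine into a sum and a difference whose leading terms in the polynomial filtration lie in the empty-spanning-subgraph summand in bigradings $(0,n)$ and $(0,n-1)$ respectively. Thus $E_\infty$ is $\mathbb{Q}$ at exactly these two bigradings, so $d_1$ is an isomorphism for $i\geq 1$, and at $i=0$ it is surjective with kernel $\mathbb{Q}$; this yields $H^{0,n}(G;\mathbb{Q})\cong H^{1,n-2}(G;\mathbb{Q})\oplus\mathbb{Q}$, completing part (2). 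The main subtlety is tracking the bigradings of the Lee survivors through the idempotent change of basis, which I would handle by a short parity argument identifying the top polynomial gradings of the two relevant combinations as $n$ and $n-1$.
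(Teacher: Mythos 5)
Your argument is correct, and it is essentially the argument this paper itself runs for the mod-$p$ analogue: the paper quotes Theorem~\ref{theorem:CCRKnight} from Chmutov--Chmutov--Rong without proof, but its own Lemma~\ref{lemma:knight} proves the same knight-move statement over $\mathbb{Z}_p$ by exactly your route --- deform $m$ to the Lee multiplication, filter by (half) the polynomial grading, identify $E_1$ with chromatic homology and $d_1$ with the induced Lee map, and use Theorem~\ref{theorem:ThinChromatic} to kill all higher differentials so that $E_2=E_\infty$. The one genuinely different ingredient is how you compute the deformed homology $H(G;\mathcal{A}')$: you split $\mathcal{A}'\cong\mathbb{Q}\oplus\mathbb{Q}$ by idempotents and decompose the complex over $2$-colorings, contracting each summand supported on a monochromatic subgraph with an edge; the paper instead proves the corresponding statement (Lemma~\ref{lemma:meta}, via Proposition~\ref{proposition:oddLee}) by deletion--contraction induction using the long exact sequence of Theorem~\ref{theorem:LES}. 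Your coloring decomposition is more conceptual and makes the answer ``number of proper $2$-colorings'' transparent; the paper's induction has the advantage of applying verbatim to the characteristic-$2$ Turner deformation, where the idempotents $(1\pm x)/2$ are unavailable (it uses $x$ and $x+1$ instead). One spot to tighten in the bipartite endgame: a representative with nonzero top polynomial degree $n$ (resp.\ top degree $n-1$) only bounds the filtration level of its class from above, so the parity computation alone does not yet forbid both survivors from landing in $(0,n-1)$. The clean closure is that $E_\infty^{0,n}$ is a subgroup of $H^{0,n}(G;\mathbb{Q})$ and $E_\infty^{0,n-1}$ is a quotient of $H^{0,n-1}(G;\mathbb{Q})$, each of which is one-dimensional for connected $G$; since the survivors total dimension $2$ and are confined to these two spots, each contributes exactly one, and the description of $d_1$ as injective-with-one-dimensional-kernel at $i=0$ and an isomorphism for $i\ge 1$ follows as you state.
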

A {\em knight move pair} (depicted in Figure \ref{figure:tetromino}) is a pair of summands of $\mathbb{Q}$ that differ in bigrading by $(1,-2)$. In Lemma \ref{lemma:knight}, we show that chromatic homology over $\mathbb{Z}_p$ for odd prime $p$ satisfies an analogous knight move decomposition. Using the two knight move decompositions, we show that  $\dim H(G;\mathbb{Q}) = \dim H(G;\mathbb{Z}_p)$, and hence $H(G)$ has no odd torsion.

The knight move decomposition of Lemma \ref{lemma:knight} follows from a spectral sequence construction on chromatic homology. Specifically, there is a new differential $d_L$ that anti-commutes with the usual chromatic differential $d$. The chromatic complex of a connected graph $G$ with differential $d + d_L$ is a filtered complex whose homology is either trivial or dimension two (see Proposition \ref{proposition:oddLee}). The spectral sequence associated to the filtered complex converges at the second page for grading reasons, and consequently chromatic homology with $\mathbb{Z}_p$ coefficients has a knight move decomposition.

Our first task is to define the differential $d_L$ and prove basic properties of the chromatic complex with differential $d + d_L$. Let $p$ be an odd prime, and let $R=\mathbb{Z}_p$ be the integers modulo $p$. Following Lee \cite{Lee:Endo} and Chmutov, Chmutov, and Rong \cite{CCR:Knight}, define a new edge map as follows. Let $m_L\co \mathcal{A}_2\otimes \mathcal{A}_2\to\mathcal{A}_2$ be defined by $m_L(1\otimes 1) = m_L(x\otimes 1) = m_L(1 \otimes x) = 0$ and $m_L(x\otimes x) = 1$. Let $\xi$ be an edge from vertex $I$ to vertex $J$. If $k(G(I)) = k(G(J))+1$, then define $d_{L,\xi}\co C(G(I)) \to C(G(J))$ to be $m_L$ on the factors of $\mathcal{A}_2$ corresponding to the components being merged and the identity on all other factors of $\mathcal{A}_2$. If $k(G(I)) = k(G(J))$, then define $d_{L,\xi}$ to be the zero map. The map $d_L^i\co C^{i,*}(G)\to C^{i+1,*}(G)$ is defined by $d_L^i = \sum_{h(\xi)=i} (-1)^{\xi} d_{L,\xi}$. Note that $d_L$ does not preserve the polynomial grading.

Both $m_L$ and $m+m_L$ are multiplications on $\mathcal{A}_2$, and these two algebra structures on $\mathcal{A}_2$ yield differentials $d_L$ and $d+d_L$. Therefore \cite{HR:Arbitrary} implies that the homologies of the complexes $(C(G),d)$ and $(C(G),d+d_L)$ are independent of the ordering of the edges of $G$. Since $(d+d_L)\circ (d+d_L)=0$, it follows that $d_L\circ d + d \circ d_L=0$. Hence $d_L$ induces a map on the $H(G;\mathbb{Z}_p)$. Denote the homology of $(C(G), d+d_L)$ by $H_{d+d_L}(G;\mathbb{Z}_p)$. 

Recall that if $\mathcal{A}$ is an arbitrary algebra, then the complex and homology using the multiplication from $\mathcal{A}$ are denoted by $C(G;\mathcal{A})$ and $H(G;\mathcal{A})$ respectively. Since the multiplication on $\mathcal{A}$ may or may not preserve polynomial grading, we will consider $H(G;\mathcal{A})$ as a singly graded $R$-module. Let $e$ be an edge of $G$, and let $G-e$ and $G/e$ denote the graphs obtained from $G$ by deleting and contracting the edge $e$ respectively. 

\begin{theorem}[Helme-Guizon, Rong]
\label{theorem:LES}
Let $G$ be a graph, and let $e$ be an edge of $G$. There is a short exact sequence of complexes
$$0\to C^{i-1}(G/e;\mathcal{A})\xrightarrow{\alpha} C^i(G;\mathcal{A})\xrightarrow{\beta} C^i(G-e;\mathcal{A})\to 0$$
that yields a long exact sequence of homology groups
$$\cdots  \to H^{i-1}(G/e;\mathcal{A}) \xrightarrow{\alpha^*} H^i(G; \mathcal{A}) \xrightarrow{\beta^*} H^i(G-e;\mathcal{A}) 
\xrightarrow{\partial} H^i(G/e;\mathcal{A})\to\cdots.$$
\end{theorem}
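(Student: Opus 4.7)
The plan is to realize the long exact sequence as the standard zig-zag from a short exact sequence of complexes associated with a hypercube decomposition. Reorder the edges of $G$ so that $e = e_n$ is the last edge. This splits $\mathcal{V}(n) = \mathcal{V}_0 \sqcup \mathcal{V}_1$ according to whether $e$ is absent or present in the associated spanning subgraph. Projecting onto the first $n-1$ coordinates identifies $\mathcal{V}_0$ with spanning subgraphs of $G - e$ and $\mathcal{V}_1$ with spanning subgraphs of $G/e$ after contracting the edge $e$. Since contracting an edge that is already present in a subgraph preserves the component structure of that subgraph, there are canonical bigraded isomorphisms
\begin{align*}
\bigoplus_{I \in \mathcal{V}_0} C(G(I)) &\cong C(G-e;\mathcal{A}), \\
\bigoplus_{I \in \mathcal{V}_1} C(G(I)) &\cong C(G/e;\mathcal{A})[1],
\end{align*}
where the $[1]$ shift accounts for the extra edge $e$ raising the height by one.

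Define $\alpha$ to be inclusion onto the $\mathcal{V}_1$ summand and $\beta$ to be projection onto the $\mathcal{V}_0$ summand. Since the differential on $C(G;\mathcal{A})$ only adds edges to a spanning subgraph, the image of $\alpha$ is a subcomplex, and $\ker \beta$ coincides with this image; in each homological grading this yields the short exact sequence of the theorem. The content is to verify that $\alpha$ and $\beta$ are chain maps. For each $j<n$, the edge map $d_{\xi_j}$ that toggles $e_j$ acts on the tensor factors of $\mathcal{A}$ corresponding to components being merged or preserved, so its algebraic effect on $G(I)$ agrees with its effect on the contraction $(G/e)(I')$ when $e \in G(I)$, and with its effect on $(G-e)(I')$ when $e \notin G(I)$. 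The sign $(-1)^{\xi_j}$ is also identical in all three cubes because $e_n$ never lies to the left of $e_j$ in the chosen ordering, so the parity of the prefix of $1$'s depends only on the entries indexed by $\{1, \ldots, n-1\}$.

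With the short exact sequence of complexes established, the long exact sequence of homology groups follows from the standard zig-zag lemma, and the connecting homomorphism $\partial$ is given (up to sign) by the edge map for $e$, which is the only component of $d$ that moves a state from $\mathcal{V}_0$ into $\mathcal{V}_1$. The main obstacle is the sign and grading bookkeeping described above; placing $e$ last in the edge ordering is precisely what makes the signs in the sub- and quotient complexes match the signs on the hypercubes for $G/e$ and $G - e$. Beyond that, no input beyond the snake lemma is needed.
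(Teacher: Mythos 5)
Your argument is correct and is essentially the standard deletion--contraction proof: the paper itself cites this result from Helme-Guizon and Rong without reproving it, and their proof proceeds exactly as you describe, by placing $e$ last in the edge ordering, splitting the cube into the subcomplex of states containing $e$ (identified with $C(G/e;\mathcal{A})[1]$) and the quotient of states omitting $e$ (identified with $C(G-e;\mathcal{A})$), and applying the zig-zag lemma. Your sign and grading bookkeeping, and your identification of the connecting map with the edge map for $e$, match the description of $\partial$ given immediately after the theorem statement.
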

The connecting homomorphism $\partial$ has the following description. Adding the edge $e$ to $G-e$ and then contracting it to obtain $G/e$ either induces a natural bijection on the components or merges two components and induces a natural bijection on the remaining components. In the first case $\partial$ is multiplication $(-1)^i$ and in the second case $\partial$ is given by $(-1)^i$ times  multiplication in $\mathcal{A}$ of the labels on the components being merged.

The homology $H_{d+d_L}(G;\mathbb{Z}_p)$ can be computed explicitly. In Section \ref{section:even}, we will use a similar argument to compute another homology, and so we state the following result with seemingly more general than necessary language.

Let $R$ be a commutative ring, let $\mathcal{A}$ be a free rank-two $R$-module with basis $\{a_0,a_1\}$ with multiplicative identity. Suppose that the multiplication $m\co \mathcal{A}\otimes \mathcal{A} \to \mathcal{A}$ is given by $m(a_0\otimes a_1) = m(a_1\otimes a_0) = 0$ and $m(a_i\otimes a_i) = \pm a_i$ for $i=0,1$. Suppose $G$ is bipartite with vertex bipartiion $A_0\sqcup A_1$. Consider the spanning subgraph of $G$ with no edges. Define two enhanced states $S_0$ and $S_1$ using the subgraph with no edges by assigning labels as follows. In $S_0$, each vertex in $A_0$ is labeled $a_0$ and each vertex in $A_1$ is labeled $a_1$. In $S_1$, each vertex in $A_0$ is labeled $a_1$ and each vertex in $A_1$ is labeled $a_0$. Then $S_0$ and $S_1$ are cycles in $H(G;\mathcal{A})$. 

\begin{lemma}
\label{lemma:meta}
Let $R$ and $\mathcal{A}$ be as in the above paragraph. Suppose that $G$ is a connected graph.
\begin{enumerate}
\item If $G$ is not bipartite, then $H^{i}(G;\mathcal{A})=0$ for all $i$.
\item If $G$ is bipartite, then $H^{i}(G;\mathcal{A}) = 0$ for all $i>0$ and $H^{0}(G;\mathcal{A}) \cong R\oplus R$ with basis represented by $\{S_0,S_1\}$.
\end{enumerate}
\end{lemma}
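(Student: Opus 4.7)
The plan is to decompose $(C(G;\mathcal{A}),d)$ as a direct sum of subcomplexes indexed by vertex labelings $\phi\co V(G)\to\{a_0,a_1\}$, and then to identify each summand as either trivial or acyclic. For such a $\phi$, let $C_\phi\subseteq C(G;\mathcal{A})$ be the $R$-submodule spanned by enhanced states in which each vertex $v$ lies in a component labeled $\phi(v)$. Using $m(a_0\otimes a_1)=m(a_1\otimes a_0)=0$, I would check that $d$ sends $C_\phi$ into itself: adding an edge that merges two components of different $\phi$-label contributes zero, while all other moves (same-label merges, or non-merging edge additions) keep the state in $C_\phi$. Hence $C(G;\mathcal{A})=\bigoplus_\phi C_\phi$ as chain complexes.

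Next, I would identify each $C_\phi$ explicitly. Let $G_\phi$ be the spanning subgraph of $G$ with edge set $\{uv\in E(G) : \phi(u)=\phi(v)\}$. Because the labels on components of a state in $C_\phi$ are forced by $\phi$, the module $C_\phi$ has a natural $R$-basis indexed by subsets $S\subseteq E(G_\phi)$, and the differential sends $S$ to $\sum_{e\in E(G_\phi)\setminus S}\pm(S\cup\{e\})$, with signs coming from both the cube sign $(-1)^\xi$ and from the merge signs in $m(a_i\otimes a_i)=\pm a_i$. After a diagonal rescaling of the basis whose scalar on $S$ depends only on the number of $a_i$-labeled components of $S$ for each $i$ (a count intrinsic to $S$), the merge signs cancel and $C_\phi$ becomes the standard signed add-an-edge cube complex on $E(G_\phi)$.

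For this standard complex, if $E(G_\phi)=\emptyset$ then $C_\phi$ is just $R$ concentrated in homological degree $0$, with generator the empty subgraph carrying the $\phi$-labels. If $E(G_\phi)\neq\emptyset$, fix some $e_0\in E(G_\phi)$ and set $h(S)=(-1)^{|\{e'\in S : e'<e_0\}|}(S\setminus\{e_0\})$ when $e_0\in S$, and $h(S)=0$ otherwise. With the sign convention $(-1)^{\xi(S,e)}=(-1)^{|\{e'\in S : e'<e\}|}$, a direct computation yields $dh+hd=\id$, so $C_\phi$ is acyclic. Finally, $E(G_\phi)=\emptyset$ precisely when $\phi$ is a proper $2$-coloring of $G$; for connected $G$ this happens zero times when $G$ is non-bipartite and exactly twice when $G$ is bipartite (swapping the bipartition $A_0\sqcup A_1$), and the two surviving cycles are exactly $S_0$ and $S_1$. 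Summing across $\phi$ then gives (1) in the non-bipartite case and (2) in the bipartite case.

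The main obstacle is the rescaling step in the second paragraph: one must verify that the scalar cancelling the merge signs is well-defined, i.e., that it depends only on the subgraph $S$ and not on any chosen order of edge additions used to reach $S$ in the cube. This reduces to the observation that the number of within-color merges recorded in building $S$ is intrinsic, equaling $|\phi^{-1}(a_i)|-r_i(S)$ for color $a_i$, where $r_i(S)$ is the number of components of $S$ labeled $a_i$. Once this is in hand, the rest is routine.
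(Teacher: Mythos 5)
Your proof is correct, but it takes a genuinely different route from the paper's. The paper argues by induction on the number of edges: the base case (trees) is quoted from Helme-Guizon and Rong, and the inductive step feeds a non-bridge edge $e$ into the deletion--contraction long exact sequence of Theorem \ref{theorem:LES}, running through the three bipartiteness cases for $G$, $G-e$, $G/e$ from Chmutov--Chmutov--Rong and checking in the last case that the connecting homomorphism sends $S_i(G-e)$ to $\pm S_i(G/e)$, hence is an isomorphism. Your argument instead splits the chain complex itself as $\bigoplus_\phi C_\phi$ over vertex $2$-colorings $\phi$ --- the chromatic analogue of the Lee/Bar-Natan decomposition of the Lee complex by orientations --- identifies each $C_\phi$, after a diagonal rescaling, with the signed edge-cube complex of the monochromatic subgraph $G_\phi$, and contracts it explicitly unless $\phi$ is a proper $2$-coloring; the surviving summands are exactly $S_0$ and $S_1$ when $G$ is bipartite and connected. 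Both proofs are complete. Yours is self-contained (it needs neither the tree computation, nor the long exact sequence, nor the bipartiteness trichotomy), produces an explicit contracting homotopy and explicit generators, and makes transparent why bipartiteness is the governing dichotomy; the cost is the sign bookkeeping in the rescaling, which you correctly reduce to the observation that the number of within-color merges accumulated in building $S$ equals the intrinsic quantity $|\phi^{-1}(a_i)|-r_i(S)$, so the rescaling factor is well defined.
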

\begin{proof}
Helme-Guizon and Rong \cite{HR:Arbitrary} show that given a rank two algebra $\mathcal{A}$ with the above properties, if $G$ is a tree, then $H^0(G;\mathcal{A}) = R\oplus R$ and $H^i(G;\mathcal{A}) = 0$ for $i>0$. It is easy to check that both $S_0$ and $S_1$ are cycles, and thus represent a basis for $H^0(G;\mathcal{A})$. 

We complete the proof by induction on $m$, the number of edges in $G$. Since the result is known if $G$ is a tree, we can suppose that $G$ has an edge $e$ that is not a bridge. Let $G-e$ and $G/e$ be the deletion and contraction of $e$ from $G$ respectively. By \cite[Lemma~3.5]{CCR:Knight}, there are three cases to consider:
\begin{enumerate}
\item all three graphs $G$, $G-e$, and $G/e$ are not bipartite,
\item the graphs $G$ and $G-e$ are bipartite, but $G/e$ is not, and
\item the graphs $G-e$ and $G/e$ are bipartite, but $G$ is not.
\end{enumerate}

In case (1), the inductive hypothesis implies that both $H(G-e;\mathcal{A})$ and $H(G/e;\mathcal{A})$ are zero, and so the long exact sequence of Theorem \ref{theorem:LES} implies that $H(G;\mathcal{A})=0$ as well. In case (2), the inductive hypothesis implies that $H(G/e;\mathcal{A})=0$ and that $H^0(G-e;\mathcal{A}) = R \oplus R$ while $H^i(G-e;\mathcal{A})=0$ for $i>0$. Therefore, the long exact sequence of Theorem \ref{theorem:LES} implies that $H^0(G;\mathcal{A})=R \oplus R$ and $H^i(G;\mathcal{A})=0$ for $i>0$.

In case (3), the inductive hypothesis implies that both $H^0(G-e;\mathcal{A})$ and $H^0(G/e;\mathcal{A})$ are $R\oplus R$, while $H^i(G-e;\mathcal{A}) = H^i(G/e;\mathcal{A})=0$ for $i>0$. The long exact sequence of Theorem \ref{theorem:LES} becomes
$$0\to H^0(G;\mathcal{A}) \to H^0(G-e;\mathcal{A}) \xrightarrow{\partial} H^0(G/e;\mathcal{A}) \to H^1(G;\mathcal{A}) \to 0$$
where all unlisted groups are zero. If $\partial$ is an isomorphism, then $H^0(G;\mathcal{A})=H^1(G;\mathcal{A})=0$ as desired. Since both $G-e$ and $G/ e$ are bipartite and $G$ is not, both vertices incident to $e$ lie in the same set of the bipartition. Define $S_i(G-e)$ and $S_i(G/e)$ to be the state $S_i$ for $i=1,2$ and for the graphs $G-e$ and $G/e$ respectively. Then $\partial(S_0(G-e))=\pm S_0(G/e)$ and $\partial(S_1(G-e)) =\pm S_1(G/e)$, and hence $\partial$ is an isomorphism.
\end{proof}

\begin{proposition}
\label{proposition:oddLee}
Let $G$ be a connected graph, and let $p$ be an odd prime.
\begin{enumerate}
\item If $G$ is not bipartite, then $H^{i}_{d+d_L}(G;\mathbb{Z}_p)=0$ for all $i$.
\item If $G$ is bipartite, then $H^{i}_{d+d_L}(G;\mathbb{Z}_p) = 0$ for all $i>0$, and $$H^{0}_{d+d_L}(G;\mathbb{Z}_p) \cong \mathbb{Z}_p\oplus \mathbb{Z}_p.$$
\end{enumerate}
\end{proposition}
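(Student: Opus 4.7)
The plan is to reduce the proposition to Lemma~\ref{lemma:meta} by identifying $(C(G;\mathbb{Z}_p), d+d_L)$ with the chromatic complex over a diagonalizable rank-two $\mathbb{Z}_p$-algebra. First, I would compute $m+m_L$ on the standard basis $\{1,x\}$ of $\mathcal{A}_2$: the formulas in the paper give $(m+m_L)(1\otimes 1)=1$, $(m+m_L)(1\otimes x)=(m+m_L)(x\otimes 1)=x$, and $(m+m_L)(x\otimes x)=1$. Thus the underlying $\mathbb{Z}_p$-module of $\mathcal{A}_2$ equipped with multiplication $m+m_L$ is the ring $\mathcal{A}:=\mathbb{Z}_p[x]/(x^2-1)$. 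Since by \cite{HR:Arbitrary} the chromatic complex depends (up to isomorphism) only on the $R$-algebra structure, one may identify $(C(G;\mathbb{Z}_p),d+d_L)\cong C(G;\mathcal{A})$ as complexes. Because $d_L$ does not preserve polynomial degree, this identification is as singly graded complexes in the homological index $i$, which is already the convention of Lemma~\ref{lemma:meta}.

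Second, I would diagonalize $\mathcal{A}$. Because $p$ is odd, $2\in\mathbb{Z}_p^\times$, so
\[
a_0=\tfrac{1}{2}(1+x),\qquad a_1=\tfrac{1}{2}(1-x)
\]
is a $\mathbb{Z}_p$-basis of $\mathcal{A}$, and a direct computation gives $a_0^2=a_0$, $a_1^2=a_1$, $a_0a_1=a_1a_0=0$, and $a_0+a_1=1$. In the new basis, $\mathcal{A}$ therefore satisfies the hypotheses of Lemma~\ref{lemma:meta} with both $\pm$ signs equal to $+$. Applying that lemma gives: if $G$ is not bipartite then $H^i(G;\mathcal{A})=0$ for all $i$; if $G$ is bipartite then $H^0(G;\mathcal{A})\cong\mathbb{Z}_p\oplus\mathbb{Z}_p$ and $H^i(G;\mathcal{A})=0$ for $i>0$, with generating cycles given by the two edgeless enhanced states $S_0,S_1$ obtained by labeling the two parts of the bipartition by $a_0$ and $a_1$ in the two possible ways. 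Transporting these conclusions back through the identification of Step~1 yields precisely the statement of the proposition.

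The whole argument is almost entirely a bookkeeping exercise; the genuine work has been packaged into Lemma~\ref{lemma:meta}, whose hypotheses were set up precisely to accommodate this kind of diagonal algebra. The only real subtlety — and the reason this approach is not available for the even-torsion case treated in Section~\ref{section:even} — is that the diagonalization requires $2$ to be invertible, so it fails precisely when $p=2$. This explains why the complementary results over $\mathbb{Z}_2$ must be obtained by a different method, namely via the $\mathbb{Z}_2$-Bockstein spectral sequence and the auxiliary maps $\vup$, $\vdown$, and $d_T$ introduced later in the paper.
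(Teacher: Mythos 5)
Your proposal is correct and follows essentially the same route as the paper: identify $(C(G;\mathbb{Z}_p),d+d_L)$ with the chromatic complex over the algebra $(\mathcal{A}_2,m+m_L)\cong\mathbb{Z}_p[x]/(x^2-1)$, diagonalize using the idempotents $\tfrac{1}{2}(1\pm x)$ (possible since $p$ is odd), and invoke Lemma~\ref{lemma:meta}. The only difference is cosmetic — the paper takes $a_1=\tfrac{1}{2}(x-1)$ so that $a_1^2=-a_1$, whereas your sign convention gives $a_1^2=+a_1$; both fit the $\pm$ hypothesis of the lemma.
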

\begin{proof}
Let $\mathcal{A}$ be the $\mathbb{Z}_p$-module with basis $\{1,x\}$ and multiplication $m+m_L$. Then $1$ is the multiplicative identity of $\mathcal{A}$. Let $a_0=\frac{1}{2}(x+1)$ and $a_1=\frac{1}{2}(x-1)$. We have $(m+m_L)(a_0\otimes a_1) = (m+m_L)(a_1\otimes a_0) = 0$, $(m+m_L)(a_0\otimes a_0) = a_0$, and $(m+m_L)(a_1\otimes a_1)=-a_1$. Hence the algebra $\mathcal{A}$ satisfies the conditions of Lemma \ref{lemma:meta}, and the result follows.
\end{proof}

Proposition \ref{proposition:oddLee} shows that the homology of $H_{d+d_L}(G;\mathbb{Z}_p)$ is either rank zero or rank two. The differential $d+d_L$ is non-increasing with respect to the polynomial grading, which induces a filtration on the complex $C(G;\mathbb{Z}_p)$ as described below. The filtered structure of this chain complex yields a spectral sequence whose homology is $H_{d+d_L}(G;\mathbb{Z}_p)$. The behavior of this spectral sequence is encapsulated in Lemma \ref{lemma:knight}.

Let $C$ be an $R$-module, and let $(C,d)$ be a chain complex. Suppose that there exists submodules $\mathcal{F}^i C$ of $C$ for each $i\in \mathbb{Z}$ such that $\mathcal{F}^i C \subseteq \mathcal{F}^{i+1}(C)$, $\bigcup_{i\in Z}\mathcal{F}^i C = C$, and  $d(\mathcal{F}^i(C))\subseteq \mathcal{F}^i(C)$. Then $(C,d)$ is a {\em filtered chain complex}. Each filtered chain complex has an associated spectral sequence that converges to the homology $H(C,d)$. Details of the construction can be found in McCleary \cite{M:Spectral}. Recall that a map on a bigraded complex that increases homological grading by $k$ and increases polynomial grading by $\ell$ is said to be of {\em bidegree} $(k,\ell)$. 

The maps $d$ and $d_L$ anti-commute on $C(G;\mathbb{Z}_p)$, and thus $d_L$ induces a map $d_L^*$ on the homology $H(G;\mathbb{Z}_p)$ of the complex $(C(G;\mathbb{Z}_p),d)$. The following theorem implies that the summands of $H(G;\mathbb{Z}_p)$ can be arranged in pairs whose bigradings differ by $(1,-2)$ except if $G$ is bipartite, then there will be two summands of $\mathbb{Z}_p$ in $H^0(G;\mathbb{Z}_p)$ that are not contained in any pair. Compare this result to Theorem \ref{theorem:CCRKnight}.
\begin{lemma}
\label{lemma:knight}
Let $G$ be a connected graph with $n$ vertices, and let $p$ be an odd prime. 
\begin{enumerate}
\item If $G$ is not bipartite, then $d_L^*\co H^{i,n-i}(G;\mathbb{Z}_p)\to H^{i+1,n-i-2}(G;\mathbb{Z}_p)$ is an isomorphism for all $i$.
\item If $G$ is bipartite, then $d_L^*\co H^{i,n-i}(G;\mathbb{Z}_p)\to H^{i+1,n-i-2}(G;\mathbb{Z}_p)$ is an isomorphism for all $i\geq 1$ and has one dimensional kernel when $i=0$.
\end{enumerate}
\end{lemma}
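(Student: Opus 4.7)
The plan is to equip the complex $(C(G;\mathbb{Z}_p), d+d_L)$ with the increasing filtration by polynomial grading, $\mathcal{F}^p C = \bigoplus_{j \leq p} C^{*,j}$, and analyze the resulting spectral sequence. Since $d$ preserves polynomial grading while $d_L$ strictly decreases it by two, the combined differential $d+d_L$ respects the filtration, yielding a bounded, convergent spectral sequence whose $E_1$ page is the chromatic homology $H(G;\mathbb{Z}_p)$ (this is the $d$-homology of the associated graded) and whose abutment is $H_{d+d_L}(G;\mathbb{Z}_p)$. The first nontrivial differential is induced by $d_L$; it is the map $d_L^*$ in the statement, with $(i,j)$-bidegree $(+1,-2)$.

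The next step is to argue that this spectral sequence collapses right after $d_L^*$. By Theorem \ref{theorem:ThinChromatic}, the $E_1$ page is supported only on the two diagonals $i+j \in \{n-1,n\}$, and the bidegree $(+1,-2)$ of $d_L^*$ sends the upper diagonal $i+j=n$ into the lower diagonal $i+j=n-1$. Every subsequent differential decreases $i+j$ by a larger amount, forcing its targets off of the two-diagonal support. Hence all higher differentials vanish for grading reasons, and the $E_\infty$ page is exactly $\ker d_L^*$ on the upper diagonal and $\coker d_L^*$ on the lower diagonal, viewed as the associated graded of $H_{d+d_L}(G;\mathbb{Z}_p)$ with respect to the polynomial-grading filtration.

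Combining this with Proposition \ref{proposition:oddLee} immediately handles the non-bipartite case: the abutment is zero, so both $\ker d_L^*$ and $\coker d_L^*$ vanish in every bigrading, and $d_L^*$ is an isomorphism at every $i$. When $G$ is bipartite, $H_{d+d_L}(G;\mathbb{Z}_p)\cong\mathbb{Z}_p^2$ is concentrated in homological grading $0$; therefore $d_L^*$ is an isomorphism for every $i \neq 0$, and at $i=0$ the combined dimension of $\ker d_L^*$ at $(0,n)$ and $\coker d_L^*$ at $(0,n-1)$ equals two.

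The final and most delicate step is to show that in the bipartite case this two-dimensional $E_\infty$ splits as one dimension in each of $\ker d_L^*$ at $(0,n)$ and $\coker d_L^*$ at $(0,n-1)$, ruling out the degenerate possibility that the kernel is either trivial or two-dimensional. I plan to do this by tracking the polynomial-grading leading parts of the explicit Lee-type cycles $S_0, S_1$ of Lemma \ref{lemma:meta}, expanded in the basis $\{1,x\}$ via $a_0=(x+1)/2$ and $a_1=(x-1)/2$. The top polynomial-grading-$n$ term of each of $S_0$ and $S_1$ is the single common state $(1/2)^n\, x^{\otimes n}$ on the empty-edge spanning subgraph, so $[S_0]-[S_1]$ has vanishing leading term at grading $n$ while $[S_0]+[S_1]$ projects nontrivially to $E_\infty^{0,n}$; on the other hand, the grading-$(n-1)$ part of $[S_0]-[S_1]$ is $\sum_v 2\epsilon_v (1/2)^n (\text{state with }1\text{ at }v)$ where $\epsilon_v=\pm 1$ depending on the bipartition class, which is a nontrivial combination and survives in $E_\infty^{0,n-1}$. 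Hence each of $E_\infty^{0,n}$ and $E_\infty^{0,n-1}$ is one-dimensional, giving the one-dimensional kernel of $d_L^*$ at $i=0$ that the lemma asserts.
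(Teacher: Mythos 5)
Your proposal is correct and follows essentially the same route as the paper: filter $(C(G;\mathbb{Z}_p),d+d_L)$ by polynomial grading, identify the $E_1$ page with $(H(G;\mathbb{Z}_p),d_L^*)$, use the two-diagonal support from Theorem \ref{theorem:ThinChromatic} to force collapse at $E_2$, and read off the answer from Proposition \ref{proposition:oddLee}. Your extra final step---locating one dimension of the bipartite $E_\infty$ in each of the bigradings $(0,n)$ and $(0,n-1)$ by tracking the leading terms of $S_0\pm S_1$---addresses a point the paper leaves implicit, and it is sound (it amounts to noting that $H^{0,n}(G;\mathbb{Z}_p)$ and $H^{0,n-1}(G;\mathbb{Z}_p)$ are each one-dimensional for connected bipartite $G$, so neither graded piece of the two-dimensional abutment can exceed dimension one).
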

\begin{proof}
Theorem \ref{theorem:ThinChromatic} states that $H(G)$ is entirely supported on two diagonals, i.e. in bigradings $(i,j)$ such that $i+j=n$ or $n-1$ where $n$ is the number of vertices of $G$. Additionally, the torsion of $H(G)$ is supported on bigradings $(i,j)$ where $i+j=n$. The universal coefficient theorem then implies that $H(G;\mathbb{Z}_p)$ is entirely supported in bigradings $(i,j)$ where $i+j=n$ or $n-1$.

The map $d$ is of bidegree $(1,0)$, while the map $d_L$ is of bidegree $(1,-2)$. Then $(C(G), d+d_L)$ has a filtration given by half the polynomial grading. The $E_1$ page of the associated spectral sequence is the chromatic homology $H(G;\mathbb{Z}_p)$ with $\mathbb{Z}_p$ coefficients and the $E_\infty$ page is $H_{d+d_L}(G;\mathbb{F}_p)$. The differential on the $E_1$ page is $d_L^*$, i.e. the complex $(H(G;\mathbb{Z}_p),d_L^*)$ is the $E_1$ page of the spectral sequence. Moreover, the bidegree for the map on the $E_r$ page is $(1,-2r)$. Therefore, all differentials past $d_L^*$ in the spectral sequence are zero, and thus $E_2=E_\infty$. Proposition \ref{proposition:oddLee} implies the result.
\end{proof}

Our proof that the chromatic homology of a graph can only have torsion of order two comes in two parts. The first part is the following result, which implies that all torsion in chromatic homology is of order $2^k$ for some $k$.

\begin{theorem}
\label{theorem:OddTorsion}
The chromatic homology $H(G)$ of the graph $G$ contains no torsion of odd order.
\end{theorem}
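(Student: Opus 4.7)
The plan is to show that $\dim_{\mathbb{Z}_p} H^{i,j}(G;\mathbb{Z}_p) = \dim_\mathbb{Q} H^{i,j}(G;\mathbb{Q})$ for every odd prime $p$ and every bigrading $(i,j)$. Once that is in hand, the universal coefficient theorem applied to the chromatic cochain complex gives
\[
\dim_{\mathbb{Z}_p} H^{i,j}(G;\mathbb{Z}_p) = \dim_\mathbb{Q} H^{i,j}(G;\mathbb{Q}) + t^{(p)}_{i,j} + t^{(p)}_{i+1,j},
\]
where $t^{(p)}_{i,j}$ counts the $p$-primary cyclic summands of $H^{i,j}(G)$. Pointwise equality therefore forces every $t^{(p)}_{i,j}$ to vanish and establishes the theorem. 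A preliminary Künneth argument reduces the problem to the case that $G$ is connected, since the chromatic chain complex of a disjoint union is the tensor product of those of the components, and tensor/Tor of complexes with no odd torsion again have no odd torsion.

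For connected $G$ with $n$ vertices, Theorem \ref{theorem:ThinChromatic} combined with UCT confines $H(G;k)$ for $k\in\{\mathbb{Q},\mathbb{Z}_p\}$ to the two diagonals $i+j\in\{n-1,n\}$. Set $a_i(k):=\dim_k H^{i,n-i}(G;k)$ and $b_i(k):=\dim_k H^{i,n-1-i}(G;k)$. The knight move statements, Lemma \ref{lemma:knight} for $k=\mathbb{Z}_p$ and Theorem \ref{theorem:CCRKnight} for $k=\mathbb{Q}$, both yield the same recurrence: $b_{i+1}(k)=a_i(k)$ for $i\geq 0$ when $G$ is non-bipartite, and $b_{i+1}(k)=a_i(k)$ for $i\geq 1$ with the correction $b_1(k)=a_0(k)-1$ when $G$ is bipartite. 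A short spectral sequence check comparing the $E_\infty$ page of the $d+d_L$ spectral sequence against $H_{d+d_L}(G;\mathbb{Z}_p)$ from Proposition \ref{proposition:oddLee} pins down the base case $b_0(\mathbb{Z}_p)$: it equals $0$ in the non-bipartite case and $1$ in the bipartite case, matching $b_0(\mathbb{Q})$.

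To finish I would pin down the $a_i(k)$'s using the graded Euler characteristic $\sum_{i,j}(-1)^i\dim_k H^{i,j}(G;k)\,x^j=P_G(1+x)$, which is the Euler characteristic of the chain complex and is therefore independent of $k$. Substituting the recurrences converts this into the polynomial identity
\[
(x^2-1)\sum_i (-1)^i a_i(k)\,x^{n-i-2} \;=\; P_G(1+x)-\varepsilon_G(x),
\]
with $\varepsilon_G(x)=0$ in the non-bipartite case and $\varepsilon_G(x)=x^{n-1}+x^{n-2}$ in the bipartite case. Since $x^2-1\neq 0$, this identity has a unique polynomial solution, so $a_i(k)$ is independent of $k$; the $b_i(k)$'s are then determined from the $a_i(k)$'s by the recurrence, and the bigraded Poincaré polynomials over $\mathbb{Q}$ and $\mathbb{Z}_p$ coincide. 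The main obstacle is arranging the bipartite correction term $\varepsilon_G(x)$ so that $P_G(1+x)-\varepsilon_G(x)$ is genuinely divisible by $x^2-1$; this is precisely where the spectral sequence identification of $b_0(k)$ in the second paragraph becomes essential.
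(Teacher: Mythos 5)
Your argument is correct, and while it draws on the same key inputs as the paper (Theorem \ref{theorem:ThinChromatic}, the two knight-move statements in Theorem \ref{theorem:CCRKnight} and Lemma \ref{lemma:knight}, and the universal coefficient theorem), the way you close the argument is genuinely different. The paper runs a minimal-counterexample descent: assuming $p^k$-torsion at the lowest homological grading $i$, UCT forces $\dim_{\mathbb{Z}_p}(i-1,n-i) > \dim_{\mathbb{Q}}(i-1,n-i)$, and the two knight-move isomorphisms transport that strict inequality to bigrading $(i-2,n-i+2)$, producing torsion in a lower homological grading and a contradiction. You instead determine all mod-$p$ Betti numbers globally: the recurrences $b_{i+1}=a_i$ reduce everything to the upper-diagonal dimensions $a_i(k)$, and the graded Euler characteristic identity $(x^2-1)\sum_i(-1)^ia_i(k)x^{n-i-2}=P_G(1+x)-\varepsilon_G(x)$ pins these down independently of $k$ because each $a_i$ occupies a distinct power of $x$. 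Your route costs more: you need the base case $b_0(k)\in\{0,1\}$, which Lemma \ref{lemma:knight} and Theorem \ref{theorem:CCRKnight} as stated do not supply (they only control $b_i$ for $i\geq 1$), so you must extract it from the $E_\infty$ identification in homological degree zero, and on the rational side this requires the $\mathbb{Q}$-analogue of Proposition \ref{proposition:oddLee} (which holds by the identical proof, or by citing Chmutov--Chmutov--Rong directly). In exchange you get strictly more than the theorem asks for: the full bigraded Betti numbers over $\mathbb{Z}_p$ and $\mathbb{Q}$ agree and are computed from $P_G$, which is essentially the free part of Theorem \ref{theorem:ChromaticPoly}; and your explicit K\"unneth reduction to connected graphs is a point the paper's proof passes over silently (its cited lemmas are stated only for connected $G$). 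Note also that the divisibility of $P_G(1+x)-\varepsilon_G(x)$ by $x^2-1$, which you flag as the main obstacle, is automatic once the recurrence and base case are established for either coefficient field, so no separate verification is needed there.
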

\begin{proof}
Suppose that $H(G)$ contains $p^k$-torsion for some odd prime $p$ and some $k>0$. Let $H^{i,j}(G)$ be the summand with minimum homological grading $i$ containing $p^k$-torsion. Since all torsion appears in a summand $H^{i,j}(G)$ where $i+j=n$, it follows that the corresponding polynomial grading is $j=n-i$. Let $\dim_{\mathbb{Z}_p}(i,j)$ and $\dim_{\mathbb{Q}}(i,j)$ denote $\dim H^{i,j}(G;\mathbb{Z}_p)$ and $\dim H^{i,j}(G;\mathbb{Q})$ respectively. The universal coefficient theorem implies that $\dim_{\mathbb{Z}_p}(i-1,n-i) \geq \dim_{\mathbb{Q}}(i-1,n-i)$.
By Lemma \ref{lemma:knight}, 
$$\dim_{\mathbb{Z}_p}(i-1,n-i) = \begin{cases}
\dim_{\mathbb{Z}_p} (i-2, n-i+2) -1 & \text{if $i=2$, $G$ is bipartite,}\\
\dim_{\mathbb{Z}_p} (i-2, n-i+2) &\text{otherwise.}
\end{cases}$$
By Theorem \ref{theorem:CCRKnight}, we have
$$\dim_{\mathbb{Q}} (i-1,n-i) = \begin{cases}
\dim_{\mathbb{Q}}(i-2, n-i+2) -1 & \text{if $i=2$, $G$ is bipartite,}\\
\dim_{\mathbb{Q}} (i-2, n-i+2) &\text{otherwise.}
\end{cases}$$
Therefore $\dim_{\mathbb{Z}_p}(i-2,n-i+2)\geq \dim_{\mathbb{Q}} (i-2,n-i+2)$, and hence $H^{i-2,n-i+2}(G)$ contains $p^k$-torsion for some $k>0$. This contradicts that $i$ is the minimum homological grading where odd torsion appears. Therefore, $H(G)$ contains no torsion of odd order.
\end{proof}

\section{Torsion of order $2^k$ in chromatic homology}
\label{section:even}

In this section, we prove Theorem \ref{theorem:ChromaticTorsion}, i.e. that chromatic homology can only have torsion of order two. Theorem \ref{theorem:OddTorsion} implies that all torsion in chromatic homology is of order $2^k$ for some $k$. Proving that $k=1$ amounts to showing that the Bockstein spectral sequence converges on the correct page. In order to prove our Bockstein convergence result, we show a relationship between the Bockstein differentials and some new maps on chromatic homology. The proof in this section is modeled after a forthcoming paper of Shumakovitch \cite{Shumakovitch:Forthcoming} where he proves that the Khovanov homology of a homologically thin knot can only have torsion of order two.

\subsection{The Bockstein spectral sequence}

In this subsection, we review the construction of the Bockstein spectral sequence. As we will see, it is the exact algebraic tool that we need to show that all torsion of order $2^k$ in $H(G)$ is actually of order $2$.

Let $D$ and $E$ be $R$-modules, and let $f\co D\to D$, $g\co D\to E$, and $h\co E\to D$ be $R$-module homomorphisms such that $\im f = \ker g$, $\im g = \ker h$ and $\im h = \ker f$. The tuple $(D,E,f,g,h)$ is called an {\em exact couple} which we represent by the following diagram.
$$\begin{tikzpicture}
\draw (0,1) node {$D$};
\draw (2,1) node{$D$};
\draw (1,0) node{$E$};
\draw[->] (.3,1) -- (1.7,1);
\draw (1,1) node[above]{$f$};
\draw[->] (1.8,.8) -- (1.2,.2);
\draw (1.5,.5) node[right]{$g$};
\draw[->] (.8,.2) -- (.2,.8);
\draw (.5,.5) node[left]{$h$};
\end{tikzpicture}$$
The map $g\circ h\co  E\to E$ is a differential on $E$ since $(g\circ h) \circ (g \circ h) = g \circ (h \circ g) \circ h$, and $h \circ g =0$. Define $E'= H(E,g\circ h)$, the homology of $E$ with differential $g\circ h$, and define $D' = \im f = \ker g$. Furthermore, define $f'= f|_{D'}\co  D'\to D'$, and define $g'\co D'\to E'$ by $g'(f(y)) = g(y) + (g\circ h)E \in E'$. Finally, define $h'\co E'\to D'$ by $h'(e+(g\circ h)E) = h(e)$. One can check that $(D',E',f',g',h')$ is also an exact couple. Iterating this process yields the {\em spectral sequence associated to the exact couple} $(D,E,f,g,h)$. 

Let $(C,d)$ be a chain complex with integral homology $H(C)$ and mod $p$ homology $H(C;\mathbb{Z}_p)$. Consider the short exact sequence
$$0 \to \mathbb{Z} \xrightarrow{\times p} \mathbb{Z} \xrightarrow{\red p} \mathbb{Z}_p\to 0$$
where $\times p$ is multiplication times $p$ and $\red p$ is reduction modulo $p$. Tensor the complex $(C,d)$ with this short exact sequence to obtain a short exact sequence of complexes. The associated long exact sequence is the exact couple
$$\begin{tikzpicture}
\draw (-.3,1) node {$H(C)$};
\draw (2.3,1) node{$H(C)$};
\draw (1,-.2) node{$H(C;\mathbb{Z}_p)$};
\draw[->] (.3,1) -- (1.7,1);
\draw (1,1) node[above]{$\times p$};
\draw[->] (1.8,.8) -- (1.2,.2);
\draw (1.5,.5) node[right]{$\red p$};
\draw[->] (.8,.2) -- (.2,.8);
\draw (.5,.5) node[left]{$\partial$};
\end{tikzpicture}$$
where $\partial$ is the boundary map in the long exact sequence. The spectral sequence associated to this exact couple is called the {\em $\mathbb{Z}_p$-Bockstein spectral sequence} of $(C,d)$. Some important properties of the Bockstein spectral sequence follow; see McCleary \cite{M:Spectral} for proofs.
\begin{enumerate}
\item The $E_1$ page of the Bockstein spectral sequence is $H(C;\mathbb{Z}_p)$.
\item The $E_\infty$ page of the Bockstein spectral sequence is $H(C)/\Tor H(C)\otimes \mathbb{Z}_p$.
\item If the Bockstein spectral sequence converges at the $E_r$ page, i.e. if $E_r=E_\infty$, then $H(C)$ contains no torsion of order $p^k$ for $k\geq r$.
\end{enumerate}

We consider the $\mathbb{Z}_2$-Bockstein spectral sequence applied to the chromatic complex $(C(G),d)$. Let $y=\sum s_k$ where each $s_k$ is an enhanced state such that $y$ is a cycle in $C(G;\mathbb{Z}_2)$. Denote its $\mathbb{Z}_2$ homology class by $[y]_2$. Define the differential on the $E_1$ page of the $\mathbb{Z}_2$-Bockstein spectral sequence by $\beta = (\partial \; \circ \;\operatorname{red} 2)$. A diagram chase shows that the map $\beta \co H^{i,j}(G;\mathbb{Z}_2)\to H^{i+1,j}(G;\mathbb{Z}_2)$ is defined by $\beta([y]_2)=\left[\frac{1}{2}d(y)\right]_2$. Our goal is thus to show that the homology of the complex $(H(G;\mathbb{Z}_2),\beta)$ is the same as the $E_\infty$ page of the Bockstein spectral sequence, that is $H(G)/\operatorname{Tor} H(G) \otimes \mathbb{Z}_2$. 

Since our path to this result is rather circuitous, we will outline the proof ahead of time. In Lemma \ref{lemma:VerticalIso}, we show there is an isomorphism $\vdown^* \co H^{i,n-i}(G;\mathbb{Z}_2) \to H^{i,n-i-1}(G;\mathbb{Z}_2)$, where $G$ is a graph with $n$ vertices. We use this isomorphism, Theorems \ref{theorem:ThinChromatic}, \ref{theorem:CCRKnight}, and \ref{theorem:OddTorsion}, and the universal coefficient theorem to show that $H(G;\mathbb{Z}_2)$ and $H(G)$ consist of finitely many copies, say $N$, of the two configurations on the middle and right of Figure \ref{figure:tetromino} respectively. If $G$ is bipartite, then $H(G;\mathbb{Z}_2)$ will have two additional summands of $\mathbb{Z}_2$ and $H(G)$ will have two summands of $\mathbb{Z}$ in bigradings $(0,n)$ and $(0,n-1)$. Since there are $N$ torsion summands in $H(G)$, it will suffice to show that the rank of $\beta \co H(G;\mathbb{Z}_2) \to H(G;\mathbb{Z}_2)$ is $N$.

We do not directly analyze the map $\beta$ to prove that it is rank $N$. Instead, we define a map $d_T:C^{i,j}(G;\mathbb{Z}_2)\to C^{i+1,j-1}(G;\mathbb{Z}_2)$ and relate it to $\beta$. The $d_T$ map is inspired by a similar map on Khovanov homology defined by Turner \cite{Turner:KhTwo}. In Proposition \ref{proposition:Turner}, we show that the homology of the chromatic complex over $\mathbb{Z}_2$ with differential $d+d_T$ behaves almost identically to the homology of the chromatic complex over $\mathbb{Z}_p$ or $\mathbb{Q}$ with differential $d+d_L$. Specifically, if $G$ is connected, then $H_{d+d_L}(G;\mathbb{Z}_2)$ is either rank two or rank zero, depending on whether $G$ is bipartite or not. As in Section \ref{section:odd}, the map $d+d_T$ is non-increasing with respect to the polynomial grading, and so a spectral sequence construction implies that $d_T$ induces a map $d_T^*:H^{i,j}(G;\mathbb{Z}_2)\to H^{i+1,j-1}(G;\mathbb{Z}_2)$.

The remainder of the proof consists of three steps: compute the rank of $d_T^*$, relate $d_T^*$ to $\beta$, and finally compute the rank of $\beta$. In order to compute the rank of $d_T^*$, we first show in Lemma \ref{lemma:TurnerCommute} that $d_T^*$ commutes with $\vdown^*$.  Using Proposition \ref{proposition:Turner} and Lemma \ref{lemma:TurnerCommute}, we show in Lemma \ref{lemma:TurnerRank} that the rank of $d_T^*$ is $2N$, where $N$ is the number tetrominoes in $H(G;\mathbb{Z}_2)$. Lemma \ref{lemma:TurnerBockstein} shows that
\begin{equation}
\label{equation:TurnerBockstein1}
d_T^* = \beta \circ \vdown^* + \vdown^* \circ \beta.
\end{equation}
Finally in the proof of Theorem \ref{theorem:ChromaticTorsion}, we show that Equation \ref{equation:TurnerBockstein1} implies that the rank of $\beta$ is $N$. Hence the Bockstein spectral sequence converges at the second page, and the only torsion in the chromatic homology of a graph is of order two.

\subsection{Vertical differentials}
Theorem \ref{theorem:ThinChromatic} implies that the chromatic homology with $\mathbb{Z}_2$ coefficients of a graph $G$ with $n$ vertices is entirely supported on two adjacent diagonals, that is, it is entirely supported in bigradings $(i,j)$ where $i+j=n$ or $n-1$. In this subsection, we show that the chromatic homology on the upper diagonal is isomorphic to the chromatic homology on the lower diagonal. The isomorphism $\vdown^*$ and its inverse $\vup^*$ are the induced maps on chromatic homology coming from two maps defined on the chain complex $C(G;\mathbb{Z}_2)$.

The first of these maps is $\vdown\co  C^{i,j}(G;\mathbb{Z}_2) \to C^{i,j-1}(G;\mathbb{Z}_2)$, defined as follows. Let $s$ be an enhanced state of $G$. Let $t$ be the enhanced state with the same underlying spanning subgraph as $s$ except that exactly one component of $s$ that is labeled $x$ is labeled $1$ in $t$. Then the incidence number of $s$ and $t$ relative to $\vdown$ is $\vdown(s\co t)=1$. For all other states $t'$, we set $\vdown(s\co t')=0$. In other words, $\vdown(s)$ is a sum over all possible ways to change a single component of $s$ that is labeled $x$ to be labeled $1$. 

The second  map $\vup\co C^{i,j}(G;\mathbb{Z}_2)\to C^{i,j+1}(G;\mathbb{Z}_2)$ has a similar definition as $\vdown$, but with one key difference. In order to define $\vup$, we choose a vertex $v_0$ of $G$. If $s$ is an enhanced state of $G$ such that the component of $s$ containing the vertex $v_0$ is labeled $1$, then $\vup(s)$ is defined to be the enhanced state with the same underlying spanning subgraph as $s$ except where the component containing $v_0$ is now labeled $x$. If the component of $s$ containing $v_0$ is labeled $x$, then $\vup(s)=0$.

The reader can check that $\vdown$ and $\vup$ are differentials, i.e. $\vdown\circ\vdown = 0$ and $\vup\circ\vup=0$, and that both $\vdown$ and $\vup$ commute with the chromatic differential $d$ (with $\mathbb{Z}_2$ coefficients). Hence these maps induce maps $\vdown^*$ and $\vup^*$ on chromatic homology $H(G;\mathbb{Z}_2)$ with $\mathbb{Z}_2$ coefficients. As the next lemma states, these maps are isomorphisms.
\begin{lemma}
\label{lemma:VerticalIso}
Let $G$ be a graph with $n$ vertices, and suppose $i+j=n$. The maps $\vdown^*\co H^{i,j}(G;\mathbb{Z}_2) \to H^{i,j-1}(G;\mathbb{Z}_2)$ and $\vup^*\co H^{i,j-1}(G;\mathbb{Z}_2)\to H^{i,j}(G;\mathbb{Z}_2)$ are isomorphisms and inverses of one another.
\end{lemma}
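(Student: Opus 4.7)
The plan is to establish the chain-level identity $\vup \circ \vdown + \vdown \circ \vup = \id_{C(G;\mathbb{Z}_2)}$, and then use the thinness result (Theorem \ref{theorem:ThinChromatic}) to force one of the two compositions to vanish on homology in the relevant bigradings, so that the surviving composition gives the identity.

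To verify the chain-level identity I would evaluate both sides on an arbitrary enhanced state $s$ and split into two cases according to the label of the component $\gamma_0$ of $s$ containing the distinguished vertex $v_0$. If $\gamma_0$ is labeled $1$, then $\vup(s)$ relabels $\gamma_0$ to $x$, and applying $\vdown$ afterwards returns $s$ itself (by relabeling $\gamma_0$ back to $1$) plus a sum of states in which $\gamma_0$ is labeled $x$ and some other originally $x$-labeled component is labeled $1$. Computing $\vup \circ \vdown (s)$ in the opposite order reproduces exactly the same off-diagonal sum but without the distinguished $s$ term, so working mod $2$ all cross terms cancel and only $s$ survives. If $\gamma_0$ is labeled $x$, then $\vup(s)=0$, and the only summand of $\vdown(s)$ on which $\vup$ is nonzero is the one that relabels $\gamma_0$ to $1$, which $\vup$ then returns to $s$. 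In both cases $(\vup\vdown + \vdown\vup)(s) = s$.

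Since $\vdown$ and $\vup$ commute with $d$ (as noted just before the lemma statement), this identity descends to $\vup^* \vdown^* + \vdown^* \vup^* = \id$ on $H(G;\mathbb{Z}_2)$. By Theorem \ref{theorem:ThinChromatic} combined with the universal coefficient theorem, $H^{k,\ell}(G;\mathbb{Z}_2)$ vanishes unless $k+\ell \in \{n-1, n\}$. For $[y] \in H^{i,j}(G;\mathbb{Z}_2)$ with $i+j=n$, the class $\vup^*([y])$ lies in $H^{i,j+1}(G;\mathbb{Z}_2) = 0$, so $\vdown^* \vup^*([y]) = 0$ and the identity forces $\vup^* \vdown^*([y]) = [y]$. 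Symmetrically, for $[z] \in H^{i,j-1}(G;\mathbb{Z}_2)$ with $i+(j-1)=n-1$, the class $\vdown^*([z])$ lies in $H^{i,j-2}(G;\mathbb{Z}_2) = 0$, so $\vup^* \vdown^*([z]) = 0$ and the identity forces $\vdown^* \vup^*([z]) = [z]$. Hence $\vdown^*$ and $\vup^*$ are mutually inverse isomorphisms between $H^{i,j}(G;\mathbb{Z}_2)$ and $H^{i,j-1}(G;\mathbb{Z}_2)$.

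The main obstacle is the combinatorial bookkeeping in the case analysis verifying $\vup\vdown + \vdown\vup = \id$: one must check that the off-diagonal contributions produced by the two orders of composition match up exactly so that they cancel mod $2$, leaving only the diagonal contribution. Once that identity is established, the passage to homology is essentially a formal consequence of the thinness of chromatic homology.
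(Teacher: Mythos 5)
Your proposal is correct and follows essentially the same route as the paper: both establish the chain-level identity $\vup\circ\vdown+\vdown\circ\vup=\id$ by the same case analysis on the label of the component containing $v_0$, and then use the two-diagonal support of $H(G;\mathbb{Z}_2)$ to force one composition to vanish in each bigrading. Your version is marginally more streamlined (the paper inserts an intermediate acyclicity observation before reaching the same final step), but the content is identical.
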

\begin{proof}
We show that $\vdown\circ\vup + \vup\circ\vdown=\id$. Let $s$ be a labeled spanning subgraph of $G$. Suppose the component of $s$ containing $v_0$ is labeled $1$. Then $(\vdown\circ\vup)(s) = s + \sum s_i$ where each $s_i$ is obtained from $s$ by changing the component containing $v_0$ from $1$ to $x$ and by changing another component labeled $x$ to be labeled $1$. Also, $(\vup\circ\vdown)(s) = \sum s_i$ where each $s_i$ is obtained from $s$ by changing the component containing $v_0$ from $1$ to $x$ and by changing another component labeled $x$ to be labeled $1$. Hence $(\vdown\circ\vup + \vup\circ\vdown)(s)=s$, as desired.

Suppose the component of $s$ containing $v_0$ is labeled $x$. Then $(\vdown\circ\vup)(s)=0$ and $(\vup\circ\vdown)(s)=s$. Thus again $(\vdown\circ\vup + \vup\circ\vdown)(s)=s$, as desired. It follows that $\vdown\circ\vup + \vup\circ\vdown=\id$ as a map on the chain groups, and $\vdown^*\circ\vup^* + \vup^*\circ\vdown^*=\id$ as induced maps on chromatic homology $H(G;\mathbb{Z}_2)$. 

If $\alpha\in H(G;\mathbb{Z}_2)$ such that $\vdown^*(\alpha)=0$, then $\vdown^*(\vup^*(\alpha))=\alpha$, and hence $\alpha$ is in the image of $\vdown^*$. Likewise, if $\alpha'\in H(G;\mathbb{Z}_2)$ such that $\vup^*(\alpha')=0$, then $\vup^*(\vdown^*(\alpha'))=\alpha'$, and hence $\alpha'$ is in the image of $\vup^*$. Therefore the homology of the complexes $(H(G;\mathbb{Z}_2),\vdown^*)$ and $(H(G;\mathbb{Z}_2),\vup^*)$ are acyclic. Theorem \ref{theorem:ThinChromatic} implies for each homological grading $i$ there are at most two polynomial gradings $j$ where $H^{i,j}(G;\mathbb{Z}_2)$ are nontrivial. Hence the maps $\vdown^*$ and $\vup^*$ are isomorphisms. 

If $\alpha\in H^{i,n-i}(G;\mathbb{Z}_2)$, then $\vup^*(\alpha)=0$ because $H^{i,n-i+1}(G;\mathbb{Z}_2)=0$. Thus $(\vdown^*\circ \vup^*)(\alpha)=\alpha$. Likewise, if $\alpha\in H^{i,n-i-1}(G;\mathbb{Z}_2)$, then $\vdown^*(\alpha)=0$ because $H^{i,n-i-2}(G;\mathbb{Z}_2)=0$. Thus $(\vup^*\circ \vdown^*)(\alpha)=\alpha$, and hence $\vup^*$ and $\vdown^*$ are inverses of one another.
\end{proof}

Theorem \ref{theorem:CCRKnight} implies that chromatic homology with rational coefficients $H(G;\mathbb{Q})$ can be arranged into knight move pairs except when $G$ is bipartite, there are two additional summands of $\mathbb{Q}$ in $H^{0,*}(G;\mathbb{Q})$. Each summand of $\mathbb{Q}$ in $H(G;\mathbb{Q})$ becomes a $\mathbb{Z}_2$-summand in $H(G;\mathbb{Z}_2)$. Lemma \ref{lemma:VerticalIso} implies that for $i+j=n$, we have $H^{i,j}(G;\mathbb{Z}_2) \cong H^{i,j-1}(G;\mathbb{Z}_2)$. Therefore, each knight move pair in $H(G;\mathbb{Q})$ corresponds to a {\em tetromino} in $H(G;\mathbb{Z}_2)$, i.e. four summands of $\mathbb{Z}_2$ arranged as in Figure \ref{figure:tetromino}. The universal coefficient theorem then implies that each tetromino corresponds to two summands of $\mathbb{Z}$ and one summand of $\mathbb{Z}_{2^k}$ arranged as in Figure \ref{figure:tetromino}.

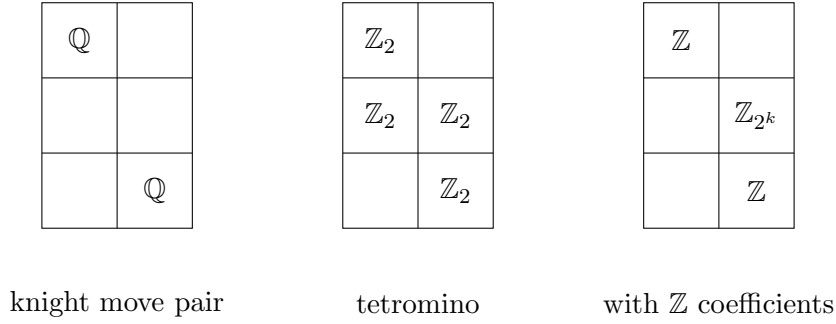
\begin{figure}[h]
$$\begin{tikzpicture}
\draw (0,0) rectangle (2,3);
\draw (0,1) -- (2,1);
\draw (0,2) -- (2,2);
\draw (1,0) -- (1,3);
\draw (0.5,2.5) node{$\mathbb{Q}$};
\draw (1.5,.5) node{$\mathbb{Q}$};
\draw (1,-1) node{knight move pair};

\begin{scope}[xshift = 4cm]
\draw (0,0) rectangle (2,3);
\draw (0,1) -- (2,1);
\draw (0,2) -- (2,2);
\draw (1,0) -- (1,3);
\draw (0.5,2.5) node{$\mathbb{Z}_2$};
\draw (0.5,1.5) node{$\mathbb{Z}_2$};
\draw (1.5,1.5) node{$\mathbb{Z}_2$};
\draw (1.5,.5) node{$\mathbb{Z}_2$};
\draw (1,-1) node{tetromino};
\end{scope}

\begin{scope}[xshift = 8cm]
\draw (0,0) rectangle (2,3);
\draw (0,1) -- (2,1);
\draw (0,2) -- (2,2);
\draw (1,0) -- (1,3);
\draw (0.5,2.5) node{$\mathbb{Z}$};
\draw (1.5,1.5) node{$\mathbb{Z}_{2^k}$};
\draw (1.5,.5) node{$\mathbb{Z}$};
\draw (1,-1) node{with $\mathbb{Z}$ coefficients};
\end{scope}

\end{tikzpicture}$$
\caption{A knight move pair in $H(G;\mathbb{Q})$ corresponds to a tetromino in $H(G;\mathbb{Z}_2)$ and to the configuration on the right in $H(G)$.}
\label{figure:tetromino}
\end{figure}

\subsection{The Turner differential} 

In \cite{Turner:KhTwo} Turner defines a differential on Khovanov homology with $\mathbb{Z}_2$ coefficients. A similar differential on chromatic homology with $\mathbb{Z}_2$ coefficients exists, and we call it the {\em Turner differential}. The Turner differential $d_T\co C^{i,j}(G;\mathbb{Z}_2)\to C^{i+1,j-1}(G;\mathbb{Z}_2)$ is defined as follows. Let $s$ be an enhanced state of $G$. Let $t$ be an enhanced state of $G$ obtained from $s$ by adding an edge that merges two components $\gamma_1$ and $\gamma_2$ of $s$ such that the two components $\gamma_1$ and $\gamma_2$ are labeled $x$ in $s$ and the merged component is labeled $x$ in $t$. Then $d_T(s\co t)=1$. Otherwise $d_T(s\co t)=0$. Said another way, the $d_T$ map has a multiplication that sends $x$ and $x$ to $x$ and all other pairs to zero and has a comultiplication that sends everything to zero. The reader can check that $d_T$ is a differential, $d_T$ commutes with the chromatic differential $d$ (over $\mathbb{Z}_2$), and $(d+d_T) \circ (d + d_T) = 0$. Since $d_T$ commutes with $d$, it induces a map $d_T^* \co H^{i,j}(G;\mathbb{Z}_2) \to H^{i+1,j-1}(G;\mathbb{Z}_2)$.  Note that $d_T$ does not commute with $\vdown$ or $\vup$, and dealing with this unfortunate fact will cause us a bit of work.

Lemma \ref{lemma:meta} implies that the homology of $(C(G;\mathbb{Z}_2), d+d_T)$ has a form similar to the homology of $(C(G;\mathbb{Z}_p),d+d_L)$ for an odd prime $p$.
\begin{proposition}
\label{proposition:Turner}
Let $G$ be a connected graph.
\begin{enumerate}
\item If $G$ is not bipartite, then $H^{i}_{d+d_T}(G;\mathbb{Z}_2)=0$ for all $i$.
\item If $G$ is bipartite, then $H^{i}_{d+d_T}(G;\mathbb{Z}_2) = 0$ for all $i>0$ and 
$$H^{0}_{d+d_T}(G;\mathbb{Z}_2) \cong \mathbb{Z}_2\oplus \mathbb{Z}_2.$$
\end{enumerate}
\end{proposition}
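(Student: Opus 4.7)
The plan is to mirror the proof of Proposition \ref{proposition:oddLee} by choosing a basis for $\mathcal{A}_2 = \mathbb{Z}_2[x]/(x^2)$ in which $m+m_T$ diagonalizes and then invoking Lemma \ref{lemma:meta}. First I would confirm that $m+m_T$ is a commutative, associative multiplication on $\mathcal{A}_2$ with identity $1$: the products are $1\cdot 1 = 1$, $1 \cdot x = x \cdot 1 = x$, and $x \cdot x = x$, and associativity follows from a direct check on the basis monomials. By \cite{HR:Arbitrary}, equipping $\mathcal{A}_2$ with this multiplication produces a chromatic-type complex whose edge maps agree with $d+d_T$ (multiplication $m+m_T$ on component-merging edges, identity on component-preserving edges), so its homology is exactly $H_{d+d_T}(G;\mathbb{Z}_2)$ and is in particular independent of the edge ordering.

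Next, set $a_0 = 1+x$ and $a_1 = x$. These form a new $\mathbb{Z}_2$-basis of $\mathcal{A}_2$ and satisfy $a_0 + a_1 = 1$, so $a_0+a_1$ is the multiplicative identity of the new algebra. A short computation in the original basis gives
\begin{equation*}
(m+m_T)(a_0\otimes a_0) = a_0, \qquad (m+m_T)(a_1\otimes a_1) = a_1,
\end{equation*}
\begin{equation*}
(m+m_T)(a_0\otimes a_1) = (m+m_T)(a_1\otimes a_0) = 0,
\end{equation*}
so $a_0,a_1$ are orthogonal idempotents and the multiplication has precisely the diagonal form required by Lemma \ref{lemma:meta} (with both signs equal to $+1$).

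Applying Lemma \ref{lemma:meta} then yields both conclusions at once: if $G$ is not bipartite, $H^i_{d+d_T}(G;\mathbb{Z}_2)=0$ for every $i$, and if $G$ is bipartite with bipartition $A_0\sqcup A_1$, then $H^i_{d+d_T}(G;\mathbb{Z}_2)=0$ for $i>0$ while $H^0_{d+d_T}(G;\mathbb{Z}_2) \cong \mathbb{Z}_2\oplus\mathbb{Z}_2$ is generated by the two canonical states $S_0, S_1$ associated to the bipartition and the idempotents $a_0, a_1$. I do not anticipate any serious obstacle: the only point meriting care is to check that the Helme-Guizon-Rong chromatic complex for the algebra $(\mathcal{A}_2, m+m_T)$ coincides with $(C(G;\mathbb{Z}_2), d+d_T)$ on the nose, which is immediate from matching the edge maps case-by-case.
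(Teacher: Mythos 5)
Your proof is correct and follows essentially the same route as the paper: both define the orthogonal idempotents $x$ and $1+x$ for the multiplication $m+m_T$ (you merely swap the labels $a_0$ and $a_1$) and then apply Lemma \ref{lemma:meta}. The extra checks you include (associativity, identity, and matching the Helme-Guizon--Rong complex with $(C(G;\mathbb{Z}_2), d+d_T)$) are fine but are taken as understood in the paper's version.
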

\begin{proof}
Let $\mathcal{A}$ be the algebra whose underlying module structure is $\mathbb{Z}_2[x]/(x^2)$ with multiplication given by $m+m_T$, i.e. $(m+m_T)(1\otimes 1) = 1,~ (m+m_T)(1\otimes x)=(m+m_T)(x\otimes 1) = x,~\text{and}~ (m+m_T)(x\otimes x)=x.$
Define $a_0=x$ and $a_1=x+1$. Then $(m+m_T)(a_0\otimes a_0) = a_0$, $(m+m_T)(a_1\otimes a_1) = a_1$, and $(m+m_T)(a_0\otimes a_1) = (m+m_T)(a_1\otimes a_0)=0$. Therefore, the algebra $\mathcal{A}$ satisfies the conditions of Lemma \ref{lemma:meta}, and the result follows.
\end{proof}

\subsection{Interactions between Bockstein, vertical, and Turner maps}

It remains to compute the rank of $d_T^*$, prove that Equation \ref{equation:TurnerBockstein1} holds, and compute the rank of $\beta$. A key step in the computation of the rank of $d_T^*$ uses the fact that the maps $d_T^*$ and $\vdown^*$ commute.  Since $\vdown^*$ and $\vup^*$ are inverses of one another, it suffices to show that $d_T^*$ commutes with $\vup^*$. Unfortunately, the map $d_T$ does not commute with either $\vdown$ or $\vup$. Despite the technical details, the strategy for proving Lemma \ref{lemma:TurnerCommute} is straightforward: we show that for any cycle $c$ in $(C(G;\mathbb{Z}_2),d)$, we have $(d_T \circ \vup + \vup \circ d_T)(c)$ is in the image of $d$.
\begin{lemma}
\label{lemma:TurnerCommute}
The induced maps $\vdown^*$ and $\vup^*$ on $H(G;\mathbb{Z}_2)$ commute with the induced map $d_T^*$.
\end{lemma}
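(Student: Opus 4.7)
The plan is to establish the commutation at the level of homology by exhibiting a chain homotopy. Since $\vup^*$ and $\vdown^*$ are inverse to each other by Lemma \ref{lemma:VerticalIso}, it suffices to prove $d_T^*\circ \vup^* = \vup^*\circ d_T^*$; conjugating this identity by $\vdown^*$ then yields the analogous equation for $\vdown^*$. Concretely, I will produce a $\mathbb{Z}_2$-linear map $H\co C(G;\mathbb{Z}_2)\to C(G;\mathbb{Z}_2)$ of bidegree $(0,0)$ satisfying
\[ d_T\vup + \vup d_T = dH + Hd, \]
so that for any cycle $z$ the commutator $(d_T\vup + \vup d_T)(z)=dH(z)$ lies in the image of $d$.

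The candidate homotopy is the $\mathbb{Z}_2$-linear projection $H=P_1$ sending an enhanced state $s$ to itself when the component $c_{v_0}$ containing the distinguished vertex $v_0$ is labeled $1$, and to $0$ when $c_{v_0}$ is labeled $x$. The verification proceeds by case analysis on the label of $c_{v_0}$ in $s$. When $c_{v_0}$ is labeled $1$, $dP_1(s)=d(s)$, while $P_1 d(s)$ retains only those summands $d_e(s)$ for which $c_{v_0}$'s component is still labeled $1$ after attaching $e$; the terms discarded by $P_1$ are precisely the merges of $c_{v_0}$ with an $x$-labeled component, whose merged label is $m(1\otimes x)=x$. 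Independently unpacking $\vup d_T+d_T\vup$ on $s$ shows that merges along edges joining two $x$-labeled components of $s$ that do not involve $c_{v_0}$ contribute the same terms to both $d_T\vup(s)$ and $\vup d_T(s)$, so they cancel in $\mathbb{Z}_2$; the surviving terms are the merges of $c_{v_0}$ (relabeled $x$ by $\vup$) with an $x$-labeled component, which coincide term-for-term with the terms discarded by $P_1$ above, thanks to the happy coincidence $m_T(x\otimes x)=x=m(1\otimes x)$. When $c_{v_0}$ is labeled $x$, both sides vanish: $P_1(s)=0$, and every nonzero summand of $d(s)$ preserves the $x$-label on $c_{v_0}$ (since $m(x\otimes x)=0$ annihilates the offending merges), so $P_1 d(s)=0$; on the other side $\vup(s)=0$ and $\vup$ kills every summand of $d_T(s)$ for the same reason.

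The main obstacle is the bookkeeping in this case analysis and in the $\mathbb{Z}_2$-cancellations: each edge $e\notin s$ must be classified by whether it lies inside a single component or joins two distinct components, and in the latter case by whether it involves $c_{v_0}$ and by the labels of the components being merged, and the contributions from $m$ and from $m_T$ must be tracked carefully so as to align the retained terms of $d P_1+P_1d$ with those of $d_T\vup+\vup d_T$. Once the chain-homotopy identity is established, $d_T^*\vup^*=\vup^* d_T^*$ on $H(G;\mathbb{Z}_2)$ is immediate, and the relation $\vdown^* d_T^* = \vdown^*\vup^* d_T^*\vdown^* = d_T^*\vdown^*$ from Lemma \ref{lemma:VerticalIso} completes the proof.
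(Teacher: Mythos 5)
Your argument is correct, and it packages the paper's computation in a genuinely cleaner way. The paper makes the same initial reduction (prove commutation with $\vup^*$ and conjugate to get $\vdown^*$) and then, for a cycle $c$, exhibits a preimage of $(d_T\circ\vup+\vup\circ d_T)(c)$ under $d$; that preimage is $\sum_{k=1}^{\ell}s_k$, the sum of the summands of $c$ whose $v_0$-component is labeled $1$ --- which is exactly your $P_1(c)$. So the key object is identical. The difference is that the paper only establishes $d(P_1(c))=(d_T\vup+\vup d_T)(c)$ for cycles, and the delicate step is disposing of the summands of $d(P_1(c))$ in which the $v_0$-component is still labeled $1$: the paper does this by a term-counting argument showing each such state occurs an even number of times, invoking $d(c)=0$ in the middle of the case analysis. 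Your chain-level identity $d_T\vup+\vup d_T=dP_1+P_1d$ holds on every single enhanced state (your two cases are complete and each step checks out, including the crucial matching $m_T(x\otimes x)=m(1\otimes x)=x$ that identifies the surviving terms on the two sides), and those problematic terms are precisely $P_1d$, which vanishes on cycles for free. What this buys you is an honest chain homotopy of bidegree $(1,0)$ rather than an ad hoc cycle-by-cycle verification, at no extra combinatorial cost; it would also let you conclude the induced-map identity directly from standard homological algebra rather than re-deriving it.
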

\begin{proof}

We show that $\vup^*$ and $d_T^*$ commute, and it follows that $\vdown^*$ and $d_T^*$ commute since $\vdown^*$ is the inverse of $\vup^*$. Let $c$ be a cycle in $C^{i,j}(G;\mathbb{Z}_2)$ such that $[c]_2\in H^{i,j}(G;\mathbb{Z}_2)$. Define $\varepsilon = d_T\circ \vup + \vup \circ d_T$. We will show that $\epsilon(c)$ is in the image of $d$, i.e.  that is there exists a $c'\in C^{i,j}(G;\mathbb{Z}_2)$ such that $d(c') = \varepsilon(c)$. Therefore $d_T^*\circ \vup^* + \vup^*\circ d_T^* = 0$, as desired.

In the definition of $\vup$, suppose that marked vertex is $v_0$. Let $c=\sum_{k=1}^m s_k$ where each $s_k$ is an enhanced state and the component containing $v_0$ in each $s_k$ is labeled  $1$ for $1\leq k \leq \ell$ and is labeled $x$ in each $s_k$ for $\ell +1 \leq k \leq m$. Suppose that $1\leq k \leq \ell$. Then $\vup(s_k)$ is an enhanced state with the same underlying subgraph as $s_k$ except the component containing $v_0$ is labeled $x$ instead of $1$. The sum $d_T(\vup(s_k)) = \sum_{r=1}^q t_r$, where each $t_r$ is an enhanced state, can be split into two sums $d_T(\vup(s_k)) = \sum_{r=1}^p t_r + \sum_{r=p+1} ^ q t_r$ such that if $1\leq r \leq p$, then $t_r$ is obtained from $s_k$ by merging the component containing $v_0$ and another component of $\vup(s_k)$ labeled $x$ and if $p+1\leq r \leq q$, then $t_r$ is obtained by merging two components of $\vup(s_k)$ labeled $x$ neither of which contains $v_0$. Each term in $\vup(d_T(s_k))$ is formed by merging two components of $s_k$ that are labeled $x$, then changing the label on the component containing $v_0$ from $1$ to $x$. Hence $\vup(d_T(s_k)) = \sum_{r=p+1}^q t_r$. Thus $\varepsilon(s_k) = \sum_{r=1}^p t_r$. 

Suppose that $\ell+1 \leq k \leq m$. The component containing $v_0$ in $s_k$ is labeled $x$. Hence $\vup(s_k) = 0$. Also, the component containing $v_0$ in each summand of $d_T(s_k)$ is also labeled $x$, and thus $\vup(d_T(s_k))=0$. Hence $\varepsilon (c) =\sum_{k=1}^\ell  \sum_{r=1}^p t_r$. 

Define $c'=\sum_{k=1}^\ell s_k$. We will show that $d(c') = \varepsilon (c)$. Let $t$ be a summand of $\varepsilon(c)$, i.e. let $t$ be an enhanced state of $G$ such that $\varepsilon(s_k\co t)=1$ for some $k$ with $1\leq k \leq \ell$. Then $t$ is obtained from $s_k$ by merging the component containing $v_0$ that is labeled $1$ with another component that is labeled $x$, and labeling the resulting component with $x$. Merging a component labeled $1$ with a component labeled $x$ in $s_k$ and labeling the resulting component in $t$ with $x$ also yields $d(s_k\co t)=1$.

Suppose that $t$ is a summand of $d(c')$, i.e. that $d(s_k'\co t)=1$ for some summand $s_k$ of $c'$. Since the component containing $v_0$ is labeled $1$ for each summand $s_k$ of $c'$, one of two situations arise. In the first case, the component containing $v_0$ in $t$ is labeled $x$ and $t$ is obtained from $s_k$ by merging the component containing $v_0$ with another component labeled $x$. Then, as above, $\varepsilon(s_k\co t)=1$. In the second case, the component containing $v_0$ in $t$ is labeled $1$.  Since $d(c)=0$, it follows that each term in the summand of $d(c)$ where the component containing $v_0$ is labeled $1$ must appear in $d(c)$ an even number of times. All of these contributions must come from a term $s_k$ where $1\leq k \leq \ell$ since if $\ell< k$, then the component containing $v_0$ in $t$ would be labeled $x$. Hence each term where the component containing $v_0$ in $t$ is labeled $1$ in the sum $d(c')$ appears an even number of times. Thus $d(c')=\varepsilon(c)$ as desired.
\end{proof}

The map $d+d_T$ is non-increasing with respect to the polynomial grading, and thus the complex $(C(G;\mathbb{Z}_2),d+d_T)$ is filtered. If $G$ is connected, the $E_\infty$ page of the associated spectral sequence is either rank zero or two. For grading reasons, we know that the $E_3$ page of the sequence is the $E_\infty$ page. We can use Lemma \ref{lemma:TurnerCommute} to show that in fact the $E_2$ page of the sequence is the $E_\infty$ page. Consequently,  $d_T^*$ maps the two $\mathbb{Z}_2$ summands on the left of each tetromino isomorphically onto the two $\mathbb{Z}_2$ summands on the right.

\begin{lemma}
\label{lemma:TurnerRank}
Let $G$ be a connected graph such that $H(G;\mathbb{Z}_2)$ consists of $N$ tetrominoes, and if $G$ is bipartite, two additional summands of $\mathbb{Z}_2$ in $H^0(G;\mathbb{Z}_2)$. Then the rank of $d_T^*\co H(G;\mathbb{Z}_2)\to H(G;\mathbb{Z}_2)$ is $2N$. 
\end{lemma}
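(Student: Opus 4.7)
The plan is to analyze the spectral sequence associated to the filtered complex $(C(G;\mathbb{Z}_2), d+d_T)$, where the filtration is by polynomial grading. Since $d$ preserves polynomial grading while $d_T$ decreases it by one, this is a well-defined increasing filtration. The resulting spectral sequence has $E_1 = H(G;\mathbb{Z}_2)$ with $d_1 = d_T^*$, and converges to $E_\infty = H_{d+d_T}(G;\mathbb{Z}_2)$; the differential $d_r$ has bidegree $(1,-r)$.

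First I track bigradings. By Theorem \ref{theorem:ThinChromatic}, $H(G;\mathbb{Z}_2)$ is supported on the upper diagonal $i+j=n$ and the lower diagonal $i+j=n-1$. A direct bigrading check shows $d_T^*$ preserves each diagonal, $d_2$ maps the upper diagonal to the lower diagonal while $d_2|_{\text{lower}}=0$, and $d_r=0$ for all $r\geq 3$. Hence $E_3=E_\infty$, which by Proposition \ref{proposition:Turner} has dimension $0$ if $G$ is not bipartite and $2$ if $G$ is bipartite, with the two survivors sitting at bigradings $(0,n)$ and $(0,n-1)$ (the extra $\mathbb{Z}_2$ summands of $H(G;\mathbb{Z}_2)$ in the bipartite case).

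Next I apply Lemma \ref{lemma:TurnerCommute}: because $\vdown^*$ commutes with $d_T^*=d_1$ and gives an isomorphism from the upper diagonal subcomplex of $(E_1,d_T^*)$ to the lower diagonal subcomplex (via Lemma \ref{lemma:VerticalIso}), it descends to an isomorphism $E_2^{i,n-i}\cong E_2^{i,n-i-1}$. Set $u_i=\dim E_2^{i,n-i}$, $l_i=\dim E_2^{i,n-i-1}$, and $M_i=\rank(d_2\co E_2^{i,n-i}\to E_2^{i+1,n-i-2})$; then $u_i=l_i$ for all $i$. The identity $E_3=E_\infty$ yields the pointwise equations $u_i - M_i = \dim E_3^{i,n-i}$ and $l_i - M_{i-1} = \dim E_3^{i,n-i-1}$. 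Combined with $u_i=l_i$, with $M_{-1}=0$ (forced by $E_2^{-1,*}=0$), and with the known $E_\infty$ dimensions, these equations telescope: in the non-bipartite case $M_i = M_{i-1}$ for all $i$, forcing $M_i=0$ by finite support; in the bipartite case the survivors give $l_0 = 1$ hence $u_0 = 1$ and $M_0=0$, and then $M_i=M_{i-1}=0$ for $i\geq 1$ by the same recursion.

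It follows that $d_2=0$ and $E_2=E_\infty$, so $\dim E_2$ equals $0$ or $2$, and $\rank(d_T^*) = (\dim E_1 - \dim E_2)/2$ equals $(4N)/2$ or $(4N+2-2)/2$, both of which are $2N$. The main obstacle is the bipartite bookkeeping: one must check that the two extra $\mathbb{Z}_2$ summands surviving to $E_\infty$ at $(0,n)$ and $(0,n-1)$ contribute boundary corrections that exactly balance, so that $M_0=0$ and the propagation $M_i=M_{i-1}$ still forces every $M_i$ to vanish. The argument depends essentially on Lemma \ref{lemma:TurnerCommute} to identify $u_i$ and $l_i$; without that identification, the dimensions on the two diagonals could differ and the telescoping would collapse.
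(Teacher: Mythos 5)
Your proof is correct and follows essentially the same route as the paper's: the spectral sequence of $(C(G;\mathbb{Z}_2),d+d_T)$ filtered by polynomial grading, with Proposition \ref{proposition:Turner} identifying $E_\infty$ and the commuting square of Lemma \ref{lemma:TurnerCommute} together with Lemma \ref{lemma:VerticalIso} forcing degeneration at $E_2$; your telescoping of the ranks $M_i$ is just a tidier organization of the paper's contradiction at the maximal homological grading of $E_2$. The one assertion you flag but do not fully justify---that in the bipartite case the two $E_\infty$ survivors sit one at $(0,n)$ and one at $(0,n-1)$---is in fact forced by your own recursion, since the alternative distributions make $M_0$ negative or make $M_i$ a nonzero constant for all $i$, contradicting finite support, so no real gap remains.
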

\begin{proof}
Recall that the chromatic differential $d$ has bidegree $(1,0)$ and the Turner differential $d_T$ has bidegree $(1,-1)$. The complex $(C(G;\mathbb{Z}_2),d+d_T)$ is filtered by the polynomial grading and gives rise to an associated spectral sequence. The $E_1$ page of the spectral sequence is the chromatic homology $H(G;\mathbb{Z}_2)$ of $G$ with $\mathbb{Z}_2$ coefficients.  If $d_r$ is the map on page $E_r$, then the bidegree of $d_r$ is $(1,-r)$. Because $H(G;\mathbb{Z}_2)=\bigoplus_{i,j}H^{i,j}(G;\mathbb{Z}_2)$ is only supported on two adjacent $(i,j)$-diagonals, the maps $d_r$ are zero for grading reasons when $r> 2$. Proposition \ref{proposition:Turner} implies that the $E_\infty$ page of the spectral sequence is $0$ if $G$ is not bipartite and $\mathbb{Z}_2\oplus \mathbb{Z}_2$ if $G$ is bipartite. 

The differential on the $E_1$ page $H(G;\mathbb{Z}_2)$ is the map induced by Turner's differential. Suppose that $G$ has $n$ vertices. Let $i+j=n$, and let 
\begin{align*}
(d^{i,j}_T)^* & \co H^{i,j}(G;\mathbb{Z}_2) \to H^{i+1,j-1}(G;\mathbb{Z}_2)~\text{and}\\
(d^{i,j-1}_T)^* & \co H^{i,j-1}(G;\mathbb{Z}_2) \to H^{i+1,j-2}(G;\mathbb{Z}_2)
\end{align*}
be the potentially nonzero induced Turner maps. Since $\dim H(G;\mathbb{Z}_2) - \dim H_{d+d_T}(G;\mathbb{Z}_2) = 4N$, it follows that $\rank d_T^*\leq 2N$.

Suppose that $\rank d_T^* < 2N$. Since $d_r=0$ when $r>2$, it follows that the $E_2$ page consists of pairs of summands of $\mathbb{Z}_2$ that differ in bigrading by $(1,-2)$ and if $G$ is bipartite, two additional summands of $\mathbb{Z}_2$ in homological grading $0$. These pairs of summands look like the knight move pairs in Figure \ref{figure:tetromino} except the summands of $\mathbb{Q}$ are replaced with summands of $\mathbb{Z}_2$. Let $(p,q)$ be the bigrading of the $\mathbb{Z}_2$ summand of the $E_2$ page with maximum homological grading. Thus $p+q=n-1$, $\dim E_2^{p,q} = \dim E_2^{p-1,q+2}$, and $E_2^{p,q+1}=0$. For this to occur, one must have $\rank (d_T^{p-1,q+1})^* < \rank (d_T^{p-1,q+2})^*$. However, by Lemma \ref{lemma:TurnerCommute} the following square commutes.
$$\begin{tikzpicture}
\draw (0,2) node{$H^{p-1,q+2}(G;\mathbb{Z}_2)$};
\draw (5,2) node{$H^{p,q+1}(G;\mathbb{Z}_2)$};
\draw (0,0) node{$H^{p-1,q+1}(G;\mathbb{Z}_2)$};
\draw (5,0) node{$H^{p,q}(G;\mathbb{Z}_2)$};

\draw [->] (2,0) -- (3.5,0);
\draw [->] (2,2) -- (3.5,2);
\draw [->] (0,1.6) -- (0,.4);
\draw [->] (5,1.6) -- (5,.4);

\draw (2.75,2) node[above]{$(d_T^{p-1,q+2})^*$};
\draw (2.75,0) node[below]{$(d_T^{p-1,q+1})^*$};
\draw (0,1) node[left]{$(\vdown^{p-1,q+2})^*$};
\draw (5,1) node[right]{$(\vdown^{p,q+1})^*$};

\end{tikzpicture}$$

Lemma \ref{lemma:VerticalIso} implies that both $(\vdown^{p-1,q+2})^*$ and $(\vdown^{p,q+1})^*$ are isomorphisms. Hence it follows that $\rank (d_T^{p-1,q+2})^* = \rank (d_T^{p-1,q+1})^*$,  which is a contradiction. Therefore $\rank d_T^*=2N$, as desired.

 \end{proof}
 
After computing the rank of $d_T^*$, we now express $d_T^*$ in terms of $\beta$ and $\vdown^*$. 
\begin{lemma}
\label{lemma:TurnerBockstein}
Let $G$ be a graph. Then $d_T^*=\beta\circ \vdown^* + \vdown^*\circ\beta$ as maps from $H^{i,j}(G;\mathbb{Z}_2)$ to $H^{i+1,j-1}(G;\mathbb{Z}_2)$.
\end{lemma}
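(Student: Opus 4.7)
The plan is to promote $\vdown$ and $d_T$ to $\mathbb{Z}$-linear maps $\widetilde{\vdown}$ and $\widetilde{d}_T$ on the integer chain complex $C(G;\mathbb{Z})$ defined by exactly the same rules on enhanced states (with the usual hypercube signs $(-1)^\xi$ for $\widetilde{d}_T$), and then to prove a chain-level identity relating them to $d$. The lemma will follow from a routine application of the formula $\beta([y]_2) = [\tfrac{1}{2} d(\tilde{y})]_2$.

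The heart of the proof is the chain-level identity
\begin{equation*}
d \circ \widetilde{\vdown} - \widetilde{\vdown} \circ d = 2\,\widetilde{d}_T
\end{equation*}
on $C(G;\mathbb{Z})$. I would verify this by fixing an enhanced state $s$ and comparing the contributions of each edge $e \notin s$ to both sides. If adding $e$ does not merge any components, the edge map is the identity and commutes with relabeling. If adding $e$ merges two components whose label pair is $(1,1)$ or $(1,x)$, a short bijection between the \emph{flip-then-merge} and \emph{merge-then-flip} summands shows the contributions of $e$ agree on the nose. The only non-cancelling case is when $e$ merges two components both labeled $x$: here $\widetilde{\vdown}(d(s))$ contributes nothing at $e$ because $m(x \otimes x) = 0$, whereas $d(\widetilde{\vdown}(s))$ receives two identical terms $(-1)^\xi s''$, one from each choice of which of the two $x$-labels on the merging pair is flipped to $1$ before the merge (after flipping, the merge uses $m(1 \otimes x) = x$). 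Their sum is exactly $2\widetilde{d}_T(s)$ at edge $e$, as required.

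To conclude, let $y \in C(G;\mathbb{Z}_2)$ be a cycle and let $\tilde{y} \in C(G;\mathbb{Z})$ be its tautological lift (the same basis expansion). Then $d(\tilde{y}) \in 2\,C(G;\mathbb{Z})$, and $d(\widetilde{\vdown}(\tilde{y})) \in 2\,C(G;\mathbb{Z})$ as well since $\widetilde{\vdown}(\tilde{y})$ reduces mod $2$ to the cycle $\vdown(y)$. Applying the Bockstein formula twice,
\begin{equation*}
\beta(\vdown^*[y]_2) + \vdown^*(\beta[y]_2) = \left[\tfrac{1}{2} d(\widetilde{\vdown}(\tilde{y})) + \widetilde{\vdown}\!\left(\tfrac{1}{2} d(\tilde{y})\right)\right]_2.
\end{equation*}
The chain-level identity rewrites twice the integer representative of the bracketed expression as
\begin{equation*}
d(\widetilde{\vdown}(\tilde{y})) + \widetilde{\vdown}(d(\tilde{y})) = 2\widetilde{\vdown}(d(\tilde{y})) + 2\widetilde{d}_T(\tilde{y}),
\end{equation*}
so the representative itself equals $\widetilde{\vdown}(d(\tilde{y})) + \widetilde{d}_T(\tilde{y})$. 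The first summand reduces mod $2$ to $\vdown(d(y)) = 0$, leaving $d_T(y)$, whose class is $d_T^*[y]_2$.

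The main obstacle is the edge-by-edge bookkeeping in the chain-level identity: one has to track the various label configurations on the merging components and see that the discrepancy between $d \widetilde{\vdown}$ and $\widetilde{\vdown} d$ is concentrated exactly on edges implementing the Turner multiplication $x \cdot x \mapsto x$, with multiplicity precisely two because flipping either of the two merging $x$-labels before the merge yields the same target state.
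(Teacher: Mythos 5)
Your proof is correct, and although it rests on the same two pillars as the paper's argument --- the explicit Bockstein formula $\beta([y]_2)=\left[\tfrac{1}{2}d(\tilde y)\right]_2$ together with a case analysis of how $d$ and $\vdown$ interact at a merging edge --- you organize the bookkeeping in a genuinely different and cleaner way. The paper works with the anticommutator $\delta = d\circ\vdown+\vdown\circ d$ applied to the whole cycle $c=\sum s_k$, and for every target state $t$ not produced by the Turner multiplication it must show that $\sum_k\delta(s_k\co t)$ is divisible by four; this forces a three-case analysis of intermediate states $u,u'$ plus a pairing argument that invokes the cycle condition $d(c)\equiv 0 \pmod 2$ to extract the extra factor of two. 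You instead isolate the exact integral commutator identity $d\circ\widetilde{\vdown}-\widetilde{\vdown}\circ d=2\widetilde{d}_T$, which holds state by state with no cycle hypothesis (the discrepancy is concentrated on $x\otimes x$ merges, with multiplicity exactly two because either $x$ may be flipped before the merge and both flips land on the same target state), and the cycle condition then enters only through the one-line observation that $\widetilde{\vdown}(d(\tilde y))$ reduces mod $2$ to $\vdown(d(y))=0$. This buys a transparent separation between the combinatorial content (the commutator identity, which as a bonus reproves that $\vdown$ and $d$ commute mod $2$) and the homological content (the Bockstein manipulation). The only point requiring care in a full write-up is the $(1,x)$ merge case, where the bijection must match ``flip the $x$-labeled member of the merging pair, then merge'' with ``merge, then flip the merged component''; you indicate this and it does work.
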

\begin{proof}
Let $c=\sum_{k=1}^\ell s_k$ be a sum of enhanced states $s_k$. One can consider $c$ as a chain in $C(G)$ or in $C(G;\mathbb{Z}_2)$. Suppose that $c$ is a cycle in $C(G;\mathbb{Z}_2)$, and let $[c]_2$ denote the homology class of $c$ in $H(G;\mathbb{Z}_2)$. Recall that $\beta([y]_2) = [\frac{1}{2}d(y)]_2$ where $d$ is the usual Khovanov differential with $\mathbb{Z}$ coefficients. We have 
\begin{align*}
(\beta\circ\vdown^* + \vdown^*\circ\beta)([c]_2) =& \left[\frac{1}{2} \left(\left(d\circ \vdown\right) + \left(\vdown\circ d\right)\right)(c)\right]_2\\
= &\left[\frac{1}{2} \sum_{k=1}^\ell \left(\left(d\circ \vdown\right) + \left(\vdown\circ d\right)\right)(s_k)\right]_2\\
=& \left[\frac{1}{2}\sum_{k=1}^\ell \delta(s_k)\right]_2,
\end{align*} 
where $\delta$ is defined to be $\left(d\circ \vdown\right) + \left(\vdown\circ d\right)$. We will show that 
$$\sum_{k=1}^\ell d_T(s_k) \equiv \frac{1}{2}\sum_{k=1}^\ell \delta(s_k) \mod 2,$$
which implies the result.

The map $\vdown$ does not change the underlying spanning subgraph of an enhanced state, and the maps $d$ and $d_T$ send an enhanced state $s$ to a sum of enhanced states where the underlying spanning subgraph of each enhanced state in the sum is obtained from $s$ by adding an edge. Hence we will consider all enhanced states $t$ whose underlying spanning subgraph can be obtained from one of the underlying spanning subgraph of one of the states $s_k$ by adding a single edge. 

Suppose that $t$ is obtained from $s_k$ by adding an edge that merges two components $\gamma_1$ and $\gamma_2$ into one component $\gamma$ in $t$. Furthermore suppose that $\gamma_1$, $\gamma_2$, and $\gamma$ are all labeled $x$. By definition $d_T(s_k\co t)=1$. For $i=1,2$, let $u_i$ be the enhanced state with the same underlying spanning subgraph as $s_k$ and with all the same labels as $s_k$ except the component $\gamma_i$ is labeled $1$ in $u_i$ instead of $x$. Then $\vdown(s_k\co u_i)=1$ for $i=1,2$. The components $\gamma_1$ and $\gamma_2$ in $u_i$ are labeled $1$ and $x$ in some order. The enhanced state $t$ can be obtained from $u_i$ by adding an edge, and the resulting merged component $\gamma$ is labeled $x$. Since two components labeled $1$ and $x$ are merged to a component labeled $x$, it follows that $d(u_i\co t)=1$ for $i=1$ and $2$. Therefore $(d\circ \vdown)(s_k\co t)=2$. Also, $(\vdown \circ d)(s_k\co t)=0$ since $\gamma_1$ and $\gamma_2$ are both labeled $x$ and merging two components labeled $x$ yields $0$ under the $d$ map. Therefore, $d_T(s_k\co t)  \equiv \frac{1}{2} \delta(s_k\co t)\mod 2$.

Suppose that $t$ is obtained from $s_k$ by adding an edge. If the edge does not merge two components labeled $x$ in $s_k$ into another component labeled $x$ in $t$, then $d_T(s_k\co t)=0$ by definition. Hence for any such state $t$, we must show that 
$\frac{1}{2}\sum_{k=1}^\ell \delta(s_k\co t)$ is even, or equivalently that $\sum_{k=1}^\ell \delta(s_k\co t)$ is divisible by four. There are three cases to consider.
\begin{enumerate}
\item The enhanced state $t$  is obtained by merging two components $\gamma_1$ and $\gamma_2$ in $s_k$ where one of $\gamma_1$ and $\gamma_2$ is labeled $1$ and the other is labeled $x$. The corresponding merged component $\gamma$ in $t$ is labeled $1$.
\item The enhanced state $t$ is obtained from $s_k$ by changing the label on a component $\gamma_1$ of $s_k$ from $x$ to $1$, then merging two other components $\gamma_2$ and $\gamma_3$ of $s_k$ into one component $\gamma$ of $t$. Moreover the label on $\gamma$ is the product (under $m$) of the labels on $\gamma_2$ and $\gamma_3$.
\item The enhanced state $t$ is obtained from $s_k$ by changing the label on a component $\gamma_1$ of $s_k$ from $x$ to $1$, then adding an edge to $s_k$ that does not change the number of components or the labels on any component.
\end{enumerate}

Suppose that $t$ satisfies case (1). Let $u$ be the state whose underlying spanning subgraph is the same as $t$ and that is obtained from $s_k$ by merging $\gamma_1$ and $\gamma_2$ into one component $\gamma$ where $\gamma$ is labeled $x$. Then $d(s_k\co u)=1$ and $\vdown(u\co t)=1$. Let $u'$ be the state whose underlying spanning subgraph is the same as $s_k$ where both $\gamma_1$ and $\gamma_2$ are labeled $1$ and all other components are labeled as in $s_k$. Then $\vdown(s_k\co u')=1$ and $d(u'\co t)=1$. Thus $\delta(s_k\co t)=2$.

Suppose that $t$ satisfies case (2). Let $u$ be the state whose underlying spanning subgraph is the same as $t$, where the component corresponding to $\gamma_1$ is labeled $x$, and the component $\gamma$ is labeled by the product (under $m$) of the labels on $\gamma_2$ and $\gamma_3$. Then $d(s_k\co u)=1$ and $\vdown(u\co t)=1$. Let $u'$ be the state whose underlying spanning subgraph is the same as $s_k$ where $\gamma_1$ is labeled $1$ and all other components are labeled as in $s_k$. Then $\vdown(s_k\co u')=1$ and $d(u'\co t)=1$. Thus $\delta(s_k\co t)=2$.

Suppose that $t$ satisfies case (3). Let $u$ be the state whose underlying spanning subgraph is the same as $t$ and where the component corresponding to $\gamma_1$ is labeled $x$. Then $d(s_k\co u)=1$ and $\vdown(u\co t)=1$. Let $u'$ be the state whose underlying spanning subgraph is the same as $s_k$ where $\gamma_1$ is labeled $1$ and all other components are labeled as in $s_k$. Then $\vdown(s_k\co u')=1$ and $d(u'\co t)=1$. Thus $\delta(s_k\co t)=2$.

In each case, if there exists a $u$ such that $d(s_k\co u)=1$ and $\vdown (u\co t)=1$, then there exists a corresponding $u'$ such that $\vdown(s_k\co u')=1$ and $d(u'\co t)=1$. Moreover, the pair $(s_k,u)$ uniquely determines the pair $(s_k,u')$ and vice versa. Since $c=\sum_{k=1}^\ell$ is a cycle in $C(G;\mathbb{Z}_2)$, it follows that $[d(c)]_2=0$. Therefore, for each enhanced state $u$ there is an even number, say $2m_u$ for some integer $m_u$, of states $s_k$ with $1\leq k \leq \ell$ such that $d(s_k\co u)=1$. Since each pair $(s_k,u)$ yields a pair $(s_k,u')$, it follows that there are also $2m_u$ states $s_k$ with $1\leq k \leq \ell$ such that $\vdown(s_k\co u')=1$. Therefore 
$$\sum_{k=1}^\ell \delta(s_k\co t) = \sum_{\{u~|~  d(s_k\co u)=1\}}4m_u.$$
Thus, for each $t$, the sum $\sum_{k=1}^\ell \delta(s_k\co t)$ is divisible by four, and so 
$$\frac{1}{2}\sum_{k=1}^\ell \delta(s_k\co t) \equiv 0 \mod 2.$$
Hence 
$$\sum_{k=1}^\ell d_T(s_k) \equiv \frac{1}{2}\sum_{k=1}^\ell \delta(s_k) \mod 2,$$
and $d_T^*=\beta\circ \vdown^* + \vdown^*\circ\beta$.
\end{proof}

We now have all of the necessary ingredients to prove that all torsion in the chromatic homology $H(G)$ is of order two.
\begin{proof}[Proof of Theorem \ref{theorem:ChromaticTorsion}]
Theorem \ref{theorem:OddTorsion} states that the chromatic homology $H(G)$ of $G$ has no torsion of odd order. Hence all torsion in $H(G)$ is of order $2^k$ for some positive integer $k$. In order to show that $k$ must equal one (and hence all torsion is of order two), we need to show that the $\mathbb{Z}_2$-Bockstein spectral sequence converges at the second page, that is $E_2=E_\infty$. 

The $E_\infty$ page of the $\mathbb{Z}_2$-Bockstein spectral sequence is $H(G)/\Tor H(G)\otimes \mathbb{Z}_2$. Suppose that $H(G;\mathbb{Z}_2)$ consists of $N$ tetrominoes and if $G$ is bipartite, two additional summands of $\mathbb{Z}_2$. Thus the dimension of $H(G;\mathbb{Z}_2)$ is $4N$ if $G$ is not bipartite or $4N+2$ if $G$ is bipartite. Chmutov, Chmutov, and Rong \cite{CCR:Knight} prove that the dimension (over $\mathbb{Q}$) of $H(G;\mathbb{Q})$ is $2N$ if $G$ is not bipartite or $2N+2$ if $G$ is bipartite. Hence the dimension over $\mathbb{Z}_2$ of the $E_\infty$ page of the $\mathbb{Z}_2$-Bockstein spectral sequence is $2N$ if $G$ is not bipartite or $2N+2$ if $G$ is bipartite. 

Let $(d_T^U)^*$ be the induced Turner map on the upper diagonal of $H(G;\mathbb{Z}_2)$, i.e. the sum of $(d_T^{i,j})^*$ where $i+j=n$, the number of vertices in $G$. Likewise, let $(d_T^L)^*$ be the induced Turner map on the lower diagonal of $H(G;\mathbb{Z}_2)$, i.e. the sum of $(d_T^{i,j})^*$ where $i+j=n-1$.  Since $\vup^*$ is an isomorphism and commutes with $d_T^*$, it follows that $\rank (d_T^U)^* = \rank (d_T^L)^*$, and because $\rank d_T^*= 2N$, it follows that $\rank (d_T^U)^* = \rank (d_T^L)^* = N.$

If we restrict the equality $d_T^*=\beta\circ \vdown^* + \vdown^*\circ\beta$ of Lemma \ref{lemma:TurnerBockstein} to the upper diagonal, then $\vdown^*\circ \beta = 0$ for grading reasons, and hence $(d_T^U)^*=\beta\circ\vdown^*$. Likewise, restricting to the lower diagonal yields $(d_T^L)^* = \vdown^*\circ \beta$. Since $\vdown^*$ is an isomorphism, it follows that the rank of $\beta$ is $N$. Hence the dimension of the $E_2$ page of the Bockstein spectral sequence is $2N$ if $G$ is not bipartite or $2N+2$ if $G$ is bipartite. Thus $E_2=E_\infty$, and therefore the chromatic homology $H(G)$ of $G$ with $\mathbb{Z}$ coefficients has only torsion of order two.
\end{proof}

\begin{proof}[Proof of Theorem \ref{theorem:ChromaticPoly}]
Chmutov, Chmutov, and Rong \cite[Corollary 5.4]{CCR:Knight} prove that the chromatic homology $C(G;\mathbb{Q})$ with rational coefficients is determined by the chromatic polynomial of $G$. Since $\vdown^*$ is an isomorphism of $H(G;\mathbb{Z}_2)$, it follows that each knight move pair in $H(G;\mathbb{Q})$ is replaced with a tetromino in $H(G;\mathbb{Z}_2)$ and the other configuration in Figure \ref{figure:tetromino} in $H(G)$. Theorem \ref{theorem:ChromaticTorsion}  implies that each torsion summand of $H(G)$ in Figure \ref{figure:tetromino} is in fact a $\mathbb{Z}_2$ summand.
\end{proof}

\section{State graphs}
\label{section:mainresults}

In this section, we describe the connection between chromatic homology and Khovanov homology via state graphs, which was first discovered by Helme-Guizon, Przytycki, and Rong \cite{HPR:Torsion}. We use this connection to prove Theorem \ref{theorem:KhTorsion}.

Associated to each Kauffman state $s$ of a link diagram $D$ is a {\em state graph} $G_s(D)$, constructed as follows. The vertices of $G_s(D)$ are in one-to-one correspondence with the components of $s$, and the edges of $G_s(D)$ are in one-to-one correspondence with the traces of the crossings of $D$. An edge of $G_s(D)$ connects two vertices $v_1$ and $v_2$ together if and only if the corresponding trace connects the components of $s$ corresponding to $v_1$ and $v_2$ together. 

The Kauffman state of $D$ where every resolution is an $A$-resolution is called the {\em all-$A$ state} of $D$, and its corresponding state graph, denoted $G_A(D)$, is the {\em all-$A$ state graph} of $D$. Likewise, the state of $D$ where every resolution is a $B$-resolution is called the {\em all-$B$ state} of $D$, and its corresponding state graph, denoted $G_B(D)$, is the {\em all-$B$ state graph} of $D$. See Figure \ref{figure:A-state12n888} for an example of the all-$A$ state of $\overline{12n_{888}}$.

A link diagram $D$ is {\em $A$-adequate} if $G_A(D)$ has no loops and {\em $B$-adequate} if $G_B(D)$ has no loops. A link is {\em adequate} if it has a diagram $D$ that is both $A$-adequate and $B$-adequate and is {\em semi-adequate} if it has a diagram that is either $A$-adequate or $B$-adequate. The {\em girth} of a graph $G$, denoted $\girth(G)$, is the length of the shortest cycle in $G$. If the graph is acyclic, then the girth of the graph is defined to be infinity. If the girth of $G_A(D)$ or $G_B(D)$ is greater than one, then $D$ is $A$-adequate or $B$-adequate respectively.

If $D$ is $A$-adequate, then Theorem \ref{theorem:KhTorsion} implies that the first few homological gradings where $Kh(D)$ is supported have only torsion of order two. Likewise, if $D$ is $B$-adequate, then Theorem \ref{theorem:KhTorsion} implies the the last few homological gradings where $Kh(D)$ is supported have only torsion of order two. Thus if a link is semi-adequate, Theorem \ref{theorem:KhTorsion} tells us something about the torsion in its Khovanov homology. Alternating and Montesinos links are semi-adequate, and semi-adequate knots occur frequently among knots with few crossings. All knots with fewer than twelve crossings except two eleven crossing knots are semi-adequate. Stoimenow \cite{Sto:Semi} observes that at least 249,649 of the 253,293 knots with 15 crossings tabulated in \cite{HT:Table} are semi-adequate.

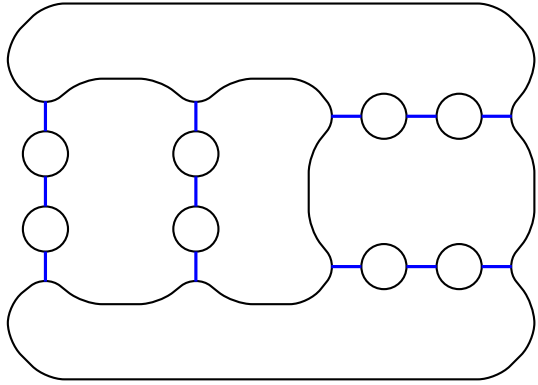
\begin{figure}[h]
%The all-A state of the mirror of 12n888
$$\begin{tikzpicture}[thick, rounded corners = 2.5mm]

\draw (4,4) -- (1.5,4) -- (1,3.5) -- (1,3) -- (1.5,2.6) -- (2,3) -- (3,3) -- (3.5,2.6) -- (4,3) -- (5,3) -- (5.4,2.5) -- (5,2) -- (5,1) -- 
(5.4,.5) -- (5,0) -- (4,0) -- (3.5,.4) -- (3,0) -- (2,0) -- (1.5,.4) -- (1,0) -- (1,-.5) -- (1.5,-1) -- (7.5,-1) -- (8,-.5) -- (8,0) -- (7.6,.5) --
(8,1) -- (8,2) -- (7.6,2.5) -- (8,3) -- (8,3.5) -- (7.5,4) -- (4,4);

\draw (1.5,2) circle (.3cm);
\draw (1.5,1) circle (.3cm);

\draw (3.5, 2) circle (.3cm);
\draw (3.5,1) circle (.3cm);

\draw (7,.5) circle (.3cm);
\draw (6,.5) circle (.3cm);

\draw (6,2.5) circle (.3cm);
\draw (7,2.5) circle (.3cm);

\draw [very thick, blue] (1.5,2.3) -- (1.5,2.7);
\draw [very thick, blue] (3.5,2.3) -- (3.5,2.7);
\draw [very thick, blue] (1.5,1.7) -- (1.5,1.3);
\draw [very thick, blue] (3.5,1.7) -- (3.5,1.3);
\draw [very thick, blue] (1.5, .7) -- (1.5,.3);
\draw [very thick, blue] (3.5, .7) -- (3.5,.3);

\draw [very thick, blue] (5.3, 2.5) -- (5.7,2.5);
\draw [very thick, blue] (6.3,2.5) -- (6.7,2.5);
\draw [very thick, blue] (7.3,2.5) -- (7.7,2.5);
\draw [very thick, blue] (5.3, .5) -- (5.7,.5);
\draw [very thick, blue] (6.3,.5) -- (6.7,.5);
\draw [very thick, blue] (7.3,.5) -- (7.7,.5);

\end{tikzpicture}$$
\caption{The all-$A$ state of $\overline{12n_{888}}$. Its all-$A$ state graph is four triangles glued along a vertex, as depicted in Figure \ref{figure:chromaticexample}. The girth of $G_A(D)$ is three.}
\label{figure:A-state12n888}
\end{figure}

Helme-Guizon, Przytycki, and Rong \cite{HPR:Torsion} prove the following theorem comparing the chromatic homology of $G_A(D)$ and the Khovanov homology of $D$.
\begin{theorem}
\label{theorem:ChromaticKhovanov}[Helme-Guizon, Przytycki, Rong]
Let $D$ be a diagram with $c_-$ negative crossings and $c_+$ positive crossings. Suppose that $G_A(D)$ has $n$ vertices and girth $g$ with $g>1$. Let $p= i - c_-$ and $q = n-2j + c_+ - 2c_-$.
If $0\leq i < g - 1$ and $j\in\mathbb{Z}$, then there is an isomorphism
$$H^{i,j}(G_A(D)) \cong Kh^{p,q}(D)$$
Moreover, there is an isomorphism of torsion
$$\operatorname{Tor} H^{g-1,j}(G_A(D))\cong \operatorname{Tor} Kh^{g- c_- -1,q}(D)$$
for all $j\in\mathbb{Z}$.
\end{theorem}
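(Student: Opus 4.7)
The plan is a direct comparison of the two chain complexes. Kauffman states $D(I)$ of $D$ and spanning subgraphs $G(I)$ of $G_A(D)$ are canonically indexed by the same hypercube $\{0,1\}^n$, where $n$ is both the number of crossings of $D$ and the number of edges of $G_A(D)$. The girth hypothesis imposes a key rigidity: if $h(I)<g$, then $G(I)$ has fewer than $g$ edges and must be acyclic, and tracing this through the resolution picture, every earlier edge addition in $G(I)$ merged two distinct components, so that $k(G(I))=|D(I)|$. This yields a canonical module-level identification $C(G(I))\cong C(D(I))$ for all $I$ with $h(I)<g$.

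I would then match the bigradings. Tracking the conflicting conventions (in chromatic homology $x$ contributes $+1$ to the polynomial grading, while in Khovanov homology $x$ contributes $-1$ together with shifts $\{c_+-2c_-\}$ and $\{h(I)\}$), an enhanced state whose underlying subgraph has exactly $j$ components labeled $x$ lives in bigrading $(i,j)$ chromatically and in bigrading $(p,q)=(i-c_-, n-2j+c_+-2c_-)$ in Khovanov. For $0\le i\le g-2$, every chain group in sight is indexed by states with $h(I)\le g-1$, and every hypercube edge of height $\le g-2$ is a merge in both theories (since the target $G(J)$ remains acyclic). Under the identification, the chromatic differential and the Khovanov differential coincide, yielding $H^{i,j}(G_A(D))\cong Kh^{p,q}(D)$ for $i<g-1$.

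The boundary case $i=g-1$ requires more care. The incoming differential $d^{g-2}$ still matches, so $\im d^{g-2}$ is the same subgroup of the shared chain module in bigrading $(g-1,j)$. The outgoing differential $d^{g-1}$ can differ: at a ``cycle state'' $J$ (one for which $G(J)$ is a $g$-cycle plus isolated vertices), every incoming edge $\xi\co I\to J$ is cycle-creating, so chromatic applies $\id$, while Khovanov applies the comultiplication $\Delta$ to the $\mathcal{A}_2$-factor corresponding to the circle of $D(I)$ that splits into two circles of $D(J)$. Since $\Delta\co\mathcal{A}_2\to\mathcal{A}_2\otimes\mathcal{A}_2$ is injective (its image is spanned by the linearly independent elements $1\otimes x+x\otimes 1$ and $x\otimes x$), the tensor map $\Delta\otimes\id$ on the remaining factors is also injective. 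Thus for any chain $c=\sum c_I\in C^{g-1}$, the Khovanov kernel condition $\sum(-1)^\xi\Delta(c_I)=0$ at $J$ is equivalent to the chromatic condition $\sum(-1)^\xi c_I=0$. Combined with the matching incoming map, this gives an isomorphism $H^{g-1,j}(G_A(D))\cong Kh^{g-1-c_-,q}(D)$ and in particular agreement of torsion.

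The main obstacle is the bookkeeping at $i=g-1$: for each cycle state $J$ and each incoming edge $\xi\co I\to J$, one must single out the specific $\mathcal{A}_2$-factor of $C(D(I))$ on which $\Delta$ acts, check that the non-split factors are naturally identified across the various $I$ incoming to $J$ (via the unchanging isolated circles/components), and assemble these identifications into one uniform injective map $\Delta\otimes\id$ so that the kernel comparison goes through. Everything else is straightforward verification that the grading shifts and the combinatorics of the hypercube line up.
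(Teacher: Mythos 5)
Your argument is correct in outline, but note that the paper offers no proof of this statement to compare against: it is quoted from Helme-Guizon, Przytycki, and Rong \cite{HPR:Torsion}. Your direct chain-level comparison is the standard route and closely parallels the paper's own proof of the odd Khovanov analogue (Theorem \ref{theorem:oddKhribbon}), which establishes exactly the circle--component correspondence and the merge/merge dichotomy you invoke for all states $I$ with $G(I)$ acyclic. The bookkeeping you flag at $i=g-1$ does go through: a height-$g$ state $J$ whose subgraph contains a cycle is forced by the girth hypothesis to be a single $g$-cycle together with isolated vertices, so for every incoming edge the split circle is the one carried by the cycle's vertex set, the two resulting circles of $D(J)$ are the same pair independent of the source $I$, and the untouched factors are literally the isolated-vertex circles common to all sources; hence the kernel conditions at $J$ are $(\Delta\otimes\id)\bigl(\sum_\xi(-1)^\xi c_I\bigr)=0$ versus $\sum_\xi(-1)^\xi c_I=0$, which agree because $\Delta$ is a split injection of free $\mathbb{Z}$-modules. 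In fact your argument proves more than is stated: you get a full isomorphism $H^{g-1,j}(G_A(D))\cong Kh^{g-1-c_-,q}(D)$, not merely an isomorphism of torsion, and this stronger conclusion is corroborated by the paper's own example (compare Figures \ref{figure:Kh12n888} and \ref{figure:chromaticexample} at $i=2$ with $g=3$); the delicate torsion-only comparison in \cite{HPR:Torsion} really concerns the next grading out, where the free parts first diverge. The only blemish is notational: you use $n$ both for the number of crossings (the hypercube dimension) and, in $q=n-2j+c_+-2c_-$, for the number of vertices of $G_A(D)$ as in the theorem statement; only the latter is correct there, via $q=(|D(I)|-2j)+h(I)+c_+-2c_-$ with $|D(I)|=n-h(I)$.
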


Theorem \ref{theorem:ChromaticKhovanov} implies that the Khovanov homology of $\overline{12n_{888}}$ and the chromatic homology of four triangles glued at vertex are isomorphic in the first two homological gradings and have isomorphic torsion in the third homological grading. See Figures \ref{figure:Kh12n888} and \ref{figure:chromaticexample}. Theorem \ref{theorem:ChromaticKhovanov} has an immediate corollary. 

\begin{corollary}
\label{corollary:SameKh}
Let $D$ and $D'$ be link diagrams such that $G_A(D) = G_A(D')$. Suppose that the number of positive and negative crossings in $D$ and $D'$ are $c_{\pm}(D)$ and $c_{\pm}(D')$. Let $g=\girth(G_A(D))$. There is an isomorphism of Khovanov homology
$$Kh^{i,j}(D) \cong Kh^{p,q}(D'),$$
for $-c_-(D)\leq i < -c_-(D) + g - 1$ and all $j$ where $p-c_-(D')= i- c_-(D)$ and $q +c_+(D') - 2c_-(D') = j+c_+(D) - 2c_-(D).$
\end{corollary}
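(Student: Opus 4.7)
The plan is to compose two applications of Theorem \ref{theorem:ChromaticKhovanov}, one for $D$ and one for $D'$, using the common graph $G := G_A(D) = G_A(D')$ as an intermediary. Since both Khovanov homologies are identified in their first $g-1$ homological gradings with the chromatic homology of the same graph $G$, composing the isomorphism for $D$ with the inverse of the isomorphism for $D'$ yields an isomorphism between the corresponding portions of $Kh(D)$ and $Kh(D')$. No new ideas beyond Theorem \ref{theorem:ChromaticKhovanov} are required; the work is entirely in matching up the grading shifts.

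Explicitly, let $n$ denote the number of vertices of $G$ and let $(a,b)$ denote a chromatic bigrading. For $0 \leq a < g-1$ and any $b \in \mathbb{Z}$, Theorem \ref{theorem:ChromaticKhovanov} supplies isomorphisms
$$Kh^{a-c_-(D),\,n-2b+c_+(D)-2c_-(D)}(D) \;\cong\; H^{a,b}(G) \;\cong\; Kh^{a-c_-(D'),\,n-2b+c_+(D')-2c_-(D')}(D').$$
Composing these produces an isomorphism $Kh^{i,j}(D) \cong Kh^{p,q}(D')$ in which $i = a-c_-(D)$ and $p = a-c_-(D')$ share the common chromatic value $a$, and the polynomial gradings $j$ and $q$ share the common chromatic value $b$. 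Equating $i + c_-(D) = a = p + c_-(D')$ and $j + c_+(D) - 2c_-(D) = n - 2b = q + c_+(D') - 2c_-(D')$ produces the grading identifications in the statement. The range $-c_-(D) \leq i < -c_-(D) + g - 1$ is exactly the translate of $0 \leq a < g-1$ by $-c_-(D)$, and the corresponding range for $p$ is its translate by $-c_-(D')$.

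The one thing worth flagging is that $\girth(G_A(D)) = \girth(G_A(D'))$ is used implicitly to guarantee that Theorem \ref{theorem:ChromaticKhovanov} applies over the same range of chromatic gradings in both instances; this is automatic since the two graphs coincide. Beyond that the argument is routine bookkeeping, so the proof is essentially a one-line composition.
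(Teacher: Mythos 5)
Your proof is correct and is exactly the argument the paper intends: the corollary is stated there as an immediate consequence of Theorem \ref{theorem:ChromaticKhovanov}, obtained by composing the isomorphism $Kh(D)\cong H(G_A(D))$ with the inverse of $Kh(D')\cong H(G_A(D'))$ and translating the grading shifts, just as you do. The only quibble is a sign slip in your intermediate equation ($j = n-2b+c_+(D)-2c_-(D)$ gives $j - c_+(D)+2c_-(D) = n-2b$, not $j + c_+(D)-2c_-(D) = n-2b$), but this is pure bookkeeping and does not affect the validity of the composition argument.
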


Since the connected sum of four trefoils $3_1\# 3_1 \# 3_1 \# 3_1$ and the mirror of $12n_{888}$ have the same all-$A$ state graph of four triangles glued at a vertex, it follows that their Khovanov homologies agree in the first two nontrivial homological gradings. See Figures \ref{figure:4trefoils} and \ref{figure:KhTref}.
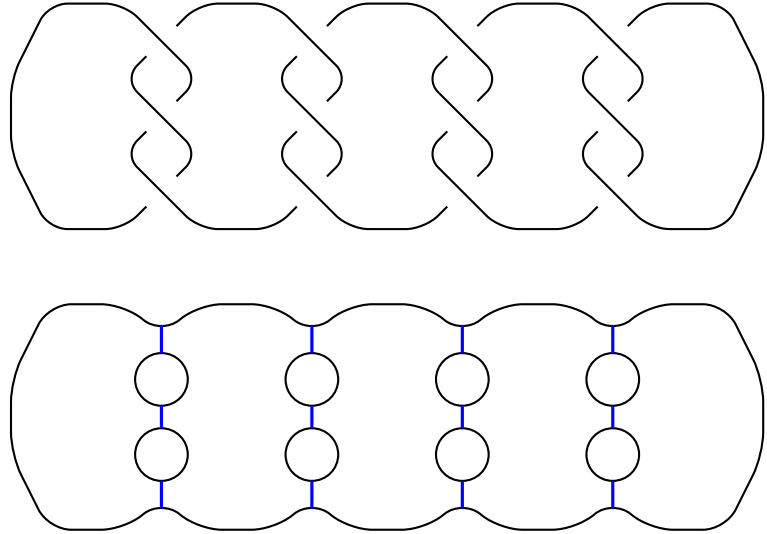
\begin{figure}[h]
$$\begin{tikzpicture}[thick, rounded corners = 2.5mm]

\draw (1.3,.3) -- (1,0) -- (0,0) -- (-.5,1) -- (-.5,2) -- (0,3) -- (1,3) -- (2,2) -- (1.7,1.7);
\draw (1.3,1.3) -- (1,1) -- (2,0) -- (3,0) -- (3.3,.3);
\draw (1.3,2.3) -- (1,2) -- (2,1) -- (1.7,.7);
\draw (1.7,2.7) -- (2,3) -- (3,3) -- (4,2) -- (3.7,1.7);

\begin{scope}[xshift = 2cm]
\draw (1.3,1.3) -- (1,1) -- (2,0) -- (3,0) -- (3.3,.3);
\draw (1.3,2.3) -- (1,2) -- (2,1) -- (1.7,.7);
\draw (1.7,2.7) -- (2,3) -- (3,3) -- (4,2) -- (3.7,1.7);
\end{scope}

\begin{scope}[xshift = 4cm]
\draw (1.3,1.3) -- (1,1) -- (2,0) -- (3,0) -- (3.3,.3);
\draw (1.3,2.3) -- (1,2) -- (2,1) -- (1.7,.7);
\draw (1.7,2.7) -- (2,3) -- (3,3) -- (4,2) -- (3.7,1.7);
\end{scope}

\begin{scope}[xshift = 6cm]
%\draw (1.3,1.3) -- (1,1) -- (2,0) -- (3,0) -- (3.3,.3);
\draw (1.3,2.3) -- (1,2) -- (2,1) -- (1.7,.7);
\draw (1.7,2.7) -- (2,3) -- (3,3) -- (3.5,2) -- (3.5,1) -- (3,0) -- (2,0) -- (1,1) -- (1.3,1.3); 
\end{scope}

\begin{scope}[yshift = -4cm]

\draw[rounded corners = 3mm] (.5,0) -- (0,0) -- (-.5,1) -- (-.5,2) -- (0,3) -- (1,3) -- (1.5,2.6) -- (2,3) -- (3,3) -- (3.5,2.6) -- (4,3) -- (5,3) -- (5.5,2.6) -- (6,3) --  (7,3) -- (7.5,2.6) -- (8,3) -- (9,3) -- (9.5,2) -- (9.5,1) -- (9,0) -- (8,0) -- (7.5,.4) -- (7,0) -- (6,0) -- (5.5,.4) -- (5,0) -- (4,0) -- (3.5,.4) -- (3,0) -- (2,0) -- (1.5,.4) -- (1,0) -- (.5,0);

\draw (1.5,2) circle (.35cm);
\draw (1.5,1) circle (.35cm);
\draw (3.5,2) circle (.35cm);
\draw (3.5,1) circle (.35cm);
\draw (5.5,2) circle (.35cm);
\draw (5.5,1) circle (.35cm);
\draw (7.5,2) circle (.35cm);
\draw (7.5,1) circle (.35cm);

\begin{scope}[very thick, blue]

\draw (1.5,2.72) -- (1.5,2.35);
\draw (1.5,1.65) -- (1.5,1.35);
\draw (1.5,.65) -- (1.5,.28);

\end{scope}

\begin{scope}[very thick, blue, xshift = 2cm]

\draw (1.5,2.72) -- (1.5,2.35);
\draw (1.5,1.65) -- (1.5,1.35);
\draw (1.5,.65) -- (1.5,.28);

\end{scope}

\begin{scope}[very thick, blue, xshift = 4cm]

\draw (1.5,2.72) -- (1.5,2.35);
\draw (1.5,1.65) -- (1.5,1.35);
\draw (1.5,.65) -- (1.5,.28);

\end{scope}

\begin{scope}[very thick, blue, xshift = 6cm]

\draw (1.5,2.72) -- (1.5,2.35);
\draw (1.5,1.65) -- (1.5,1.35);
\draw (1.5,.65) -- (1.5,.28);

\end{scope}

\end{scope}

\end{tikzpicture}$$
\caption{The connected sum of four left-handed trefoils and its all-$A$ state. The all-$A$ state graph of this diagram is four triangles glued at a vertex.}
\label{figure:4trefoils}
\end{figure}

\begin{figure}[h]
\begin{tabular}{|| c || c | c | c | c | c | c | c | c | c | c | c | c | c ||}
\hline
\hline
$j\backslash i$ & -12 & -11 & -10  & -9 & -8 & -7 & -6 & -5 & -4 & -3 & -2 & -1 & 0 \\
 \hline
 \hline
 -7 & & & & & & & & & & & & & 1\\
  \hline
-9 & & & & & & & & & & & & & 1 \\
 \hline
 -11 & & & & & & & & & & & 4& & \\
 \hline
 -13 & & & & & & & & & & & $4_2$& & \\
 \hline
  -15  & & & & & & & & & 6 & 4& & & \\
 \hline
   -17  & & & & & & & &  6 & $6_2$ & & & &  \\
 \hline
    -19  & & & & & & &4 & 6,$6_2$ & & & & & \\
 \hline
    -21   & & & & & & 8 & 6,$4_2$ &  & & & & & \\
 \hline
   -23     & & & & & 5& 4,$8_2$ &  & & & & & & \\
 \hline
      -25   & & & & 3& 8,$5_2$ & & & & & & & & \\
 \hline
       -27   & & &3  & 5,$3_2$ &  & & & & & & & & \\
 \hline
       -29   & & 1 &3,$3_2$ & & & & & & & & &  &\\
 \hline
      -31      &  &3,$1_2$ & & & & & & & & & & & \\
 \hline
         -33    & 1 & & & & & & & & & & & & \\
 \hline
 \hline
\end{tabular}
\caption{The Khovanov homology of the connected sum of four left-handed trefoils.}
\label{figure:KhTref}
\end{figure}

\begin{proof}[Proof of Theorem \ref{theorem:KhTorsion}]
Let $D$ be a link diagram, and let $G_A(D)$ be its all-$A$ state graph. Theorem \ref{theorem:ChromaticKhovanov} implies that the torsion of $Kh^{i,j}(D)$ is isomorphic to the torsion of $H^{p,q}(G_A(D))$ when $-c_- \leq i \leq -c_- + g -1$. Theorem \ref{theorem:ChromaticTorsion} implies that $H(G)$ has only torsion of order two. Thus the Khovanov homology of $D$ also has only torsion of order two in the specified bigradings.
\end{proof}

Theorem \ref{theorem:KhTorsion} deals with the first few homological gradings in Khovanov homology. One can ask similar questions about the first polynomial grading in Khovanov homology. Khovanov \cite{Kh:Patterns} proves that if a link is $A$-adequate, i.e. $G_A(D)$ has no loops for some diagram $D$, then the Khovanov homology in the first polynomial grading is isomorphic to $\mathbb{Z}$. Gonz\'alez-Meneses, Manch\'on, and Silvero \cite{MMS:GeomExtremeKh} associate to each link diagram a simplicial complex whose homology is isomorphic to the Khovanov homology of the link in the first polynomial grading of the Khovanov complex of the diagram. The Khovanov homology in this grading can be trivial, and so we refer to it as a potential extreme polynomial grading. Przytycki and Silvero \cite{PS:ExtremeKh} conjecture that the Khovanov homology in its potential extreme polynomial grading is torsion-free and prove the result for several special cases.

\section{Odd Khovanov homology}
\label{section:oddKh}

In this final section, we prove Theorem \ref{theorem:OddTorsionFree}, an analog of Theorem \ref{theorem:KhTorsion} for odd Khovanov homology. The results of this section are independent from the rest of the paper, but are similar in spirit.

\subsection{Construction of $Kh_{\text{odd}}(L)$}

For each vertex $I\in\mathcal{V}(n)$ in the hypercube, we again let $D(I)$ denote the Kauffman state associated to the vertex $I$. Define $V(D(I))$ to be the free $R$-module generated by the variables $X_1^I,X_2^I,\dots,X_{|D(I)|}^I$. Let $\Lambda^*(V(D(I))) = \Lambda^0(V(D(I))\oplus \Lambda^1(V(D(I))\oplus\cdots\oplus \Lambda^{|D(I)|}(V(D(I))$ be the exterior algebra of $V(D(I))$. For $0\leq r \leq |D(I)|$, the summand $\Lambda^r (V(D(I)))$ is in bigrading $(0,|D(I)|-2r)$. Define $C_{\text{odd}}(D(I))=\Lambda^*(V(D(I)))[h(I)]\{h(I)\}$ and define 
$$CKh_{\text{odd}}(D) = \bigoplus_{I\in\mathcal{V}(n)} C_{\text{odd}}(D(I)) [-c_-]\{c_+-2c_-\}$$
 where $c_\pm$ is the number of positive or negative crossings in $D$ respectively. 

In order to define the differential in odd Khovanov homology, we introduce some additional structure on the Kauffman states of the link diagram $D$. Arbitrarily choose an orientation for each trace in the all-$A$ Kauffman state of $D$. The orientation on the traces of the all-$A$ state induce orientations on traces of all other states as follows. If the trace comes from an $A$-resolution, then it has the same orientation as it did in the all-$A$ resolution, and if the trace comes from a $B$-resolution, then the orientation is induced by rotating the oriented trace at the same crossing with an $A$-resolution $90^\circ$ clockwise. See Figure \ref{figure:resolution}.

Let $\xi$ be an edge from vertex $I$ to vertex $J$. Suppose that $|D(J)| = |D(I)|-1$. Then $C_{\text{odd}}(D(J))\cong C_{\text{odd}}(D(I))/(X_{i_1}^I-X_{i_2}^I)$ where $X_{i_1}^I$ and $X_{i_2}^I$ are the variables associated to the two components of $D(I)$ being merged together. Define $m_{\text{odd}}$ to be the composition $C_{\text{odd}}(D(I)) \to C_{\text{odd}}(D(I))/(X_{i_1}^I-X_{i_2}^I) \xrightarrow{\cong} C_{\text{odd}}(D(J))$, where the first map in the composition is the canonical projection.

Now suppose that $|D(J)|=|D(I)|+1$. Let $X_{i_1}^J$ and $X_{i_2}^J$ be the two generators of $C_{\text{odd}}(D(J))$ corresponding to the components of $D(J)$ that $\xi$ splits such that the trace points from $X_{i_1}^J$ to $X_{i_2}^J$. For each generator $X_k^I$ of $C_{\text{odd}}(D(I))$, define $\Delta_{\text{odd}}(X_k^I) = (X_{i_1}^J - X_{i_2}^J) \wedge X^J_{\eta(k)}$ where $\eta$ is the correspondence between components in $D(I)$ and $D(J)$. If the $k$-th circle is being split, then we can equivalently choose $\eta(k)$ to be $i_1$ or $i_2$. 

If one defines the edge maps $d_\xi$ to simply be $m_{\text{odd}}$ or $\Delta_{\text{odd}}$, then each square in $\{0,1\}^n$ either commutes, anti-commutes, or both. In order to construct a differential, we need each square to anti-commute. Ozsv\'ath, Rasmussen, and Szab\'o \cite{ORS:OddKh} prove that there exists a function $\epsilon\co \mathcal{E}(n)\to\{\pm 1\}$, called a {\em sign assignment}, such that if each edge map is multiplied by $\epsilon$, then every square in the hypercube anti-commutes, thus giving a differential. Moreover, they show that different sign assignments give isomorphic homologies. Define the edge map $d_\xi$ to be $\epsilon(\xi) m_{\text{odd}}$ if $|D(J)| = |D(I)|-1$ or $\epsilon(\xi) \Delta_{\text{odd}}$ if $|D(J)|=|D(I)|+1$, and define 
$d^i_{\text{odd}}\co CKh_{\text{odd}}^{i,*}(D)\to CKh_{\text{odd}}^{i+1,*}$ by $d^i_{\text{odd}} = \sum_{|\xi|=i}d_\xi$. Then $(CKh_{\text{odd}}(D),d_{\text{odd}})$ is a chain complex whose homology is the {\em odd Khovanov homology} $Kh_{\text{odd}}(D;R)$ with coefficients in $R$.

\subsection{Odd Khovanov results}

Corollary \ref{corollary:SameKh} states that if two diagrams $D$ and $D'$ have the same all-$A$ state graphs, then there is an isomorphism of their Khovanov homologies in certain gradings. The proof uses the fact that $Kh(D)$ and $Kh(D')$ are both isomorphic to the chromatic homology of their common all-$A$ state graph. As of yet, there is no odd version of chromatic homology, and so we take a different approach with the following theorem.
\begin{theorem}
\label{theorem:oddKhribbon}
Let $D$ and $D'$ be link diagrams whose all-$A$ state graphs are isomorphic and have girth $g$ of at least two.  Let $c_\pm(D)$ and $c_\pm(D')$ be the number of positive and negative crossings in $D$ and $D'$ respectively. Then $Kh_{\text{odd}}^{i,j}(D)\cong Kh_{\text{odd}}^{p,q}(D')$ for $-c_-(D)\leq i \leq -c_-(D) + g - 1$ and all $j$ where $p-c_-(D')= i- c_-(D)$ and $q +c_+(D') - 2c_-(D') = j+c_+(D) - 2c_-(D).$ 
\end{theorem}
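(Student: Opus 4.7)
The plan is to show that when $G_A(D)$ and $G_A(D')$ are isomorphic with girth $g\geq 2$, the truncation of $CKh_{\text{odd}}(D)$ to Kauffman states of height at most $g$ is, after an overall bigrading shift, isomorphic as a chain complex to the analogous truncation of $CKh_{\text{odd}}(D')$. Since $Kh_{\text{odd}}^{i,j}(D)$ for $-c_-(D)\leq i\leq -c_-(D)+g-1$ depends only on this truncation---its outgoing differential at height $g-1$ lands at height $g$, which is included---the claimed isomorphism of homologies will follow. This parallels the strategy behind Corollary~\ref{corollary:SameKh}, except that, since no chromatic-style theory for odd Khovanov homology is available, we work directly with the cube of resolutions.

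Fix a graph isomorphism $\phi\co G_A(D)\to G_A(D')$. Since the edge sets correspond, $D$ and $D'$ have the same number of crossings $n$, and $\phi$ identifies the hypercubes $\{0,1\}^n$. For any $I\in\mathcal{V}(n)$ with $h(I)\leq g$, the count $|D(I)|$ is determined by $G_A(D)$ alone: the $h(I)$ edges labeled by $I$ form a forest whenever $h(I)\leq g-1$ (since the girth is $g$), so every flip is a merge and $|D(I)|=|D(\vec{0})|-h(I)$; at height $g$, the flipped edges either still form a forest or they form a $g$-cycle, giving $|D(I)|=|D(\vec{0})|-g$ or $|D(\vec{0})|-g+2$ respectively. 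In every case $|D(I)|=|D'(I)|$, and $\phi$ induces a bigraded isomorphism $C_{\text{odd}}(D(I))\cong C_{\text{odd}}(D'(I))$.

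To compare the edge maps, choose arbitrary orientations of the traces in the all-$A$ states of $D$ and $D'$, and propagate them to every Kauffman state by the $90^\circ$ clockwise rotation rule. Since $m_{\text{odd}}$ is the quotient by $X_{i_1}-X_{i_2}$ and is symmetric in $i_1,i_2$, it is independent of the trace orientation; hence on every merge edge, $m_{\text{odd}}$ agrees between $D$ and $D'$ under the identification. On a split edge (which, among heights $\leq g$, arises only from closing a $g$-cycle at height $g-1\to g$), the formula for $\Delta_{\text{odd}}$ depends on the oriented trace, and its $D'$ version equals the $D$ version up to a sign $\tau(\xi)\in\{\pm 1\}$. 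Extending $\tau$ by $+1$ on non-split edges gives $d_{\xi,D'}=\tau(\xi)\,d_{\xi,D}$ under the identifications. A direct check on each 2-face of the hypercube---using $\tau(\xi)^2=1$---shows that if $\epsilon_D$ is a sign assignment making every square of $CKh_{\text{odd}}(D)$ anti-commute, then $\epsilon_{D'}=\epsilon_D\cdot\tau$ is a sign assignment making every square of height $\leq g$ in $CKh_{\text{odd}}(D')$ anti-commute. With this choice, the identifications of chain groups assemble into an isomorphism of truncated chain complexes; since different sign assignments yield isomorphic odd Khovanov homologies \cite{ORS:OddKh}, the particular choice of $\epsilon_{D'}$ is immaterial.

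Assembling these ingredients, the subcomplexes of $CKh_{\text{odd}}(D)$ and $CKh_{\text{odd}}(D')$ supported in heights $h(I)\leq g$ are isomorphic as bigraded chain complexes, up to the overall shift $[-c_-]\{c_+-2c_-\}$ which differs between $D$ and $D'$. Taking homology yields $Kh_{\text{odd}}^{i,j}(D)\cong Kh_{\text{odd}}^{p,q}(D')$ in the asserted range with the stated bigrading shift. The main technical hurdle is the treatment of $\Delta_{\text{odd}}$ at splits: one must verify that the identification of the two newly split components is consistent with the transferred trace orientations, and that the resulting sign discrepancy $\tau$ can be absorbed into a valid sign assignment on $D'$. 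This absorption reduces, square by square, to the 2-face computation above, so no further propagation beyond height $g$ is needed.
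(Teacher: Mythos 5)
Your overall strategy coincides with the paper's: identify the truncated cubes of resolutions of $D$ and $D'$ via the graph isomorphism, observe that the unsigned edge maps agree, and then reconcile the two sign assignments. Two differences in execution are worth flagging. First, you truncate at heights $h(I)\leq g$ and therefore must handle the split maps $\Delta_{\text{odd}}$ that appear when the $g$ selected edges close a $g$-cycle; the paper instead works with $\{0,1\}^{<g}$, where every edge is a merge and $\Delta_{\text{odd}}$ never occurs. Your choice is the more careful one for the top degree $i=-c_-(D)+g-1$, since $\ker d^{g-1}$ depends on the differential into height $g$; and your observations there are right: $m_{\text{odd}}$ is orientation-independent, the pair of newly split circles at a height-$g$ vertex is canonically determined (they are the two circles not matched to components of $G(I)$), and either matching of that pair together with either trace orientation changes $\Delta_{\text{odd}}$ only by a sign $\tau(\xi)$, which the $\tau^2=1$ computation absorbs into the sign assignment square by square. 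Second, the one loose step is your closing appeal to the independence of sign assignment from \cite{ORS:OddKh}: that result concerns sign assignments on the full cube, whereas you have produced only a partial assignment $\epsilon_D\cdot\tau$ on the truncated cube and must compare it with the restriction of whatever assignment was actually used to define $Kh_{\text{odd}}(D')$. The paper closes exactly this gap: any two assignments making every square of the truncated cube anti-commute have ratio a $\mathbb{Z}_2$-valued $1$-cocycle, and since the truncated cube with its squares filled in is contractible, that cocycle is the coboundary of a $0$-cochain $\eta\co\mathcal{V}(n)\to\{\pm 1\}$; rescaling each $C_{\text{odd}}(D'(I))$ by $\eta(I)$ then yields the isomorphism of truncated complexes. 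You should either run this cocycle--coboundary argument or check that your partial assignment extends to a genuine sign assignment on the whole cube before invoking the independence result.
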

\begin{proof}
Let $G$ be the common all-$A$ state graph of $D$ and $D'$. Suppose that $D$ has $n$ crossings, and let $I\in\mathcal{V}(n)$ be a vertex in the hypercube $\{0,1\}^n$ such that $h(I) < g$. There is a canonical one-to-one correspondence between the components of $D(I)$ and the components of $G(I)$, constructed as follows. The construction of $G$ gives a natural correspondence between the components of the all-$A$ resolution of $D$ and the vertices of $D$. Every edge with height less than $g$ corresponds to merging two components of the Kauffman state or spanning subgraph. Such an edge in the hypercube merges two components of a Kauffman state if and only if adding the corresponding edge to the spanning subgraph of $G$ merges the corresponding components of the subgraph. There is an analogous one-to-one correspondence between the components of $D'(I)$ and the components of $G(I)$. Let $\phi$ be the induced one-to-one correspondence between the components of $D(I)$ and $D'(I)$. Hence one may consider the modules $V(D(I))$ and $V(D'(I))$ both as being free $R$-modules generated by $X_1^I, X_2^I,\dots, X_{|D(I)|}^I$, and therefore the $R$-modules $C_{\text{odd}}(D(I))$ and $C_{\text{odd}}(D'(I))$ are the same. 

Suppose that $G$ has $s$ vertices, and let $\gamma_1,\dots, \gamma_s$ be the components of the all-$A$ state $s_A(D)$ of $D$. Then $\phi(\gamma_1),\dots, \phi(\gamma_s)$ are the components of the all-$A$ state $s_A(D')$ of $D'$. Choose orientations on the traces of $s_A(D)$ and $s_A(D')$ so that if there is a trace between $\gamma_{i_1}$ and $\gamma_{i_2}$ (or $\phi(\gamma_{i_1})$ and $\phi(\gamma_{i_2})$) where $i_1 < i_2$, then the trace points from $\gamma_{i_1}$ to $\gamma_{i_2}$ (or from $\phi(\gamma_{i_1})$ and $\phi(\gamma_{i_2})$). 

If $\xi$ is an edge in $\{0,1\}^n$ with $|\xi| < g$ that merges two components $\gamma_{i_1}$ and $\gamma_{i_2}$ of $D(I)$, then $\xi$ also merges $\phi(\gamma_{i_1})$ and $\phi(\gamma_{i_2})$ of $D'(I)$. Since the traces are all oriented the same in both complexes, the edge maps $d_\xi\co C_{\text{odd}}(D(I))\to C_{\text{odd}}(D(J))$ and $d'_\xi\co  C_{\text{odd}}(D'(I))\to C_{\text{odd}}(D'(J))$ are the same if one ignores sign assignments. However, the sign assignments on the respective cubes may cause $d_\xi = \pm d'_\xi$.

Let $\epsilon$ and $\epsilon'$ be sign assignments for $CKh_{\text{odd}}(D)$ and $CKh_{\text{odd}}(D')$ respectively. Since every edge with $|\xi| < g$ corresponds to a multiplication, it follows that every square containing only such edges has edge maps that commute. Thus both $\epsilon$ and $\epsilon'$ assign an odd number of negative signs to each square in the hypercube with $|\xi| < g$. Let $\{0,1\}^{<g}$ be the hypercube with all edges and vertices of height $g$ or greater removed. 

If a disk is attached to each square of $\{0,1\}^{<g}$, then the resulting space is a disk. Consider the sign assignments as $1$-cochains in $\operatorname{Hom}(C_1,\mathbb{Z}_2)$ where $C_1$ is the space of $1$-chains on $\{0,1\}^{<g}$. Since both $\epsilon$ and $\epsilon'$ assign $-1$ to an odd number of edges around each square, it follows that $\epsilon\cdot\epsilon^\prime$ is a $1$-cocycle. Because the disk is contractible, the product of the edge assignments $\epsilon\cdot\epsilon^\prime$ is the coboundary of a $0$-cochain, that is there exists $\eta\co \mathcal{V}(n)\to\{\pm 1\}$ such that $\eta(I)\eta(J)=\epsilon(\xi)\epsilon^\prime(\xi)$ if $\xi$ is an edge between vertices $I$ and $J$. Therefore, $\epsilon(\xi)=\epsilon'(\xi)$ if and only if $\eta(I)=\eta(J)$. 

Define $\psi\co CKh_{\text{odd}}^{<g,*}(D)\to CKh_{\text{odd}}^{<g,*}(D')$ to be the map on complexes induced by the map $\eta(I)\cdot\operatorname{Id}\co C_{\text{odd}}(D(I))\to C_{\text{odd}}(D'(I))$ for each $I$ with $h(I) < g$. Then $\psi$ is an isomorphism of chain complexes and the result follows.

\end{proof}

\begin{proof}[Proof of Theorem \ref{theorem:OddTorsionFree}]
Since $G$ is planar, there exists an alternating diagram $D'$ whose all-$A$ state graph is $G$. Since $D'$ is alternating, its odd Khovanov homology $Kh_{\text{odd}}(D')$ is torsion-free by \cite{ORS:OddKh}. By Theorem \ref{theorem:oddKhribbon}, the odd Khovanov homology of $D$ is isomorphic to the odd Khovanov homology of $D'$ in the specified bigradings.
\end{proof}

\bibliography{torsionbib}{}
\bibliographystyle {amsalpha}

\end{document}